\setlist[itemize]{itemsep=0pt,parsep=2pt,topsep=2pt}
\setlist[enumerate]{itemsep=0pt,parsep=2pt,topsep=2pt}
\newtheorem{theorem}{Theorem}[section]
\newtheorem{lem}[theorem]{Lemma}%[section]
\newtheorem{prop}[theorem]{Proposition}%[section]
\newtheorem{assumption}[theorem]{Assumption}%[section]
\theoremstyle{remark}
\newtheorem{rem}{Remark}[section]
\theoremstyle{definition}
\newtheorem{example}{Example}[section]
\newtheorem{definition}{Definition}[section]
\numberwithin{equation}{section}
 \def\argmin{\mathop{\arg\min}}
 \def\A{\mathbb{A}}
 \def\X{\mathbb{X}}
 \def\E{\mathbb{E}}
 \def\R{\mathbb{R}}
 \def\Pr{\mathbb{P}}
 \def\cE{\mathcal{E}}
 \def\F{\mathcal{F}}
 \def\P{\mathcal{P}}
 \def\B{\mathcal{B}}
 \def\M{\mathcal{M}}
 \def\U{\mathcal{U}}
 \def\S{\mathcal{S}}
 \def\Z{\mathcal{Z}}
 \def\mrho{\kappa} %{\rho}
 \def\ugam{\underline{\gamma}}
 \def\nmid{\,|\,}
\def\leb{\lambda_{\text{\tiny Leb}}}
 \def\0{\mathbf{0}}
 \def\1{\mathbf{1}}
 \def\ind{\mathbb{1}}
 \def\mdp{$\text{AC}^+$}
 \def\mdn{$\text{AC}^-$}
 \def\ptmdp{$\widetilde{\text{AC}}^+$}
 \def\ptmdn{$\widetilde{\text{AC}}^-$}
 \def\tJ{{\tilde J}}
 \def\tg{{\tilde g}}
\def\old#1{{}}
\begin{document} 
\markboth{Average-Cost Borel-Space MDPs}{Average-Cost Borel-Space MDPs}

\title{Average-Cost Optimality Results for Borel-Space Markov Decision Processes with Universally Measurable Policies\thanks{This research was supported by grants from DeepMind, Alberta Machine Intelligence Institute (AMII), and Alberta Innovates---Technology Futures (AITF).}}

\author{Huizhen Yu\thanks{RLAI Lab, Department of Computing Science, University of Alberta, Canada (\texttt{janey.hzyu@gmail.com})}}
\date{}

\maketitle

\begin{abstract}
We consider discrete-time Markov Decision Processes with Borel state and action spaces and universally measurable policies.
For several long-run average cost criteria, we establish the following optimality results: the optimal average cost functions are lower semianalytic, there exist universally measurable semi-Markov or history-dependent $\epsilon$-optimal policies, and similar results hold for the minimum average costs achievable by Markov or stationary policies. 
We then analyze the structure of the optimal average cost functions, proving sufficient conditions for them to be constant almost everywhere with respect to certain $\sigma$-finite measures. 
The most important condition here is that each subset of states with positive measure be reachable with probability one under some policy. 
We obtain our results by exploiting an inequality for the optimal average cost functions and its connection with submartingales, and, in a special case that involves stationary policies, also by using the theory of recurrent Markov chains.
\end{abstract}

\bigskip
\bigskip
\bigskip
\noindent{\bf Keywords:}\\
Markov decision processes; Borel spaces; universally measurable policies; average cost;\\
submartingales; reachability; recurrent Markov chains

\clearpage
\tableofcontents

\clearpage
\section{Introduction}

We study discrete-time Markov Decision Processes (MDPs) on Borel state and action spaces under long-run expected average cost criteria. 
In particular, we consider Borel-space MDPs as formulated in \cite{ShrB78,ShrB79} and \cite[Part II]{bs}, where the control constraints have analytic graphs, the one-stage cost functions are lower semianalytic, and the policies are universally measurable. 
This mathematical framework is a result of a series of research efforts, starting with the early work \cite{Blk-borel,BFO74,Str-negative}, to resolve measurability issues in Borel-space MDPs, and is applicable to modeling a broad range of control systems with complex dynamics and objective functions. The purpose of this work is to investigate the average-cost optimality properties of MDPs in this general framework.

In contrast to finite-space MDPs, the theory of infinite-space MDPs is incomplete, with the average cost problems being especially hard to analyze.
While there are extensive studies on average-cost Borel-space MDPs that satisfy various continuity/compactness conditions (cf.\ \cite{ArB19,CoD12,FKL20,FKZ12,HL96,HL99,HLV98,JaN06,Mey97,Sch92,VAm18} and the references therein), for more general MDP models, there are not many published results. Of these, the majority are applicable only in the case where the optimal average cost function is constant on the state space (see e.g., \cite{GuS75,Jas09,Kur86,Yu20}). To the best of our knowledge, the only available method for non-constant optimal average cost functions is the ``canonical triplet'' approach (\cite[Chap.\ 7.9]{DyY79}, \cite{Piu89}; see also \cite[Chap.~5.2]{HL96}). 
A canonical triplet consists of two functions on the state space and a stationary policy that together satisfy certain constraints. 
Such a triplet, if it exists, provides the optimal average cost function, a solution to a pair of average-cost optimality equations (ACOEs), as well as a stationary ($\epsilon$-)optimal policy for an MDP. 
On the other hand, even for a countable-space MDP, the ACOE need not admit a solution \cite{CCa91} and a stationary ($\epsilon$-)optimal policy need not exist \cite{Fei80,FiR68}. 
In summary, although for particular classes of MDPs, the average cost problems have been studied in depth,
general average-cost MDPs are challenging to analyze and their optimality properties are not fully understood.

In this work, we examine the structural properties of the optimal average cost functions and the ($\epsilon$-)optimal policies, starting from basic properties such as measurability. 
We are not concerned with the validity of optimality conditions of the dynamic-programming (DP) type, such as ACOEs, which, as noted above, may not hold in general. Instead, by approaching the average cost problems in a ``bottom-up'' way, 
we seek to find structures that can help shed light on the nature of these problems and enhance the existing theory.

The main contributions of this paper are twofold.
First, we establish basic optimality results for several average cost criteria, including the standard average cost criterion, a criterion that is similar to but stronger than the standard one, and some criteria that are based on average costs along sample paths (see  (\ref{eq-crt1})-(\ref{eq-crt3}) and (\ref{eq-pt-ac1})-(\ref{eq-pt-ac2a})).
In particular, under general conditions that ensure the average cost functions of all policies are well defined, we prove that the optimal average cost functions are lower semianalytic and that there exist universally measurable, randomized semi-Markov or history-dependent $\epsilon$-optimal policies. In addition, we also analyze the minimum average costs achievable by Markov or stationary policies and prove similar optimality results. (See Theorems~\ref{thm-strat-m}-\ref{thm-ac-basic2} in Section~\ref{sec-basic-opt}.) Our analyses of these basic optimality properties can also be applied to risk-sensitive MDPs with universally measurable policies. (We compare our results with the related prior work in Remark~\ref{rmk-basicthm-compare}, and we provide a detailed discussion of the proof techniques in Remark~\ref{rmk-basic-thm-prf}.)

Second, we prove sufficient conditions for the optimal average cost functions to be constant almost everywhere on certain subsets of states, with respect to certain $\sigma$-finite measures
(see Theorems~\ref{thm-ac-const}-\ref{thm-ac-const2} and Prop.~\ref{prp-ac-const-mc} in Section~\ref{sec-ae-const}).
The key condition we introduce here is, roughly speaking, that each subset of states with positive measure be reachable with probability one under some policy (see condition (i) of Theorem~\ref{thm-ac-const}). 
When specialized to the case of finite state and action spaces, this condition is closely related to the definition of a connected class \cite{Pla77} which is the basis for defining weakly communicating and multichain MDPs on finite spaces. However, for infinite spaces, this condition differs in essential aspects from a connected class (see Remark~\ref{rmk-conn-cls} for details). 
To prove our results, we use submartingale-based arguments. We exploit the fact that under suitable boundedness conditions on the one-stage costs, an inequality satisfied by the optimal average cost functions implies that the values of an optimal average cost function at the states visited under any policy form a submartingale sequence. 

Furthermore, we study an important special case of the key condition mentioned above, where the policies involved are stationary. We show how the theory of $\psi$-irreducible recurrent Markov chains can be used in this case to relate this condition to a recurrence condition and to help characterize the structure of the optimal average cost functions (see Lemma~\ref{lem-ac-cond-mc} and Remarks~\ref{rmk-mc-related}-\ref{rmk-mc-related2} in Section~\ref{sec-ae-const}). Compared with many prior studies on average-cost MDPs (e.g., \cite{ArB19,HL99,HMC91,HLV98,Mey97}), the recurrence condition employed in our result is much weaker: for example, positive recurrence is not required (see Section~\ref{sec-3.2.3} for a further discussion).

The rest of the paper is organized as follows. In Section~\ref{sec-2} we describe the mathematical framework for Borel-space MDPs and several types of average cost criteria.
We then present our results and illustrative examples in Section~\ref{sec-results} and give the proofs in Sections~\ref{sec-proofs-basic} and~\ref{sec-5}. A brief review of standard terminology for Markov chains is included in Appendix~\ref{appsec-mc}.

\section{Preliminaries} \label{sec-2}

We begin with the definitions of certain sets/functions that underly the Borel-space MDP framework. 

\subsection{Some Notation and Definitions} \label{sec-2.1}
Let $X$ be a separable metrizable space homeomorphic to a Borel subset of some Polish space; such a space is called a \emph{Borel space} (or \emph{standard Borel space}) \cite[Def.\ 7.7]{bs}. 
Let $\B(X)$ denote the Borel $\sigma$-algebra on $\X$. 
A probability measure on $\B(\X)$ will be called a \emph{Borel probability measure}. 
The set of all such measures is denoted by $\P(\X)$; endowed with the topology of weak convergence, $\P(X)$ is also a Borel space~\cite[Chap.\ 7.4]{bs}. For each $p \in \P(X)$, the \emph{completion of $p$} is the unique extension of $p$ on the $\sigma$-algebra $\B_p(X)$ generated by $\B(X)$ and all the subsets of $X$ with $p$-outer measure $0$, and it coincides with the outer measure of $p$ on $\B_p(X)$ (cf.\ \cite[Chap.\ 3.3]{Dud02}). Notation-wise, we do not distinguish between $p$ and its completion, except when their difference matters in an analysis.
The \emph{universal $\sigma$-algebra} on $X$ is given by $\U(X) : = \cap_{p \in \P(X)} \B_p(X)$ and thus contains $\B(\X)$.
The sets in $\U(X)$ and the $\U(X)$-measurable mappings on $X$ are called \emph{universally measurable}---they are
measurable with respect to (w.r.t.)~the completion of any $p \in \P(X)$. 

A subset of $X$ is called \emph{analytic}, if it is either the empty set or the image of a Borel subset of some Polish space under a Borel measurable mapping (cf.\ \cite[Prop.\ 7.41]{bs}, \cite[Chap.~13.2]{Dud02}). A \emph{lower semianalytic} function is a function $f: D \to [-\infty, \infty]$ such that the domain $D$ is an analytic set and for every $r \in \R$, the level set $\{ x \in D \!\mid f(x) \leq r\}$ is analytic \cite[Def.\ 7.21]{bs}. 
All Borel subsets of $X$ are analytic, and all Borel measurable extended real-valued functions on $X$ are lower semianalytic, whereas all analytic subsets of $X$ and lower semianalytic functions on $X$ are universally measurable.
Analytic sets and lower semianalytic functions play foundational roles in the mathematical framework for Borel-space MDPs (cf.\ the article \cite{BFO74} and the monograph \cite[Chap.\ 7]{bs}). A brief review of their properties will be given later in Section~\ref{sec-proof-review}.

If $X$ and $Y$ are Borel spaces, a function $q(\cdot \,|\, \cdot): \B(Y) \times X \to [0,1]$ is called a \emph{universally measurable stochastic kernel} (resp.\ \emph{Borel measurable stochastic kernel}) on $Y$ given $X$, if for each $x \in X$, $q(\cdot \,|\,x)$ is a probability measure on $\B(Y)$ and for each $B \in \B(Y)$, $q(B \,|\, \cdot)$ is universally measurable (resp.\ Borel measurable). This definition is equivalent to that the mapping $x \mapsto q(\cdot \,|\, x)$ is measurable from $\big(X, \U(X)\big)$ (resp.\ $\big(X, \B(X)\big)$) into $\big(\P(Y), \B(\P(Y))\big)$; cf.\ \cite[Def.~7.12, Prop.~7.26, Lem.~7.28]{bs}. To refer to the stochastic kernel, we will often use the notation $q$ or $q(dy \,|\, x)$. % rev.

Through out the paper, for summations involving extended real numbers, we adopt the convention $+ \infty - \infty = - \infty + \infty = + \infty$ for technical convenience. (Later our conditions on the MDP model will preclude such summations from occurring in our results.) If $p \in \P(X)$ and $f: X \to [-\infty, + \infty]$ is universally measurable, we define $\int f dp : = \int f^+ dp - \int f^- dp$, where $f^+$ ($f^-$) is the positive (negative) part of $f$ and the integration is w.r.t.\ the completion of $p$. 
For $x \in X$ and $B \subset X$, we denote by $\delta_x$ the Dirac measure concentrating at $x$, by $B^c$ the set $X \setminus B$, and by $\ind_B$ the indicator function for the set $B$. For an event $E$ in a probability space, we write $\ind(E)$ for the indicator of $E$.

\subsection{Borel-Space MDPs with Average Cost Criteria} \label{sec-2.2}

We consider a Borel-space MDP in the universal measurability framework (cf.\ \cite[Chap.\ 8.1]{bs}). Specifically, we assume the following:
\begin{itemize}[leftmargin=0.65cm,labelwidth=!]
\item The state space $\X$ and the action space $\A$ are \emph{Borel spaces}.
\item The control constraint is specified by a set-valued map $A: x \mapsto A(x)$ on $\X$, where $A(x) \subset \A$ is a nonempty set of admissible actions at the state $x$, and the graph of $A(\cdot)$,
$\Gamma := \{(x, a) \mid x \in \X, a \in A(x)\},$
is \emph{analytic}.
\item The one-stage cost function $c: \Gamma \to [-\infty, +\infty]$ is \emph{lower semianalytic}.
\item State transitions are governed by $q(dy \mid x, a)$, a \emph{Borel measurable} stochastic kernel on $\X$ given $\X \times \A$.
\end{itemize}
The control problem has an infinite horizon. 
For $n \geq 0$, let $h_n : = (x_0, a_0, x_1, a_1,\ldots, x_n)$, where $x_n$ and $a_n$ denote the state and action, respectively, at the $n$th stage, and let $\omega : = (x_0, a_0, x_1, a_1, \ldots)$. 
We denote the space of $h_n$ by $H_n : = (\X \times \A)^n \times \X$ and the space of $\omega$ by $\Omega: = (\X \times \A)^\infty$; both spaces are endowed with the product topology so that they are Borel spaces \cite[Prop.\ 7.13]{bs}. 

By a \emph{universally measurable policy} (or a \emph{policy} for short), we mean a sequence of \emph{universally measurable} stochastic kernels, $\pi : =(\mu_0, \mu_1, \ldots)$, where for each $n \geq 0$,
$\mu_n\big(da_n \,|\, h_n \big)$ is a universally measurable stochastic kernel on $\A$ given $H_n$ such that
\begin{equation}  \label{eq-control-constraint}
   \mu_n\big(A(x_n) \mid h_n \big) = 1, \qquad \forall \, h_n = (x_0, a_0, \ldots, a_{n-1}, x_n) \in H_n.
\end{equation}   
Here, for each $h_n$, the set $A(x_n)$ is analytic since it is a section of the analytic set $\Gamma$ (cf.\ \cite[Prop.~7.40]{bs}), and the probability of $A(x_n)$ is measured w.r.t.\ the completion of $\mu_n(da_n \,|\, h_n)$.
If for every $n \geq 0$ and every $h_n \in H_n$, $\mu_n(da_n \,|\, h_n)$ is a Dirac measure, $\pi$ is called a \emph{nonrandomized} policy. A general $\pi \in \Pi$ will sometimes be called randomized or history-dependent, in order to contrast it with nonrandomized policies or policies with more structures.

A \emph{Markov} (resp.\ \emph{semi-Markov}) policy is a policy such that for every $n \geq 0$, as a function of $h_n$, the probability measure $\mu_n(d a_n \,|\, h_n)$ depends only on $x_n$ (resp.\ $(x_0, x_n)$). If a Markov policy $\pi$ has identical stochastic kernels, i.e., $\pi = (\mu, \mu, \ldots)$, we call it a \emph{stationary} policy and write it simply as $\mu$. Likewise, if a semi-Markov policy $\pi$ satisfies that for some stochastic kernel $\tilde \mu$ on $\A$ given $\X^2$, $\mu_0(da \nmid x) = \tilde \mu(da \nmid x, x)$ for all $x \in \X$ and $\mu_n = \tilde \mu$ for all $n \geq 1$, we call $\pi$ a \emph{semi-stationary} policy.

Let $\Pi$ denote the set of all policies, and let $\Pi_m$ ($\Pi_s$) denote the subset of Markov (stationary) policies. 
Since the control constraint $A(\cdot)$ has an analytic graph $\Gamma$, a nonrandomized stationary policy exists by the Jankov-von Neumann selection theorem \cite[Prop.~7.49]{bs}, so $\Pi$, $\Pi_m$, and $\Pi_s$ are all nonempty. By contrast, a Borel measurable policy---a policy consisting of Borel measurable stochastic kernels $\{\mu_n\}$---may not exist \cite{Blk-borel}.

For each policy $\pi = (\mu_0, \mu_1, \ldots) \in \Pi$, an initial state distribution $p_0 \in \P(\X)$ together with the collection of stochastic kernels $\mu_0(d a_0 \nmid x_0)$, $q(dx_1 \nmid x_0, a_0)$, $\mu_1(da_1 \nmid h_1)$, $q(dx_2 \nmid x_1, a_1), \ldots$ determines uniquely a probability measure $\Pr^\pi_{p_0}$ on $\U(\Omega)$ \cite[Prop.\ 7.45]{bs}. If $p_0 = \delta_x$, we will also write $\Pr^\pi_{\delta_x}$ as $\Pr^\pi_x$ and the associated  expectation operator as $\E^\pi_x$. 
Throughout the paper, for notational simplicity, we will write the stochastic process on $\U(\Omega)$ induced by $\pi$ and $p_0$ as $\{(x_n, a_n)\}_{n \geq 0}$, using $(x_n, a_n)$ to denote the random variables $\big(x_n(\omega), a_n(\omega)\big)$ instead, which are the $(x_n, a_n)$-components of $\omega$. 

Let $c^+$ ($c^-$) be the positive (negative) part of the one-stage cost function $c$. 
We define two general classes of MDPs based on the finiteness of finite-stage costs w.r.t.\ $c^+$ or $c^-$. The definition will ensure that for any policy and initial state, the expected $n$-stage cost is well defined and does not involve $+\infty - \infty$. The average cost functions can then be defined properly.

\begin{definition} \label{def-ac-models}
We say an MDP is in the model class \mdp (\mdn), if for $c^\diamond = c^-$ ($c^\diamond = c^+$),
\begin{equation} \label{eq-ac-mdpn}
   \textstyle{ \E^\pi_x \big[ \sum_{k=0}^n c^\diamond(x_k, a_k) \big] < + \infty, \qquad \forall \, x \in \X, \, \pi \in \Pi, \, n \geq 0.}
\end{equation}
\end{definition}
\smallskip

Now consider MDPs in the \mdp or \mdn class. 
For $n \geq 1$, define two $n$-stage cost functions as follows: for $\pi \in \Pi, x \in \X$, and $j \geq 0$,
$$ J_n(\pi, x) : = \E^\pi_x \big[ \, \textstyle{\sum_{k=0}^{n-1} c(x_k, a_k)} \, \big], \qquad J_{n,j}(\pi, x) : = \E^\pi_x \big[ \, \textstyle{\sum_{k=0}^{n-1} c(x_{k+j}, a_{k+j})} \, \big].$$
($J_n$ is the standard $n$-stage cost function; $J_{n,j}$ instead measures the expected costs incurred from time $j$ to time $j+n-1$.)
We consider four different long-run average cost functions: for $\pi \in \Pi, x \in \X$, 
\begin{equation} \label{eq-crt1}
 J^{(1)}(\pi, x) : =   \limsup_{n \to \infty} n^{-1} J_n(\pi, x), \qquad J^{(2)}(\pi, x) : =   \liminf_{n \to \infty} n^{-1}  J_n(\pi, x),
\end{equation}
\begin{equation} \label{eq-crt3}
 J^{(3)}(\pi, x) : =   \limsup_{n \to \infty} \sup_{j \geq 0} n^{-1} J_{n,j}(\pi, x), \qquad J^{(4)}(\pi, x) : =   \liminf_{n \to \infty} \inf_{j \geq 0} n^{-1} J_{n,j}(\pi, x).
\end{equation} 
The optimal average cost functions corresponding to these criteria are given by 
$$ g^*_i(x) : = \inf_{\pi \in \Pi}  J^{(i)}(\pi, x), \qquad 1 \leq i \leq 4.$$
With respect to the $i$th criterion, a policy $\pi \in \Pi$ is called \emph{optimal for state $x$}, if $J^{(i)}(\pi, x) = g^*_i(x)$; 
and \emph{$\epsilon$-optimal for state $x$}, if $\epsilon > 0$ and 
\begin{equation} \label{def-pol-opt}
     J^{(i)}(\pi, x) \leq \begin{cases} 
                g^*_i(x) + \epsilon & \text{if} \ g^*_i(x) > - \infty, \\
                - \epsilon^{-1} & \text{if} \ g^*_i(x) = - \infty.
                \end{cases}   
\end{equation}                
A policy is called \emph{($\epsilon$-)optimal}, if it is ($\epsilon$-)optimal for \emph{all} states $x \in \X$.
            
As can be seen, the definitions of \mdp and \mdn ensure that the expected costs $J_n(\pi, x)$ and $J_{n,j}(\pi, x)$ are well defined and hence the average cost functions $J^{(i)}$ and $g^*_i$ are all well defined. All these functions are extended real-valued: the ranges of $J_n$ and $J_{n,j}$ are contained in $(- \infty, + \infty]$ in the case of \mdp and $[- \infty, + \infty)$ in the case of \mdn, whereas $J^{(i)}$ and $g^*_i$ can take both values $-\infty, + \infty$ in either case.
It can also be seen that $J^{(4)} \leq J^{(2)} \leq J^{(1)} \leq J^{(3)}$, so 
$g^*_4 \leq   g^*_2 \leq g^*_1 \leq g^*_3.$
The two limsup optimal average cost functions, $g^*_1$, $g^*_3$, will be the focus of our study later.

\smallskip
\begin{rem} \rm \label{rmk-ac-cri}
To our knowledge, the average cost criteria $J^{(3)}$ and $J^{(4)}$ have not been considered before in the MDP literature. They are related to the maximal and minimal values of Banach limits (a type of positive linear functional on $\ell_\infty$, the space of bounded sequences of real numbers endowed with the norm $\|\cdot\|_\infty$) \cite[Chap.~3.4]{Kre85}. If the one-stage cost function $c(\cdot)$ is bounded above or below, one actually has 
$$ J^{(3)}(\pi, x) =   \lim_{n \to \infty} \sup_{j \geq 0} n^{-1} J_{n,j}(\pi, x), \qquad J^{(4)}(\pi, x) =   \lim_{n \to \infty} \inf_{j \geq 0} n^{-1} J_{n,j}(\pi, x),$$
where the existence of the limits follows from essentially the same arguments given in the proof of \cite[Theorem 3.4.1]{Kre85}. 
The criterion $J^{(3)}$ seems a useful alternative to the commonly used criterion $J^{(1)}$, as it provides a stronger sense of optimality  
for average-cost MDPs.  
\qed
\end{rem}

\begin{example} \label{ex-uc}
If $c(\cdot)$ is bounded above (below), the MDP is in the class \mdn (\mdp).
Consider now some cases where $c^+, c^-$ are both unbounded. First, suppose that there exist a universally measurable function $w: \X \to [0, + \infty)$ and constants $b \geq 0$, $\beta \in [0,1)$ such that for all $x \in \X$,
\begin{equation} \label{eq-exuc1}
  \sup_{a \in A(x)} \int_\X w(y) \, q(dy \mid x, a) \leq \beta w(x) + b \qquad \text{and} \qquad \sup_{a \in A(x)} c^+(x, a) \leq w(x).
\end{equation}  
Let $\tilde b : = b/(1 - \beta)$, $\tilde w(x) : = w(x)/(1-\beta)$. 
Then for any $\pi \in \Pi$, $x \in \X$, and $k \geq 0$, we have $\E^\pi_x [ c^+(x_k, a_k) ] \leq \tilde b + \lambda^k w(x)$, 
so $ \E^\pi_x \big[ \sum_{k=0}^{n-1} c^+(x_k, a_k) \big] \leq n \tilde b + \tilde w(x) < + \infty$ for all $n \geq 1$ and the MDP is in the class \mdn. 
Moreover, in this case, $n^{-1} J_n(\pi, x) \leq \tilde b + n^{-1} \tilde w(x)$ and $n^{-1} J_{n,j}(\pi, x) \leq \tilde b + n^{-1} \lambda^j \tilde w(x)$ for all $n \geq 1$ and $j \geq 0$. 
So $g^*_i$, $1 \leq i \leq 4$, are bounded above by the constant $\tilde b$.
        
Similarly, if the conditions in (\ref{eq-exuc1}) hold with $c^-$ replacing $c^+$ in the second relation, then the MDP is in the class \mdp. 
Moreover, since in this case $n^{-1}J_n(\pi, x) \geq - \tilde b - n^{-1} \tilde w(x)$ and $n^{-1} J_{n,j}(\pi, x) \geq - \tilde b - n^{-1} \lambda^j \tilde w(x)$, the optimal cost functions $g^*_i$, $1 \leq i \leq 4$, are bounded below by the constant $-\tilde b$. 

A special case of the above is when (\ref{eq-exuc1}) holds with $w(\cdot) \geq 1$ and with the function $|c|(\cdot)$ in place of $c^+$ in the second relation. Then the MDP belongs to both \mdn and \mdp. This class of MDPs has been studied in the literature, under additional assumptions and via different approaches than the one we take in this paper; see e.g., \cite[Chap.~10]{HL99} and \cite{JaN06,VAm03,Yu20}.  \qed
\end{example}
\smallskip

If $c(\cdot)$ is bounded above or below, average costs along sample paths can also be defined. 

\begin{definition} \label{def-ptac-models}
We say an MDP is in the model class \ptmdp (\ptmdn), if the one-stage cost function $c$ is real-valued and bounded below (above) on $\Gamma$.
\end{definition}
\smallskip

For MDPs in \ptmdp and \ptmdn, the following average cost criteria are well defined: 
\begin{align}
   \tJ^{(1)}(\pi, x) & : = \E^\pi_x \left[ \limsup_{n \to \infty} \tilde c_n
   \right],  & \tJ^{(2)}(\pi, x) & : = \E^\pi_x \left[ \liminf_{n \to \infty} \tilde c_n  \right],    \label{eq-pt-ac1} \\
     \tJ^{(3)}(\pi, x)  & : = \E^\pi_x \left[ \lim_{n \to \infty} \sup_{j \geq 0} \tilde c_{n,j} \right],  &
       \tJ^{(4)}(\pi, x)  & : = \E^\pi_x \left[ \lim_{n \to \infty} \inf_{j \geq 0} \tilde c_{n,j} \right], 
      \label{eq-pt-ac2a}  
\end{align}
where 
$$ \textstyle{\tilde c_n : =  n^{-1} \sum_{k=0}^{n-1} c(x_k, a_k), \qquad \ \ \ \tilde c_{n,j} : = n^{-1} \sum_{k=0}^{n-1} c(x_{k+j}, a_{k+j}).}$$
The limits inside the expectations in (\ref{eq-pt-ac2a}) exist by the proof of \cite[Theorem 3.4.1]{Kre85}, as discussed in Remark~\ref{rmk-ac-cri}. 
Under these criteria, the optimal average cost functions $\tg^*_i$ and optimal policies are defined in the same way as in the case of the criteria $J^{(i)}$. In particular, 
$$ \tg^*_i(x) : = \inf_{\pi \in \Pi}  \tJ^{(i)}(\pi, x), \qquad x \in \X, \ 1 \leq i \leq 4,$$
and clearly, $\tg^*_4 \leq \tg^*_2 \leq \tg^*_1 \leq \tg^*_3$. 

For all the criteria introduced above, besides the optimal average cost functions, we will also consider the minimum average costs achievable by Markov policies in $\Pi_m$ or stationary policies in $\Pi_s$. Specifically, 
for $1 \leq i \leq 4$ and with $\diamond$ representing a symbol in $\{m, s\}$, define
\begin{align*}
  g^\diamond_i(x)  & : = \inf_{\pi \in \Pi_\diamond}  J^{(i)}(\pi, x),  
   &  \tg^\diamond_i(x) & : = \inf_{\pi \in \Pi_\diamond} \tJ^{(i)}(\pi, x),  \qquad x \in \X.   
\end{align*}   
We will refer to these functions as \emph{the optimal average cost functions w.r.t.\ $\Pi_\diamond$}. It is a fact that $g^m_i = g^*_i$ (cf.\ the proof of \cite[Prop.~1]{ShrB79}).
For $\epsilon \geq 0$, if a policy $\pi \in \Pi$ (not necessarily Markov or stationary) is such that for all $x \in \X$,
$$\tJ^{(i)}(\pi, x) \leq \begin{cases} 
                \tg^\diamond_i(x) + \epsilon & \text{if} \ \tg^\diamond_i(x) > - \infty, \\
                - \epsilon^{-1} & \text{if} \ \tg^\diamond_i(x) = - \infty,
                \end{cases}   
$$
we say $\pi$ \emph{attains $\tg^\diamond_i$ within $\epsilon$ accuracy}. Similarly, we say $\pi$ attains $g^s_i$ within $\epsilon$ accuracy if an inequality like the above holds for $J^{(i)}(\pi, \cdot)$ and $g^s_i$. In the case $\epsilon = 0$, we also say $\pi$ attains $\tg^\diamond_i$ or $g^\diamond_i$. If an inequality like the above holds for a particular state $x$ instead of all states, $\pi$ will be said to attain $\tg^\diamond_i(x)$ or $g^\diamond_i(x)$ within $\epsilon$ accuracy. 

\smallskip
\begin{rem} \rm 
For a finite-state MDP with an arbitrary action space and a bounded one-stage cost function $c(\cdot)$, these functions are identical: $g^*_i$, $g^s_i$, and $\tg^\star_i$, $\star \in \{\ast, m, s\}$, $i = 1, 2$ (cf.\ \cite{Bie87, Fei80} and the related work \cite[Chap.\ 7.13]{DyY79}). In general, they can be all different when the state space is infinite.
\qed
\end{rem}

\section{Main Results} \label{sec-results}

We first give several basic optimality results in Section~\ref{sec-basic-opt}. 
We then study, in Section~\ref{sec-ae-const}, the almost-everywhere constancy of the optimal average cost functions for the \mdn and \ptmdn models. The results of these two subsections will be proved in Sections~\ref{sec-proofs-basic} and~\ref{sec-5}, respectively.

In what follows, we will place the MDP class label(s) at the start of a theorem to indicate which class(es) of MDPs the theorem is concerned with. 

\subsection{Basic Optimality Results} \label{sec-basic-opt}

To study the average cost problems under the various criteria given in Section~\ref{sec-2.2}, 
we consider the probability measures induced on $\U(\Omega)$ by the policies (also known as \emph{strategic measures} in the literature \cite[Chap.~3.5]{DyY79}).
We shall work with their restrictions to $\B(\Omega)$, 
in order to make use of well-studied properties of Borel probability measures---since a probability measure on $\U(\Omega)$ is uniquely determined by its restriction to $\B(\Omega)$ (cf.\ Lemma~\ref{lem-umset} and Remark~\ref{rmk-lem-umset} about completion of measures), the two are effectively the same. Let
\begin{align*}
  \S  & : = \big\{ p \in \P(\Omega) \mid p = \text{restriction of $\Pr_{p_0}^\pi$ to $\B(\Omega)$}, \, \pi \in \Pi, \, p_0 \in \B(\X)\big\}, \\
\S^0 & : = \big\{ p \in \P(\Omega) \mid p = \text{restriction of $\Pr_{\delta_x}^\pi$ to $\B(\Omega)$}, \,  \pi \in \Pi, \, x \in \X\big\}.
\end{align*}
Define $\S_m, \S^0_m$ as above with $\Pi_m$ in place of $\Pi$, and define $\S_s, \S^0_s$ similarly with $\Pi_s$ in place of $\Pi$.
All these sets are subsets of the Borel space $\P(\Omega)$. 
The next theorem is our first result.

\begin{theorem} \label{thm-strat-m}
For $\S_\star \in \{\S, \S_m, \S_{s}\}$, $\S_\star$ and $\S_\star^0$ are analytic.
\end{theorem}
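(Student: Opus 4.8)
The plan is to characterize each set $\S_\star$ (and $\S_\star^0$) as the collection of $p \in \P(\Omega)$ satisfying a short list of conditions, and then to verify that each condition cuts out an analytic (or Borel) set, using that a finite intersection of analytic sets is analytic. For the general case $\S_\star = \S$, two conditions suffice: a \emph{consistency condition} forcing the one-step state transitions of $p$ to agree with $q$, namely that for every $n$ the $p$-conditional law of $x_{n+1}$ given $(h_n,a_n)$ equals $q(\cdot\mid x_n,a_n)$; and a \emph{constraint condition} $p(C)=1$, where $C:=\{\omega : (x_k,a_k)\in\Gamma \ \text{for all } k\}$. No marginal condition is needed for $\S$, since an arbitrary Borel initial distribution is admissible and every $p\in\P(\Omega)$ has a Borel $x_0$-marginal. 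That these conditions are equivalent to $p\in\S$ I would prove by disintegrating $p$ into universally measurable action kernels $\mu_n(da_n\mid h_n)$ (Borel-space conditional distributions exist and may be taken universally measurable), using the constraint condition to conclude $\mu_n(A(x_n)\mid h_n)=1$ for $p$-almost every $h_n$, and patching $\mu_n$ on the exceptional null set with a fixed Jankov--von Neumann selector $\phi$ satisfying $\phi(x)\in A(x)$ (\cite[Prop.~7.49]{bs}); the uniqueness of the measure determined by $(p_0,\{\mu_n\},q)$ (\cite[Prop.~7.45]{bs}) then yields $p\in\S$, and the reverse inclusion is immediate.

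For the analyticity of these two condition sets I would argue as follows. The consistency condition can be written, via a countable family of bounded continuous test functions $f$ on $\X$ and $\psi$ on $H_n\times\A$, as the vanishing of $p\mapsto \int \psi(h_n,a_n)\big(f(x_{n+1})-\int f\,dq(\cdot\mid x_n,a_n)\big)\,dp$; each such functional is Borel in $p$ because $q$ is a Borel kernel and integration of a bounded Borel function against $p$ is Borel on $\P(\Omega)$, so the consistency set is Borel. For the constraint condition I would note that $C$ is analytic (a countable intersection of preimages of the analytic set $\Gamma$ under continuous coordinate projections), so $C^c$ is co-analytic and $\ind_{C^c}$ is lower semianalytic (its sublevel set $\{\ind_{C^c}\le r\}$ is empty, equal to $C$, or all of $\Omega$). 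By the integration theorem for lower semianalytic functions (\cite[Prop.~7.48]{bs}), $p\mapsto p(C^c)=\int \ind_{C^c}\,dp$ is lower semianalytic, whence $\{p: p(C)=1\}=\{p:p(C^c)\le 0\}$ is analytic. Intersecting, $\S$ is analytic. For $\S^0$ I would additionally intersect with the Borel set of $p$ whose $x_0$-marginal is a Dirac measure — the image of $\X$ under the Borel embedding $x\mapsto\delta_x$ is Borel in $\P(\X)$, and the $x_0$-marginal map $\P(\Omega)\to\P(\X)$ is continuous — which keeps the set analytic.

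The Markov and stationary cases require one further condition, and this is where I expect the main obstacle to lie. For $\S_m$ I would impose that for each $n$ the action $a_n$ be conditionally independent of $h_{n-1}=(x_0,a_0,\ldots,a_{n-1})$ given $x_n$ under $p$ (equivalently, that the action kernel $\mu_n(da_n\mid h_n)$ may be chosen to depend on $h_n$ only through $x_n$); for $\S_s$ I would add time-homogeneity of these kernels. The difficulty is that these are conditions on the \emph{conditional distributions} of $p$, which do not depend continuously on $p$, so expressing them as analytic constraints is delicate. My first approach would be to realize the regular conditional probabilities as (universally) measurable functions of the joint law $p$ via a measurable disintegration theorem and then phrase Markovness and stationarity as measurable equalities of these kernels; the technical care lies in controlling the \emph{almost-everywhere} nature of these equalities and the passage from universal to Borel/analytic measurability. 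An alternative I would pursue in parallel is the \emph{mimicking} construction underlying the identity $g^m_i=g^*_i$ (the proof of \cite[Prop.~1]{ShrB79}): build a map $\Phi$ sending a strategic measure to the strategic measure of the Markov policy reproducing its time-$n$ state--action marginals, show $\Phi$ is Borel, and identify $\S_m$ with the fixed-point set $\{p\in\S:\Phi(p)=p\}$, which is then analytic; the crux here is establishing the measurability of $\Phi$. The stationary case would then follow by combining the Markov condition with an analogous equality forcing the per-stage kernels to coincide.
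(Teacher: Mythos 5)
Your treatment of $\S$ and $\S^0$ is correct and essentially the paper's own argument. The paper characterizes $\S$ by the per-stage constraint $p\{(x_n,a_n)\in\Gamma\}=1$ together with countably many integral consistency conditions against a measure-determining family of indicator functions (its Lemma on the characterization of $\S$), shows the constraint sets are analytic via \cite[Prop.~7.43]{bs} (your integration of the lower semianalytic $\ind_{C^c}$ is in effect a proof of that proposition) and the consistency sets Borel, and then obtains $\S^0$ from $\S$ by intersecting with the Borel set of measures whose $x_0$-marginal is Dirac --- exactly your step. Your disintegration-plus-patching argument for the equivalence (fixing the action kernels on the exceptional null set with a selector) also mirrors the paper, which patches with a fixed $\mu^o\in\Pi_s$ obtained from Jankov--von Neumann.

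For $\S_m$ and $\S_s$, however, your proposal stops at the decisive point, and this is a genuine gap: you correctly identify the obstacle (conditional distributions of $p$ do not depend measurably on $p$ in any obvious way, and Markov/stationarity conditions are a.e.\ statements), but neither of your two routes is carried out, and your route (a) in particular would founder on expressing conditional independence as analytic constraints. The paper's resolution is different in form from both: it never constrains $p$ directly, but instead exhibits $\S_m$ as a \emph{Borel image of an analytic set}. From the set $\Delta$ of admissible marginal-distribution sequences $z=(p_0,\gamma_0,\gamma_1,\ldots)$ --- already known to be analytic from \cite{ShrB79} --- it builds a Borel map $\zeta_m:\Z\to\P(\Omega)$ assembling a strategic measure from $q$ and disintegrations of the $\gamma_n$, and proves $\S_m=\zeta_m(\Delta)$, whence analyticity by \cite[Prop.~7.40]{bs}. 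The key device, which answers precisely your measurability worry, is to disintegrate not each measure separately but the single Borel stochastic kernel $\mrho(d(x,a)\mid\gamma):=\gamma(d(x,a))$ on $\X\times\A$ given $\P(\X\times\A)$, via \cite[Cor.~7.27.1]{bs}; this yields $\mrho_2(da\mid x;\gamma)$ \emph{jointly} Borel in $(x,\gamma)$, a parametrized disintegration valid for all $\gamma$ simultaneously. With that device your mimicking map $\Phi$ is indeed Borel ($\Phi$ is $\zeta_m$ composed with marginal extraction), and your fixed-point identification $\S_m=\S\cap\{p:\Phi(p)=p\}$ would also close the argument, since the diagonal is Borel; but without it neither route goes through. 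Finally, for $\S_s$ your phrase ``forcing the per-stage kernels to coincide'' hides a second real issue: the kernels at different times are defined only up to $\gamma_n$-null sets for \emph{different} measures $\gamma_n$, so pointwise coincidence is not well posed. The paper handles this by aggregating $\tilde\gamma=\sum_{k\geq 0}2^{-k-1}\gamma_k$ and requiring each $\gamma_n$ to disintegrate through the single kernel $\mrho_2(\cdot\mid\cdot\,;\tilde\gamma)$; this defines a Borel set $G$ of sequences, and $\S_s=\zeta_s\big(\Delta\cap(\P(\X)\times G)\big)$ is again a Borel image of an analytic set.
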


Since the average cost $J^{(i)}(\pi, x)$ or $\tJ^{(i)}(\pi, x)$ is a function of $\Pr_x^\pi$, we can rewrite the average cost problems under these criteria as optimization problems on the set of probability measures induced by the policies in $\Pi, \Pi_m$, or $\Pi_s$. 
Theorem~\ref{thm-strat-m} allows us to cast these optimization problems as partial minimization problems involving lower semianalytic functions---a class of problems whose optimality properties are well studied (cf.\ Section~\ref{sec-proof-review}). We then transfer the results from the space of induced probability measures to the space of policies, thereby obtaining the optimality results given below.

\begin{theorem}[\mdp, \mdn] \label{thm-ac-basic}
For any average cost criterion $J^{(i)}$, $1 \leq i \leq 4$, defined in (\ref{eq-crt1})-(\ref{eq-crt3}), the following hold:
\begin{enumerate}[leftmargin=0.7cm,labelwidth=!]
\item[\rm (i)] The functions $g^*_i=g^m_i$ and $g^s_i$ are lower semianalytic.
\item[\rm (ii)] For every $\epsilon > 0$, there exists a randomized semi-Markov $\epsilon$-optimal policy. If there exists an optimal policy for each state $x \in \X$, then there exists a randomized semi-Markov optimal policy.
\item[\rm (iii)] For every $\epsilon > 0$, there exists a randomized semi-stationary policy that attains $g^s_i$ within $\epsilon$ accuracy. This also holds for $\epsilon=0$ if $g^s_i(x)$ is attained by some stationary policy for each state $x \in \X$. 
\end{enumerate}
\end{theorem}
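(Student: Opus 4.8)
The plan is to lift both problems from $\Pi$ to the space $\P(\Omega)$ of strategic measures, where Theorem~\ref{thm-strat-m} makes the feasible sets analytic, and then to invoke the partial-minimization and selection theory for lower semianalytic functions on analytic sets (\cite[Prop.~7.47, Prop.~7.50]{bs}). First I would attach to each criterion a cost functional $\hat J^{(i)}$ on $\P(\Omega)$ with $\hat J^{(i)}(p)=J^{(i)}(\pi,x)$ whenever $p$ is the restriction of $\Pr^\pi_x$ to $\B(\Omega)$ (the two integrals agree because the integrands are universally measurable and a strategic measure is determined by its Borel restriction, Lemma~\ref{lem-umset}). Extending $c$ by $+\infty$ off $\Gamma$ keeps it lower semianalytic, so $c_k(\omega):=c(x_k,a_k)$ is lower semianalytic on $\Omega$, and by the integration result \cite[Prop.~7.48]{bs} the functionals $p\mapsto\hat J_n(p)=\int\sum_{k=0}^{n-1}c_k\,dp$ and their time shifts $\hat J_{n,j}$ are lower semianalytic (the \mdp/\mdn hypotheses keep these integrals free of $+\infty-\infty$). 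It then remains to note that the operations assembling $\hat J^{(i)}$ from the $\hat J_{n,j}$ --- a countable $\sup_j$ or $\inf_j$ followed by $\limsup_n$ or $\liminf_n$ --- all preserve lower semianalyticity: since analytic sets are closed under countable unions and intersections, level sets such as $\{\limsup_n f_n\le r\}=\bigcap_m\bigcup_N\bigcap_{n\ge N}\{f_n\le r+1/m\}$ are analytic whenever each $f_n$ is lower semianalytic. Hence every $\hat J^{(i)}$ is lower semianalytic on $\P(\Omega)$.

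For part (i), I would regard the initial-state marginal $\sigma:p\mapsto p\circ x_0^{-1}$ as a continuous map $\P(\Omega)\to\P(\X)$ and, using the Borel embedding $x\mapsto\delta_x$, form the analytic sets $D_\star=\{(x,p):p\in\S^0_\star,\ \sigma(p)=\delta_x\}$ for $\star\in\{m,s\}$ (their analyticity is Theorem~\ref{thm-strat-m}). Because $g^m_i=g^*_i$, we have $g^*_i(x)=\inf\{\hat J^{(i)}(p):(x,p)\in D_m\}$ and $g^s_i(x)=\inf\{\hat J^{(i)}(p):(x,p)\in D_s\}$; partial minimization of the lower semianalytic $\hat J^{(i)}$ over these analytic constraint sets (\cite[Prop.~7.47]{bs}) then shows both functions are lower semianalytic.

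For parts (ii) and (iii) I would apply the selection half of the same theory (\cite[Prop.~7.50]{bs}) to obtain, for each $\epsilon>0$, a universally measurable selector $x\mapsto p_x$ --- into $\S^0_m$ for (ii), into $\S^0_s$ for (iii) --- meeting the $\epsilon$-optimality requirement of (\ref{def-pol-opt}), the $-\epsilon^{-1}$ branch being exactly what the selection theorem supplies. Each $p_x$ is the strategic measure of a Markov (resp.\ stationary) policy, so disintegrating it along the coordinates recovers kernels $\mu^x_n(da_n\mid x_n)$ (resp.\ a single kernel $\mu^x(da\mid x_n)$) that depend universally measurably on $x$. I would then splice these into one policy $\hat\pi$ that reads off the initial state and uses the corresponding selection at every stage, $\hat\mu_n(da_n\mid h_n):=\mu^{x_0}_n(da_n\mid x_n)$; this depends only on $(x_0,x_n)$, hence is semi-Markov (resp.\ semi-stationary, since at $n=0$ the kernel is $\mu^{x_0}(da\mid x_0)=\tilde\mu(da\mid x_0,x_0)$). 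As $\Pr^{\hat\pi}_x$ restricted to $\B(\Omega)$ equals $p_x$ for every $x$, we get $J^{(i)}(\hat\pi,x)=\hat J^{(i)}(p_x)$, so $\hat\pi$ is $\epsilon$-optimal (resp.\ attains $g^s_i$ within $\epsilon$). The $\epsilon=0$ assertions follow identically from the exact selector the theory yields once the relevant infimum is attained at every $x$, which is precisely the stated hypothesis (using $g^m_i=g^*_i$ to keep the optimal selector in $\S^0_m$).

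The main obstacle I expect is the final splicing: producing a universally measurable disintegration of each selected Markov/stationary strategic measure into its defining kernels, jointly measurable in the parameter $x$, and checking that the reassembled kernels still obey the control constraint (\ref{eq-control-constraint}) and reproduce $p_x$ as $\Pr^{\hat\pi}_x|_{\B(\Omega)}$. By contrast, the lower semianalyticity of the $\limsup$/$\liminf$ functionals, which looks like the delicate step, turns out to be routine once one uses closure of the analytic sets under countable intersections.
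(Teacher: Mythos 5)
Your proposal is correct and follows essentially the same route as the paper: lifting to strategic measures via Theorem~\ref{thm-strat-m}, establishing lower semianalyticity of the cost functionals exactly as in Lemma~\ref{lem-f-lsa}, and applying \cite[Props.~7.47, 7.50]{bs} to the partial minimization over the analytic sections $\tilde\S^0_\star(x)$. The splicing obstacle you flag at the end is precisely the content of the paper's Prop.~\ref{prp-ummap-pol}, which resolves it via the Borel isomorphisms of Lemmas~\ref{lem-strm2b} and~\ref{lem-strm3b} between $\S_m, \S_s$ and the marginal-sequence sets $\Delta, \Delta_s$ (yielding kernels $\mrho_2\big(da \mid x_n; \gamma_n(x)\big)$ that depend universally measurably on the initial state), together with a modification on null sets by a fixed stationary policy $\mu^o$ to enforce the control constraint (\ref{eq-control-constraint}) everywhere.
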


\begin{theorem}[\ptmdp, \ptmdn]  \label{thm-ac-basic2}
For any average cost criterion $\tJ^{(i)}$, $1 \leq i \leq 4$, defined in (\ref{eq-pt-ac1})-(\ref{eq-pt-ac2a}), the following hold:
\begin{enumerate}[leftmargin=0.7cm,labelwidth=!]
\item[\rm (i)] The functions $g^*_i, g^m_i$, and $g^s_i$ are lower semianalytic. 
\item[\rm (ii)] For every $\epsilon > 0$, there exists a randomized (history-dependent) $\epsilon$-optimal policy. If there exists an optimal policy for each state $x \in \X$, then there exists a randomized optimal policy.
\item[\rm (iii)] For every $\epsilon > 0$, there exists a randomized semi-Markov (resp.\ semi-stationary) policy that attains $\tg^m_i$ (resp.\ $\tg^s_i$) within $\epsilon$ accuracy. This also holds for $\epsilon=0$ if $\tg^m_i(x)$ (resp.\ $\tg^s_i(x)$) is attained by some Markov (resp.\ stationary) policy for each state $x \in \X$.
\end{enumerate}
\end{theorem}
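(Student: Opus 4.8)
The plan is to exploit a feature absent from the criteria $J^{(i)}$: each path-based criterion is literally the integral of a fixed path functional against the strategic measure. Writing $p$ for the restriction of $\Pr^\pi_x$ to $\B(\Omega)$, we have $\tJ^{(i)}(\pi,x) = \int_\Omega \Phi_i\,dp =: F_i(p)$, where, viewing $\tilde c_n$ and $\tilde c_{n,j}$ as functions of $\omega$, we set $\Phi_1 = \limsup_n \tilde c_n$, $\Phi_2 = \liminf_n \tilde c_n$, $\Phi_3 = \lim_n \sup_{j\ge 0}\tilde c_{n,j}$, and $\Phi_4 = \lim_n \inf_{j\ge 0}\tilde c_{n,j}$. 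This recasts each optimal cost function as a partial minimization of $F_i$ over an analytic set of strategic measures: $\tg^*_i(x) = \inf\{F_i(p) : p\in\S^0,\ p \text{ has initial state } x\}$, and similarly $\tg^m_i, \tg^s_i$ with $\S^0_m, \S^0_s$ in place of $\S^0$; all three constraint sets are analytic by Theorem~\ref{thm-strat-m}. The proof then proceeds in the two stages used for Theorem~\ref{thm-ac-basic}: establish that $F_i$ is lower semianalytic on $\P(\Omega)$, then apply the partial-minimization and selection theory for lower semianalytic functions and transfer the selected measures back to policies.

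For the first stage I would first check that each $\Phi_i$ is lower semianalytic on $\Omega$. Each $c(x_k,a_k)$ is lsa as the composition of $c$ with a Borel coordinate projection; since $c$ is real-valued and bounded below (resp.\ above) in the \ptmdp (resp.\ \ptmdn) case, the finite sums and their scalings $\tilde c_n, \tilde c_{n,j}$ are lsa with no $+\infty-\infty$ arising. Lower semianalyticity is preserved under countable pointwise suprema and infima---their level sets are countable intersections, resp.\ unions, of analytic sets---so the inner $\sup_{j\ge0}$ and $\inf_{j\ge0}$ are lsa, and the outer $\limsup_n$, $\liminf_n$ (genuine limits for $\Phi_3,\Phi_4$, per Remark~\ref{rmk-ac-cri}) are lsa as well. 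To pass from $\Phi_i$ to $F_i$, I would use the identity kernel $q(d\omega\,|\,p):=p(d\omega)$, which is a Borel-measurable stochastic kernel on $\Omega$ given $\P(\Omega)$ since $p\mapsto p(B)$ is Borel for each $B\in\B(\Omega)$, and then invoke the lsa integration result \cite[Prop.~7.48]{bs}, giving $F_i(p)=\int\Phi_i\,dp$ lsa on $\P(\Omega)$. The one-sided bound on $c$ again guarantees the integral avoids $+\infty-\infty$.

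Part (i) then follows: extending $F_i$ by $+\infty$ off the analytic constraint set keeps it lsa, and the partial infimum of an lsa function is lsa \cite[Prop.~7.47]{bs}, so $\tg^*_i, \tg^m_i, \tg^s_i$ are lower semianalytic. For parts (ii) and (iii) I would, for each $\epsilon>0$, use the selection theorem \cite[Prop.~7.50]{bs} to obtain a universally measurable map $x\mapsto p_x$ into $\S^0$ ($\S^0_m$, $\S^0_s$) attaining $\tg^*_i(x)$ (resp.\ $\tg^m_i(x), \tg^s_i(x)$) within $\epsilon$; when an optimal policy exists for each state, the attainment clause of the same theorem yields a selection with $\epsilon=0$. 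Each $p_x$ is the strategic measure of a policy $\pi^x$ whose structure is inherited from the set it belongs to (arbitrary, Markov, or stationary). I would then assemble the single policy $\pi$ that, after observing the initial state $x_0$, follows $\pi^{x_0}$ forever---its $n$-th kernel is the $n$-th kernel of $\pi^{x_0}$ evaluated at the current history. Since $h_n$ records $x_0$ and every $\pi^x$ meets the control constraint, $\pi$ is a legitimate universally measurable policy with $\Pr^\pi_x=p_x$ and hence $\tJ^{(i)}(\pi,x)=F_i(p_x)$ for all $x$, yielding the ($\epsilon$-)optimal policy of part (ii). When the $\pi^x$ are Markov (stationary), the $n$-th kernel depends only on $(x_0,x_n)$---and, in the stationary case, not on $n$---so $\pi$ is automatically semi-Markov (semi-stationary), giving part (iii).

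The step I expect to be the main obstacle is this final transfer. It requires disintegrating each $p_x$ into a sequence of conditional action-kernels in a way that is jointly universally measurable in the initial state and the history, and verifying that this disintegration preserves the Markov/stationary structure when $p_x\in\S^0_m$ ($\S^0_s$), so that the assembled $\pi$ is genuinely universally measurable and semi-Markov (semi-stationary). I would handle this with the same policy/strategic-measure correspondence and its measurability developed for Theorems~\ref{thm-strat-m} and~\ref{thm-ac-basic}; the only work specific to the present theorem is verifying that the functionals $\Phi_i$ and their integrals $F_i$ fit into that framework, which is exactly the content of the first two stages above.
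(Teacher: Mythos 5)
Your proposal is correct and takes essentially the same route as the paper: your first two stages (lower semianalyticity of the path functionals $\Phi_i$ on $\Omega$ and of $p \mapsto \int \Phi_i \, dp$ via \cite[Cor.~7.48.1]{bs}) are exactly the paper's Lemma~\ref{lem-f-lsa}, and the partial-minimization and selection steps via \cite[Props.~7.47, 7.50]{bs} over the analytic sets $\tilde\S^0, \tilde\S^0_m, \tilde\S^0_s$ reproduce the paper's joint proof of Theorems~\ref{thm-ac-basic}--\ref{thm-ac-basic2}. The transfer step you correctly single out as the main obstacle---disintegrating the selected measures $x \mapsto p_x$ into action kernels jointly universally measurable in $(x, h_n)$ while preserving the semi-Markov/semi-stationary structure---is precisely the content of the paper's Prop.~\ref{prp-ummap-pol}, which is proved with exactly the $\zeta_m$, $\zeta_s$, $\mrho_2$ machinery from the proof of Theorem~\ref{thm-strat-m} that you indicate.
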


\begin{rem}[comparison with some prior results on average-cost MDPs] \label{rmk-basicthm-compare} \hfill \\
\noindent (a) As pointed out by Feinberg~\cite{Fei80}, Strauch's results \cite[Lem.~4.1 and the proof of Thm.~8.1]{Str-negative} for discounted and total cost Borel-space MDPs can be carried over to the average cost case. In particular, if the set $\Gamma$ of feasible state-action pairs is Borel and a Borel measurable policy exists, 
under any average cost criterion $J^{(i)}$ (resp.\ $\tJ^{(i)}$), one can extend Strauch's arguments to show, for any $\rho \in \P(\X)$ and $\epsilon > 0$, the existence of a randomized Borel measurable semi-Markov (resp.\ history-dependent) policy that is $\epsilon$-optimal for \emph{$\rho$-almost all} states. This $\rho$-almost-everywhere $\epsilon$-optimality is due to the restriction of the policy space to include only Borel measurable policies. By contrast, with universally measurable policies, there exist policies that are optimal or $\epsilon$-optimal \emph{everywhere}.\smallskip

\noindent (b) Under the standard average cost criterion $J^{(1)}$, it is known that even in an MDP with a countable state space, a finite action space, and bounded one-stage costs, there need not exist a nonrandomized semi-Markov $\epsilon$-optimal policy \cite[Example 3, Chap.~7]{DyY79} nor a randomized Markov $\epsilon$-optimal policy \cite[Sec.~5]{Fei80}. In both counterexamples, there exists an optimal policy for each state. So without extra conditions on the MDP, under $J^{(1)}$, Theorem~\ref{thm-ac-basic}(ii) is the strongest possible.\smallskip

\noindent (c) For reward (cost) criteria that are convex (concave) w.r.t.\ the probability measure induced on $\Omega$, Feinberg \cite{Fei82,Fei82a} proved the existence of \emph{nonrandomized} policies that are $\epsilon$-optimal, in the $\rho$-almost-everywhere sense mentioned earlier, for MDPs with Borel measurable policies. By extending his analyses to universally measurable policies and combining the results with Theorems~\ref{thm-ac-basic}-\ref{thm-ac-basic2}, we can show the following. When the one-stage cost function is bounded below, under the criteria $J^{(2)}$, $J^{(4)}$, and $\tJ^{(i)}$, $1 \leq i \leq 4$,  the conclusions of Theorem~\ref{thm-ac-basic}(ii) and Theorem~\ref{thm-ac-basic2}(ii) still hold if we replace ``randomized'' by ``nonrandomized'' and replace ($\epsilon$-)optimality by ($\epsilon$-)optimality in the $\rho$-almost-everywhere sense, and similarly, the conclusions of Theorem~\ref{thm-ac-basic2}(iii) for the case $\tilde g_i^m$ still hold if we replace ``a randomized semi-Markov policy'' that attains $\tilde g_i^m(x)$ within $\epsilon$-accuracy for all states $x$ 
by ``a nonrandomized semi-Markov policy'' that attains $\tilde g_i^m(x)$ within $\epsilon$-accuracy for $\rho$-almost all states $x$, where $\rho$ is any given probability measure on $\X$. 

Notice the $\rho$-almost-everywhere sense of optimality in the preceding statements. It arises from a final step in the analysis, which uses the Blackwell--Ryll-Nardzewski selection theorem \cite[Thm.~2]{BlR63} to extract a desired nonrandomized policy from, in general, an uncountable set of nonrandomized policies, the mixture of which, roughly speaking, represents a randomized policy that is {($\epsilon$-)optimal} everywhere. It is an open question whether an alternative selection theorem can be used in this analysis to strengthen the $\rho$-almost-everywhere optimality to optimality. 
\qed
\end{rem}

%\smallskip
\begin{rem}[about the proofs] \label{rmk-basic-thm-prf}
(a) The proof of $\S$ and $\S^0$ being analytic sets is similar to the proofs given in \cite[Sec.~7]{Str-negative} and \cite[Chap.\ 3.5]{DyY79} for the strategic measures induced by Borel measurable policies, except that the latter analyses deal with Borel sets, whereas the proof here also relies critically on the properties of analytic sets and of probability measures on such sets. \smallskip

\noindent (b) For the expected average cost criteria $J^{(i)}$, Theorem~\ref{thm-ac-basic} can be proved by working with the marginal probability distributions of $(x_n, a_n)$, $n \geq 0$, instead of the strategic measures, induced by the policies. This proof approach largely follows the one used by Shreve and Bertsekas \cite{ShrB79} (see also the book \cite[Chap.\ 9]{bs}) to analyze discounted/total cost MDPs with universally measurable policies.
They called those marginal distributions \emph{admissible sequences}, showed that the set $\Delta$ of all such sequences is
analytic \cite[Lem.~1]{ShrB79}, and related a discounted or total cost MDP to a partial minimization problem on $\Delta$. 
A proof of Theorem~\ref{thm-ac-basic}(i)-(ii) using this approach%footnote starts
\footnote{The proof is not exactly the same as Shreve and Bertsekas's proof mentioned above. The main difference is the following: they related the partial minimization problem on $\Delta$ to a deterministic control problem and transferred the optimality equations and other optimality properties of the latter problem to the original discounted/total cost MDP. In the average cost case, like the original MDP, that deterministic control problem need not possess useful DP-type properties, so we instead work directly with the partial minimization problem and its ($\epsilon$-)optimal solution mappings.}
%footnote ends 
can be found in the author's report \cite[Appendix A]{Yu19} (that proof is for the criterion $J^{(1)}$ and two special cases of the \mdp and \mdn models; however, the arguments for the more general cases are essentially the same). 

Although we do not take this proof approach in the present paper, our proofs still involve those induced marginal probability distributions: among others, we use the analyticity of the set $\Delta$ in proving Theorem~\ref{thm-strat-m} for the subsets $\S_m$ and $\S_s$ of strategic measures.\smallskip

\noindent (c) The advantage of working with the strategic measures is that it allows one to handle the two types of criteria, $J^{(i)}$ and $\tJ^{(i)}$, in the same way. Moreover, the analysis carries over to other risk-sensitive criteria, such as the average criteria considered by \cite{CaS10,Jas07} for risk-sensitive MDPs. In particular, Theorem~\ref{thm-strat-m} and the proof arguments of Theorems~\ref{thm-ac-basic}-\ref{thm-ac-basic2} are applicable to risk-sensitive MDPs with Borel spaces and universally measurable policies.
\qed
\end{rem}

\subsection{Almost-Everywhere Constancy of Optimal Cost Functions} \label{sec-ae-const}

We now study the structure of the optimal average cost functions for the \mdn and \ptmdn models.

\subsubsection{Results under a General Reachability Condition} \label{sec-ae-const-a}

Consider first the two limsup optimal average cost functions $g^*_1$ and $g^*_3$ for the \mdn model. 
We introduce additional assumptions to bound from above the expected one-stage costs over time.
Let $\M_+(\X)$ denote the set of all real-valued, nonnegative, universally measurable functions on $\X$.

\begin{assumption} \label{cond-ac-n1}
For each $\pi \in \Pi$, $M_\pi(x) : = \sup_{n \geq 0} \E_x^\pi [ c^+(x_n, a_n) ] < \infty$ for all $x \in \X$, and
$\int_\X M_\pi(y) \, q(dy \,|\, x, a) < \infty$ for all $(x, a) \in \Gamma$.
\end{assumption}

\begin{assumption} \label{cond-ac-n2}
There exists a function $M \in \M_+(\X)$ such that: 
\begin{enumerate}[leftmargin=0.65cm,labelwidth=!]
\item[\rm (a)] for each $x \in \X$, $\sup_{n \geq 0} \E_x^\pi [ c^+(x_n, a_n) ] \leq  M(x)$ for some policy $\pi \in \Pi$;
\item[\rm (b)] $\sup_{n \geq 0} \E_x^\pi [ M(x_n) ] <  \infty$ for all $x \in \X, \pi \in \Pi$. 
\end{enumerate}
\end{assumption}

\begin{rem} \rm
Assumption~\ref{cond-ac-n1} or \ref{cond-ac-n2} implies that $g^*_i < + \infty$, $i = 1, 3$. Assumptions~\ref{cond-ac-n1} and \ref{cond-ac-n2} are clearly satisfied, if $c(\cdot)$ is bounded above or if (\ref{eq-exuc1}) in Example~\ref{ex-uc} holds. Specifically, in the latter case, with $w(\cdot)$, $\beta$, $b$, and $\tilde b$ as in Example~\ref{ex-uc}, we have $M_\pi(x) \leq \tilde b + w(x)$, $\int_\X M_\pi(y) \, q(dy \,|\, x, a) \leq b + \tilde b + \beta w(x)$, and for the function $M(x) : = \tilde b + w(x)$, we have $\sup_{n \geq 0} \E_x^\pi [ M(x_n) ] \leq 2 \tilde b + w(x)$.\qed
\end{rem} 
 
\begin{lem}[\mdn] \label{lem-ineq}
Under Assumption~\ref{cond-ac-n1}, for $g^*=g^*_1$ or $g^*_3$, we have
\begin{equation} \label{eq-ac-ineq}
 g^* (x) \leq \inf_{a \in A(x)} \int_\X g^*(y) \, q(dy \mid x, a), \qquad \forall \, x \in \X.
\end{equation} 
\end{lem}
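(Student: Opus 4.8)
The plan is to prove, for each fixed $x \in \X$ and each fixed $a \in A(x)$, the single-action inequality $g^*(x) \le \int_\X g^*(y)\, q(dy \mid x,a)$, and then take the infimum over $a \in A(x)$. The intuition is that prepending one control step to a near-optimal continuation cannot change the long-run average, because the first-stage cost is divided away in the Cesàro limit; what survives is the $q(\cdot \mid x,a)$-average of the optimal average cost of the continuation. The decisive device is to invoke Theorem~\ref{thm-ac-basic}(ii) to fix, for each $\epsilon > 0$, a \emph{single} policy $\pi^\epsilon \in \Pi$ that is $\epsilon$-optimal at \emph{every} state at once, so that $J^{(i)}(\pi^\epsilon, y) \le \phi_\epsilon(y)$ for all $y$, where $\phi_\epsilon(y) := g^*(y) + \epsilon$ on $\{g^* > -\infty\}$ and $\phi_\epsilon(y) := -\epsilon^{-1}$ on $\{g^* = -\infty\}$; note $\phi_\epsilon \downarrow g^*$ as $\epsilon \downarrow 0$. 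Using one policy valid for all states, rather than selecting a near-optimal policy per state, is what later supplies a single integrable dominating function; a per-state selection would leave the $c^+$-bound varying uncontrollably with $y$.

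Next I would build a composite policy $\pi'$ from $x$ that plays the deterministic action $a$ at stage $0$ and, from stage $1$ on, runs $\pi^\epsilon$ with $x_1$ as its initial state (formally, using the shifted history $(x_1, a_1, \ldots, x_n)$ in the kernel $\mu^\epsilon_{n-1}$). This $\pi'$ is a valid universally measurable policy, and by the composition property of strategic measures the law of $(x_1, a_1, x_2, \ldots)$ under $\Pr^{\pi'}_x$, conditioned on $x_1 = y$, coincides with $\Pr^{\pi^\epsilon}_y$. Hence $J_n(\pi', x) = c(x,a) + \int_\X J_{n-1}(\pi^\epsilon, y)\, q(dy \mid x,a)$, and analogously for the shifted cost functions $J_{n,j}$ that enter $J^{(3)}$.

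The analytic core is then to divide by $n$ and pass to the limit. The first-stage term $n^{-1} c(x,a)$ vanishes (the degenerate subcase $c(x,a) = -\infty$ forces $g^*(x) = -\infty$ and is immediate under the $+\infty-\infty=+\infty$ convention), so $J^{(1)}(\pi', x) = \limsup_n \int_\X f_n(y)\, q(dy\mid x,a)$ with $f_n(y) := n^{-1} J_{n-1}(\pi^\epsilon, y)$. I expect the interchange of $\limsup_n$ with the integral, i.e.\ the reverse-Fatou inequality $\limsup_n \int f_n\, q \le \int \limsup_n f_n\, q$, to be the one genuine obstacle, and this is exactly where Assumption~\ref{cond-ac-n1} is used: the bound $n^{-1}\sum_{k} \E^{\pi^\epsilon}_y[c^+(x_k,a_k)] \le M_{\pi^\epsilon}(y)$ gives the uniform upper estimate $f_n \le M_{\pi^\epsilon}$, while its second clause, $\int_\X M_{\pi^\epsilon}(y)\, q(dy\mid x,a) < \infty$, makes the dominator integrable, so reverse Fatou applies and $\limsup_n f_n(y) = J^{(1)}(\pi^\epsilon, y)$. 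For $g^* = g^*_3$ the same scheme works after first moving the supremum outside the integral via $\sup_j \int \le \int \sup_j$ and dominating $\sup_j n^{-1} J_{n,j}(\pi^\epsilon, y)$ by the same $M_{\pi^\epsilon}(y)$.

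Combining the steps yields $g^*(x) \le J^{(i)}(\pi', x) \le \int_\X J^{(i)}(\pi^\epsilon, y)\, q(dy\mid x,a) \le \int_\X \phi_\epsilon(y)\, q(dy\mid x,a)$. Letting $\epsilon \downarrow 0$ and using monotone convergence for the decreasing family $\phi_\epsilon \downarrow g^*$ (legitimate since $g^* < +\infty$ under Assumption~\ref{cond-ac-n1}, which controls the positive parts) gives $g^*(x) \le \int_\X g^*(y)\, q(dy\mid x,a)$, and taking the infimum over $a \in A(x)$ finishes the proof. The remaining degenerate subcases—$q(\{g^*=-\infty\}\mid x,a) > 0$, forcing the right side to $-\infty$ and driving $J^{(i)}(\pi',x)\to-\infty$, and $\int (g^*)^+ q = +\infty$, making the right side $+\infty$—each collapse one side to $\pm\infty$ and are verified directly against the paper's summation convention. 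Thus the only substantive difficulty is the $\limsup$/integral interchange, and the two clauses of Assumption~\ref{cond-ac-n1} are tailored precisely to remove it once an everywhere-$\epsilon$-optimal policy is fixed.
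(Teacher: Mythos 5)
Your proposal is correct and follows essentially the same route as the paper's proof: an everywhere-$\epsilon$-optimal (semi-Markov) policy from Theorem~\ref{thm-ac-basic}(ii), the one-step composite policy, the decomposition $J_n(\pi',x)=c(x,a)+\int_\X J_{n-1}(\pi^\epsilon,y)\,q(dy\mid x,a)$, and the reverse-Fatou interchange dominated by the integrable $M_{\pi^\epsilon}$ from Assumption~\ref{cond-ac-n1} (the paper's Lemma~\ref{lem-fatou}), including the $\sup_j$-inside-the-integral step for $g^*_3$. Your only deviation is cosmetic: where the paper splits the endgame into the cases $q(\{g^*=-\infty\}\mid x,a)=0$ and $q(\{g^*=-\infty\}\mid x,a)>0$, you dispatch both at once via downward monotone convergence of $\phi_\epsilon\downarrow g^*$, which is legitimate because $(g^*)^+\le M_{\pi^\epsilon}$ makes the positive parts $q(\cdot\mid x,a)$-integrable.
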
 

Note that by Theorem~\ref{thm-ac-basic}(i) and Assumption~\ref{cond-ac-n1}, the integrals in (\ref{eq-ac-ineq}) are well defined. 

The inequality (\ref{eq-ac-ineq}) implies that with proper regularity conditions in place, under any policy, $g^*(x_n)$, $n \geq 0$, form a submartingale sequence. By making use of convergence and optional stopping theorems for submartingales, we then obtain the following theorem about the almost-everywhere constancy of $g^*_1$ and $g^*_3$. 

Regarding notation used below, for a process $\{x_n\}$ and a set $B \subset \X$, $\tau_B : = \min \{ n \geq 0 \mid x_n \in B\}$ (the first hitting time of $B$); by convention $\tau_B : = +\infty$ if $\{n \geq 0 \mid x_n \in B\} = \varnothing$. If $s$ and $t$ are two extended real numbers, $s \wedge t : = \min \{s, t\}$. The abbreviations ``a.e.''~and ``a.s.''~stand for ``almost everywhere'' and ``almost surely,'' respectively. 

\begin{theorem}[\mdn] \label{thm-ac-const}
Let Assumptions~\ref{cond-ac-n1}-\ref{cond-ac-n2} hold. Let $g^*=g^*_1$ or $g^*_3$. 
Suppose that there exist a nontrivial $\sigma$-finite measure $\lambda$ on $\U(\X)$ and a set $\hat \X \in \U(\X)$ with $\lambda({\hat \X}^c) = 0$ such that:
\begin{enumerate}[leftmargin=0.65cm,labelwidth=!]
\item[\rm (i)] for each Borel set $B \subset \hat \X$ with $\lambda(B) > 0$ and each $x \in \hat \X$, 
$$  \Pr_x^{\pi_B} ( \tau_B < \infty) = 1 \quad \text{for some} \ \pi_B \in \Pi;$$ 
\item[\rm (ii)] $g^* \not \equiv - \infty$ on $\hat \X$.
\end{enumerate}
Then the following hold:
\begin{enumerate}[leftmargin=0.65cm,labelwidth=!] 
\item[\rm (a)] There is a finite constant $\ell_\lambda$ such that $g^* = \ell_\lambda$ $\lambda$-a.e. 
\item[\rm (b)] 
Let $D = \{y \in \hat \X \mid g^*(y) = \ell_\lambda \}$ and let $f \in \M_+(\X)$ with $f \geq g^*$.
For any initial state $x \in \X$ such that there is a policy $\pi \in \Pi$ under which $\Pr_x^{\pi} (\tau_D < \infty) = 1$ and $\{f(x_n) \}_{n \geq 1}$ are uniformly integrable, we have 
$$g^*(x_{n \wedge \tau_D}) \leq \ell_\lambda \ \ \ \forall \, n \geq 0, \ \ \ \text{$\Pr_x^{\pi}$-a.s.},$$
so, in particular, $g^*(x) \leq \ell_\lambda$. Thus, if $g^*$ is bounded above, then $g^* \leq \ell_\lambda$ on $\hat \X$.
\end{enumerate}
\end{theorem}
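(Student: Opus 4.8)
The plan is to use inequality (\ref{eq-ac-ineq}) of Lemma~\ref{lem-ineq} to turn $\{g^*(x_n)\}$ into a submartingale, and then combine the reachability condition (i) with submartingale convergence and optional stopping. First I would record the submartingale property. Fix $\pi=(\mu_0,\mu_1,\ldots)\in\Pi$ and an initial state, let $\F_n=\sigma(x_0,a_0,\ldots,x_n)$, and note that since $\mu_n(\cdot\mid h_n)$ is supported on $A(x_n)$, inequality (\ref{eq-ac-ineq}) gives
\begin{equation*}
  \E^\pi\big[g^*(x_{n+1})\mid\F_n\big]=\int\mu_n(da_n\mid h_n)\int_\X g^*(y)\,q(dy\mid x_n,a_n)\;\geq\;g^*(x_n).
\end{equation*}
Here Assumption~\ref{cond-ac-n2}(a) yields $g^*\leq M$ (so that $\E^\pi_x[g^*(x_n)^+]\leq\E^\pi_x[M(x_n)]$), Assumption~\ref{cond-ac-n2}(b) bounds $\sup_n\E^\pi_x[M(x_n)]$, and Assumption~\ref{cond-ac-n1} guarantees that the integrals in (\ref{eq-ac-ineq}) are well defined; together these make $\{g^*(x_n)\}$ a submartingale (in the extended sense, with integrable positive part) under every $\Pr^\pi_x$, and in particular $\sup_n\E^\pi_x[g^*(x_n)^+]<\infty$, so the sequence converges $\Pr^\pi_x$-a.s.

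For part (a), I would let $\ell_\lambda$ be the $\lambda$-essential infimum of $g^*$ over $\hat\X$, so that $g^*\geq\ell_\lambda$ holds $\lambda$-a.e.\ by definition and it remains to prove the matching upper bound. Given $\epsilon>0$, the universally measurable set $\{y\in\hat\X\mid g^*(y)<\ell_\lambda+\epsilon\}$ has positive $\lambda$-measure, so after passing to a Borel subset $B_\epsilon$ of equal measure we have $\lambda(B_\epsilon)>0$ and $g^*<\ell_\lambda+\epsilon$ on $B_\epsilon$. Condition (i) then supplies, for each $x\in\hat\X$, a policy $\pi_{B_\epsilon}$ with $\Pr^{\pi_{B_\epsilon}}_x(\tau_{B_\epsilon}<\infty)=1$. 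Applying optional stopping to $\{g^*(x_n)\}$ at $\tau_{B_\epsilon}$ and using $g^*(x_{\tau_{B_\epsilon}})<\ell_\lambda+\epsilon$ should give $g^*(x)\leq\E^{\pi_{B_\epsilon}}_x[g^*(x_{\tau_{B_\epsilon}})]\leq\ell_\lambda+\epsilon$; letting $\epsilon\downarrow0$ yields $g^*(x)\leq\ell_\lambda$ for every $x\in\hat\X$. Combined with the a.e.\ lower bound this gives $g^*=\ell_\lambda$ $\lambda$-a.e. Finiteness comes last: $\ell_\lambda<+\infty$ since $g^*\leq M<+\infty$, and $\ell_\lambda>-\infty$ because the everywhere bound $g^*\leq\ell_\lambda$ on $\hat\X$ together with condition (ii) rules out $\ell_\lambda=-\infty$.

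Part (b) is the pathwise sharpening and is cleaner once the uniform integrability hypothesis is granted. With $D=\{y\in\hat\X\mid g^*(y)=\ell_\lambda\}$ and the given $x,\pi$, I would study the stopped process $Z_n:=g^*(x_{n\wedge\tau_D})$, which is again a submartingale by the optional-sampling argument above. Since $f\in\M_+(\X)$ satisfies $f\geq g^*$ and $\{f(x_n)\}_{n\geq1}$ is uniformly integrable, the stopped family $\{f(x_{n\wedge\tau_D})\}$ is uniformly integrable and dominates $Z_n$ from above, so $\{Z_n^+\}$ is uniformly integrable. A submartingale whose positive parts are uniformly integrable is closed on the right by its a.s.\ limit; here, because $\Pr^\pi_x(\tau_D<\infty)=1$ and $g^*\equiv\ell_\lambda$ on $D$, that limit is the constant $\ell_\lambda$. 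Right-closure then gives $Z_n\leq\E^\pi_x[\ell_\lambda\mid\F_n]=\ell_\lambda$ for all $n$, $\Pr^\pi_x$-a.s., which is exactly $g^*(x_{n\wedge\tau_D})\leq\ell_\lambda$; at $n=0$ this is $g^*(x)\leq\ell_\lambda$. When $g^*$ is bounded above by a constant, the uniform integrability hypothesis holds with $f$ constant, so the final assertion $g^*\leq\ell_\lambda$ on $\hat\X$ follows.

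I expect the main obstacle to be the optional-stopping step in part~(a): justifying $g^*(x)\leq\E^{\pi_{B_\epsilon}}_x[g^*(x_{\tau_{B_\epsilon}})]$ at the possibly unbounded hitting time $\tau_{B_\epsilon}$. The elementary inequality $g^*(x)\leq\E^{\pi_{B_\epsilon}}_x[g^*(x_{n\wedge\tau_{B_\epsilon}})]$ holds for each finite $n$, but passing to the limit requires controlling $g^*(x_n)\,\ind(\tau_{B_\epsilon}>n)$ as $n\to\infty$, which is precisely where uniform integrability enters. Unlike part~(b), where it is assumed outright, in part~(a) it must be extracted from Assumptions~\ref{cond-ac-n1}--\ref{cond-ac-n2} (through $g^*\leq M$, the $L^1$-bound $\sup_n\E^\pi_x[M(x_n)]<\infty$, and the submartingale convergence theorem), most plausibly via a truncation argument applied to the submartingales $\{g^*(x_n)\vee(-K)\}$ followed by $K\to\infty$. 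Securing this limit interchange for \emph{every} $x\in\hat\X$---rather than merely $\lambda$-a.e.---is the delicate point, since the everywhere bound is what makes condition (ii) force $\ell_\lambda$ to be finite.
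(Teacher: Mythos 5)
Your part (b) is essentially the paper's argument: the paper also stops the submartingale, uses the uniform integrability of the positive parts supplied by $f$, and invokes an optional sampling theorem (\cite[Cor.\ IV-4-25]{Nev75}, via an auxiliary stopping time $\hat\tau = \tau_D \wedge \min\{n \mid g^*(x_n) > \ell_\lambda\}$ rather than your right-closure phrasing); the one point you gloss over is the case $g^*(x) = -\infty$, where the stopped process is not an integrable submartingale and the paper works instead with the truncation $g^*(x_n) \vee b$, $b < \ell_\lambda$ finite. Part (a), however, has a genuine gap, and it sits exactly where you flagged it. Your plan needs $g^*(x) \leq \E^{\pi_{B_\epsilon}}_x\big[g^*(x_{\tau_{B_\epsilon}})\big]$ at the \emph{unbounded} hitting time, and your hope of extracting the requisite uniform integrability from Assumptions~\ref{cond-ac-n1}-\ref{cond-ac-n2} by truncation is unrealizable: Example~\ref{ex-const-1} of the paper satisfies both assumptions and condition (i) (with $\lambda = \delta_0$, $\hat\X = \X$), yet $\ell_\lambda = g^*(0) = 0$ while $g^*(k) = k > 0$ for all $k \geq 1$; there $\E_k\big[g^*(x_n)\,\ind(\tau_{\{0\}} > n)\big] = k$ for every $n$, so the boundary term never vanishes and the optional-stopping inequality is simply false. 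Consequently the intermediate conclusion you aim for --- $g^*(x) \leq \ell_\lambda$ for \emph{every} $x \in \hat\X$ --- is false in general (this is precisely why the theorem asserts only $\lambda$-a.e.\ constancy in (a) and defers the pointwise bound to (b) under the extra UI hypothesis), and with it your finiteness argument for $\ell_\lambda > -\infty$, which rests on that everywhere bound, also collapses.

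The paper's mechanism for (a) is different and avoids stopping at infinite times altogether. For the a.e.\ constancy it argues by contradiction: if $g^* < \ell_1$ and $g^* > \ell_2$ each held on Borel sets $B_1, B_2 \subset \hat\X$ of positive $\lambda$-measure, then --- using Lemma~\ref{lem-reachability-cond}, which upgrades condition (i) to a \emph{single} policy reaching a given set from all $x \in \hat\X$ simultaneously (needed because after the first visit the state is an arbitrary random point of $B_1$) --- one stitches together an alternating policy under which $B_1$ and $B_2$ are visited infinitely often almost surely; this forces $g^*(x_n)$ to oscillate across $[\ell_1, \ell_2]$, contradicting the almost sure convergence of the submartingale from Lemma~\ref{lem-subm}, which requires only the $L^1$-bound, not optional stopping. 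Finiteness of the constant is then obtained separately by ruling out $g^* = -\infty$ on a set of positive $\lambda$-measure: optional stopping at the \emph{bounded} times $\tau_B \wedge N$ gives $\E^{\pi_B}_x[g^*(x_{\tau_B \wedge N})] \geq g^*(x) > -\infty$, while $\Pr^{\pi_B}_x(\tau_B \leq N) > 0$ for large $N$ makes that expectation equal to $-\infty$ (the complementary term being finite by Assumption~\ref{cond-ac-n2}); note that only positive hitting probability by time $N$ is needed here, which is why bounded-time stopping suffices. To repair your proposal you would have to replace the essential-infimum/optional-stopping scheme in (a) with an oscillation argument of this type.
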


\begin{rem} \rm \label{rmk-1}
For an MDP, there can be multiple $\sigma$-finite measures $\lambda$ satisfying condition (i) of Theorem~\ref{thm-ac-const} and yielding the conclusions $g^* = \ell_\lambda$ $\lambda$-a.e.\ on different subsets of state space and for possibly different constants $\ell_\lambda$. 
But if two such measures $\lambda_1$ and $\lambda_2$ are not mutually singular, 
then clearly $\ell_{\lambda_1} = \ell_{\lambda_2}$.
It is also obvious from (\ref{eq-ac-ineq}) that if $g^* = \ell_\lambda$ $\lambda$-a.e., then $g^*(x) \leq \ell_{\lambda}$ for all $x$ such that $q(dy \,|\, x ,a)$ is absolutely continuous w.r.t.\ $\lambda$ for some action $a \in A(x)$. 
\qed
\end{rem}

\begin{rem} \rm \label{rmk-conn-cls}
If $\X$ and $\A$ are finite, the reachability condition given in condition (i) of Theorem~\ref{thm-ac-const} is closely related to the notion of a \emph{connected class} introduced by Platzman \cite[Def.\ 2]{Pla77} for a finite state and action MDP.
A connected class is a subset $C$ of states such that for any $x, x' \in C$, it is possible to reach state $x'$ from state $x$ under some policy, whereas no states outside $C$ can be reached from a state in $C$. This implies that, with $\hat \X = C$ and with $\lambda$ having $C$ as its support, condition (i) holds. Conversely, if condition (i) holds, then the support of $\lambda$ is a subset of some connected class. On a connected class, $g^*_1$ is constant \cite[Thm.~2]{Pla77}; if $\X$ consists of a single connected class plus states that are transient under all policies (i.e., the \emph{weakly communicating} case), then $g^*_1$ is constant \cite{Pla77}. These well-known results for finite-space MDPs are consistent with the conclusion of Theorem~\ref{thm-ac-const}. 

However, these prior results just mentioned rely on the fact that in a finite state and action MDP, if $\Pr^\pi_x(\tau_B < \infty) = 1$, then $\E^{\pi}_x[\tau_B] < \infty$, which is in general not true for infinite-space MDPs (cf.\ Example~\ref{ex-const-1} in Section~\ref{sec-3.2.3}).
Thus, despite similarities, there are essential differences between condition (i) of Theorem~\ref{thm-ac-const} for infinite-space MDPs and the notion of a connected class for finite-space MDPs. 
\qed
\end{rem}

The counterpart of Theorem~\ref{thm-ac-const} for the average cost criteria $\tJ^{(i)}$, $1 \leq i \leq 4$, is given below. As can be verified directly by using Theorem~\ref{thm-ac-basic2}, for the \ptmdn and \ptmdp models, and w.r.t.\ any criterion $\tJ^{(i)}$, we have the equality
\begin{equation} \label{eq-ac2-eq}
 \tg^*_i (x) = \inf_{a \in A(x)} \int_\X \tg^*_i(y) \, q(dy \mid x, a), \qquad \forall \, x \in \X;
\end{equation} 
and under additional assumptions, we can have the function $\tg^m_i$ satisfy an inequality like (\ref{eq-ac-ineq}). 
Recall also that in the case of \ptmdn, $\tg^*_i$ and $\tg^m_i$ are bounded from above. The same proof for Theorem~\ref{thm-ac-const} then leads to the following conclusions.

\begin{theorem}[\ptmdn] \label{thm-ac-const2}
For $1 \leq i \leq 4$, let $\tg^\star := \tg^*_i$; or let $\tg^\star := \tg^m_i$ and assume that for every $\epsilon > 0$, there is a Markov policy $\pi \in \Pi_m$ that attains $\tg^m_i$ within $\epsilon$ accuracy.
In either case, if condition (i) of Theorem~\ref{thm-ac-const} holds and $\tg^\star \not\equiv - \infty$ on $\hat \X$, then there is a finite constant $\ell_\lambda$ such that $\tg^\star = \ell_\lambda$ $\lambda$-a.e.\ and $\tg^\star \leq \ell_\lambda$ on $\hat \X$.
\end{theorem}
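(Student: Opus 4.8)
The plan is to reduce the statement to the submartingale argument behind Theorem~\ref{thm-ac-const}, whose only two structural inputs are that the cost function under study obeys the inequality (\ref{eq-ac-ineq}) and is bounded above. Both hold here. Boundedness is immediate: in the \ptmdn model $c$ is real-valued with $c^+ \le K$ for some constant $K$, so $\tJ^{(i)}(\pi,x) \le K$ for every $\pi, x$ and hence $\tg^\star \le K$; this also makes Assumptions~\ref{cond-ac-n1}-\ref{cond-ac-n2} hold trivially (take $M \equiv K$), so the apparatus used for Theorem~\ref{thm-ac-const} is available. By Theorem~\ref{thm-ac-basic2}(i), $\tg^\star$ is lower semianalytic, hence universally measurable; being bounded above, all the integrals $\int_\X \tg^\star(y)\,q(dy \mid x,a)$ appearing below are well defined.

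For the inequality (\ref{eq-ac-ineq}) I would handle the two cases separately. If $\tg^\star = \tg^*_i$, the equality (\ref{eq-ac2-eq}) holds (as noted before the theorem, it is verifiable from Theorem~\ref{thm-ac-basic2}) and in particular gives (\ref{eq-ac-ineq}). If $\tg^\star = \tg^m_i$, I would establish (\ref{eq-ac-ineq}) directly, in the spirit of Lemma~\ref{lem-ineq}. Fix $x$ and $a \in A(x)$; for $\epsilon > 0$ let $\pi^\epsilon = (\mu^\epsilon_0, \mu^\epsilon_1, \ldots) \in \Pi_m$ be the Markov policy attaining $\tg^m_i$ within $\epsilon$ accuracy, which exists by hypothesis, and form $\tilde\pi := (\delta_a, \mu^\epsilon_0, \mu^\epsilon_1, \ldots)$: it plays $a$ at time $0$ and then runs $\pi^\epsilon$, and since each $\mu^\epsilon_n$ depends only on the current state, $\tilde\pi \in \Pi_m$. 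The key point is that each pathwise functional in (\ref{eq-pt-ac1})-(\ref{eq-pt-ac2a}) is invariant under a one-step time shift (a property of the Banach-limit representation mentioned in Remark~\ref{rmk-ac-cri}), so the finite time-$0$ term $c(x,a)$ is irrelevant and, conditioning on $x_1$, $\tJ^{(i)}(\tilde\pi,x) = \int_\X \tJ^{(i)}(\pi^\epsilon, y)\,q(dy \mid x, a) \le \int_\X \tg^m_i(y)\,q(dy \mid x, a) + \epsilon$. Since $\tilde\pi \in \Pi_m$, the left-hand side is at least $\tg^m_i(x)$; letting $\epsilon \downarrow 0$ and then taking the infimum over $a \in A(x)$ yields (\ref{eq-ac-ineq}) for $\tg^m_i$.

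With (\ref{eq-ac-ineq}) and the upper bound in hand, I would run the submartingale argument verbatim. Inequality (\ref{eq-ac-ineq}) gives, under any policy $\pi$, $\E^\pi[\tg^\star(x_{n+1}) \mid x_0,a_0,\ldots,x_n,a_n] = \int_\X \tg^\star(y)\,q(dy \mid x_n,a_n) \ge \tg^\star(x_n)$ because $a_n \in A(x_n)$, so $\{\tg^\star(x_n)\}$ is a submartingale bounded above by $K$. The crux is the following reachability lemma: if $r \in \R$ and $\lambda(\{\tg^\star \le r\} \cap \hat\X) > 0$, then $\tg^\star \le r$ on all of $\hat\X$. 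To prove it, choose a Borel set $B \subset \{\tg^\star \le r\} \cap \hat\X$ with $\lambda(B) > 0$ (possible since $\{\tg^\star \le r\}$ is universally measurable), and for a given $x \in \hat\X$ take $\pi_B$ with $\Pr_x^{\pi_B}(\tau_B < \infty) = 1$ from condition (i); optional stopping for the bounded-above submartingale, passing to the limit in $\E_x^{\pi_B}[\tg^\star(x_{\tau_B \wedge n})]$ by reverse Fatou (using $\tau_B < \infty$ a.s.), gives $\tg^\star(x) \le \E_x^{\pi_B}[\tg^\star(x_{\tau_B})] \le r$ since $x_{\tau_B} \in B$. Now set $\ell_\lambda := \inf\{r \in \R \mid \lambda(\{\tg^\star \le r\} \cap \hat\X) > 0\}$: boundedness above gives $\ell_\lambda \le K < \infty$, while $\tg^\star \not\equiv -\infty$ on $\hat\X$ forces $\ell_\lambda > -\infty$ (otherwise the lemma would yield $\tg^\star \equiv -\infty$ on $\hat\X$). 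For every $r > \ell_\lambda$ the lemma gives $\tg^\star \le r$ on $\hat\X$, hence $\tg^\star \le \ell_\lambda$ on $\hat\X$; for $r < \ell_\lambda$ we have $\lambda(\{\tg^\star \le r\} \cap \hat\X) = 0$, so $\tg^\star \ge \ell_\lambda$ $\lambda$-a.e. Since $\lambda(\hat\X^c) = 0$, this gives $\tg^\star = \ell_\lambda$ $\lambda$-a.e.\ and $\tg^\star \le \ell_\lambda$ on $\hat\X$.

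I expect the main obstacle to be the inequality (\ref{eq-ac-ineq}) for $\tg^m_i$: one must make the concatenated policy $\tilde\pi$ universally measurable while keeping it Markov, verify the one-step shift-invariance of the functionals $\tJ^{(3)}$ and $\tJ^{(4)}$ (not just the plain $\limsup/\liminf$ cases, where it is transparent), and carry the $\epsilon$-accuracy clause $-\epsilon^{-1}$ through the limit $\epsilon \downarrow 0$ on the set where $\tg^m_i(y) = -\infty$ with positive $q(\cdot \mid x,a)$-measure. A secondary technical point is the optional-stopping step when $\tg^\star$ may equal $-\infty$; this is exactly where boundedness above is indispensable, since it is what lets reverse Fatou bound $\limsup_n \E_x^{\pi_B}[\tg^\star(x_{\tau_B \wedge n})]$ from above.
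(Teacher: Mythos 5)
Your proposal is correct, and its first half coincides with what the paper does: the paper likewise reduces Theorem~\ref{thm-ac-const2} to the submartingale machinery by observing that $\tg^\star$ is bounded above in the \ptmdn\ model, invokes the equality (\ref{eq-ac2-eq}) for the case $\tg^\star = \tg^*_i$, and proves the inequality (\ref{eq-thm2-ac-prf1}) for $\tg^\star = \tg^m_i$ exactly as you propose---its Lemma~\ref{lem-ineq2} concatenates $\delta_a$ (patched off the state $x$ by a fixed stationary kernel, the measurability point you flagged) with the $\epsilon$-accurate Markov policy, uses that the resulting policy is still Markov so its cost dominates $\tg^m_i(x)$, verifies $\tJ^{(i)}(\pi,x) = \int_\X \tJ^{(i)}(\pi^\epsilon,y)\,q(dy \mid x,a)$ directly, and handles the $-\epsilon^{-1}$ clause as in Lemma~\ref{lem-ineq}. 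Where you genuinely diverge is the endgame. The paper simply says the proof of Theorem~\ref{thm-ac-const} ``carries through'' with $\tg^\star$ in place of $g^*$: that proof derives constancy by contradiction, using Lemma~\ref{lem-reachability-cond} (a strengthening of condition (i) to a \emph{single} policy $\pi_B$ with $\Pr^{\pi_B}_x(\tau_B < \infty) = 1$ for all $x \in \hat\X$ simultaneously, itself obtained via an auxiliary \ptmdn\ MDP and the selection result of Theorem~\ref{thm-ac-basic2}(ii)) to build a switching policy that visits two level sets infinitely often, contradicting the a.s.\ convergence of the submartingale (Lemma~\ref{lem-subm}); separate optional-stopping arguments then rule out $-\infty$ on a $\lambda$-positive set and give $\tg^\star \leq \ell_\lambda$ on $\hat\X$. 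You instead prove one level-set lemma---$\lambda(\{\tg^\star \leq r\} \cap \hat\X) > 0$ implies $\tg^\star \leq r$ on all of $\hat\X$---by bounded-time optional sampling plus reverse Fatou, legitimate here precisely because $\tg^\star \leq K$, and then define $\ell_\lambda$ as the infimum of such $r$; your remaining steps are sound (integrability of $\tg^\star(x_n)$ when $\tg^\star(x_0) > -\infty$ follows from the upper bound plus $\E[\tg^\star(x_n)] \geq \tg^\star(x_0)$, and $\{\tg^\star < \ell_\lambda\} \cap \hat\X$ is a countable union of $\lambda$-null level sets). This needs only the pointwise, per-$x$ form of condition (i), so you bypass Lemma~\ref{lem-reachability-cond}, the switching-policy construction, and the martingale convergence theorem altogether---a real simplification. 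The trade-off is that your shortcut is specific to the bounded-above setting of this theorem and would not prove Theorem~\ref{thm-ac-const} itself, where $g^*$ need not be bounded above and the uniform-integrability and a.s.-convergence arguments become essential; the paper's heavier route buys a single argument that covers both theorems.
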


\begin{rem} %\rm
(a) For the case $\tg^\star = \tg^*_i$, instead of using submartingale-based arguments, one can prove the preceding theorem directly, by considering those policies that act in a nearly optimal way once the system reaches a set $B$ involved in the assumption. The proof is a straightforward combination of the almost sure finiteness of $\tau_B$ with the sample-path based definition of the average cost criterion $\tJ^{(i)}$.
Such a direct proof does not work for the case $\tg^\star = \tg^m_i$, however. The reason is that $\tg^m_i$ is defined over Markov policies, whereas $\tau_B$ is a random time and a policy that acts in a certain manner from the time $\tau_B$ onwards is, in general, not Markov but history-dependent. \smallskip

\noindent (b) 
If the functions $g^s_i$ and $\tg^s_i$ satisfy an inequality like (\ref{eq-ac-ineq}), then the results of this subsection can be applied to them as well.
\qed
\end{rem}

\subsubsection{Results under a Recurrence Condition involving a Stationary Policy}

Sometimes it would be convenient to have a single stationary policy $\mu$ fulfill the roles of all the policies $\pi_B$ required by condition (i) of Theorem~\ref{thm-ac-const}. The rich theory of Markov chains can be brought to bear to study this special case, and the purpose of the present subsection is to demonstrate this point. Our discussion below will involve some concepts and standard terminology for Markov chains; for their definitions, we refer the reader to Appendix~\ref{appsec-mc} and the references therein.

We start with a lemma that is important for the study of universally measurable stationary policies in general. 
A universally measurable stationary policy $\mu \in \Pi_s$ induces a Markov chain $\{x_n\}$ on $\big(\X, \U(\X)\big)$. If $\X$ is uncountably infinite, the cardinality of $\U(\X)$ is larger than $\B(\X)$ \cite[Appendix~B.5]{bs}, so, unlike $\B(\X)$, $\U(\X)$ is not countably generated. 
Many theorems for irreducible Markov chains were proved for state spaces with countably generated $\sigma$-algebras. 
The following result from Orey~\cite{Ore71} allows us to apply them to Markov chains on $(\X, \U(\X))$.

Recall that the \emph{transition probability function} of a Markov chain on a measure space $(X, \Sigma)$ is a function $P(\cdot \,|\, \cdot)$ defined on $\Sigma \times X$ such that $P(\cdot \,|\, x)$ is a probability measure on $\Sigma$ for each $x \in X$ and $P(E\,|\, \cdot)$ is measurable w.r.t.\ $\Sigma$ for each $E \in \Sigma$.

\begin{lem}[cf.\ {\cite[Prop.\ 1.3]{Ore71}}] \label{lem-sub-sigalg}
Let $P$ be a transition probability function on a measure space $(X, \Sigma)$. Then, for any countable family $\{ E_n\} \subset \Sigma$, there exists a countably generated $\sigma$-algebra $\cE$ such that $\{E_n\} \subset \cE \subset \Sigma$ and for each $E \in \cE$, $P(E \,|\,\cdot)$ is measurable w.r.t.~$\cE$; in other words, $P$ restricted to $\cE \times X$ is a transition probability function on the measure space $(X, \cE)$.
\end{lem}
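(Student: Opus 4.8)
The plan is to build $\cE$ by an iterative closure procedure and then verify the required measurability via a Dynkin ($\pi$--$\lambda$) argument. The difficulty is inherently circular: I need a single countably generated $\sigma$-algebra $\cE$ such that the map $E \mapsto P(E\,|\,\cdot)$ sends \emph{every} $E \in \cE$ to an $\cE$-measurable function, yet whether $P(E\,|\,\cdot)$ is $\cE$-measurable depends on which level sets $\cE$ already contains. I would break this circularity by iterating: closure under the level-set operation is to be achieved only in the limit, not at any finite stage.

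First I would set up the iteration. Let $\mathcal{F}_0$ be the (countable) algebra generated by the given family $\{E_n\}$, and put $\cE_0 = \sigma(\mathcal{F}_0)$, a countably generated sub-$\sigma$-algebra of $\Sigma$. Given a countable generating algebra $\mathcal{F}_k$ with $\cE_k = \sigma(\mathcal{F}_k)$, I would adjoin, for each $G \in \mathcal{F}_k$ and each rational $r$, the level set $\{x : P(G\,|\,x) \le r\}$; each such set lies in $\Sigma$ since $P(G\,|\,\cdot)$ is $\Sigma$-measurable. Let $\mathcal{F}_{k+1}$ be the algebra generated by $\mathcal{F}_k$ together with all these (countably many) level sets, and set $\cE_{k+1} = \sigma(\mathcal{F}_{k+1})$. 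Finally I would take $\mathcal{F} = \bigcup_k \mathcal{F}_k$ and $\cE = \sigma(\mathcal{F})$. As a countable union of countable families is countable, $\mathcal{F}$ is countable and $\cE$ is countably generated; it contains $\{E_n\} \subset \mathcal{F}_0$ and is contained in $\Sigma$ by construction.

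Next I would verify the measurability property. The heart of the matter is that $\mathcal{F}$ is a $\pi$-system (indeed an algebra) generating $\cE$, and every generator is ``good.'' Indeed, for $G \in \mathcal{F}$ we have $G \in \mathcal{F}_k$ for some $k$, so all level sets $\{x : P(G\,|\,x) \le r\}$ with $r \in \mathbb{Q}$ belong to $\mathcal{F}_{k+1} \subset \cE$; since a real-valued function is $\cE$-measurable once its level sets at rational thresholds lie in $\cE$, the function $P(G\,|\,\cdot)$ is $\cE$-measurable. Now set $\mathcal{H} := \{E \in \cE : P(E\,|\,\cdot) \text{ is } \cE\text{-measurable}\}$. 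Because $P(\cdot\,|\,x)$ is a probability measure for each fixed $x$, the collection $\mathcal{H}$ is a $\lambda$-system: it contains $X$; it is closed under proper differences, as $P(F \setminus E\,|\,x) = P(F\,|\,x) - P(E\,|\,x)$ when $E \subset F$; and it is closed under increasing unions, since $E_n \uparrow E$ gives $P(E_n\,|\,x) \uparrow P(E\,|\,x)$ pointwise, exhibiting $P(E\,|\,\cdot)$ as a pointwise limit of $\cE$-measurable functions. Having shown $\mathcal{F} \subset \mathcal{H}$, Dynkin's theorem yields $\cE = \sigma(\mathcal{F}) \subset \mathcal{H}$, i.e.\ $P(E\,|\,\cdot)$ is $\cE$-measurable for every $E \in \cE$, which is the claim.

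I expect the main obstacle to be organizing the iteration cleanly, so that closure under the level-set operation holds in $\cE$ even though it holds at no finite stage $\cE_k$, while keeping every $\mathcal{F}_k$ countable. The subsequent passage from generators to all of $\cE$ is then routine, once one notes that the good sets form a $\lambda$-system --- a fact that hinges precisely on $P(\cdot\,|\,x)$ being a genuine measure in its first argument. Two minor points to confirm along the way, both standard, are that checking rational thresholds suffices for real-valued $\cE$-measurability and that $\mathcal{F}$ is genuinely a $\pi$-system generating $\cE$.
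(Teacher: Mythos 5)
The paper offers no proof of this lemma at all --- it is quoted directly from Orey \cite[Prop.~1.3]{Ore71} --- and your argument is correct and is essentially the standard proof of that cited result: iteratively adjoin the rational-threshold level sets of $P(G\,|\,\cdot)$ for a countable generating algebra, pass to the union $\mathcal{F}=\bigcup_k \mathcal{F}_k$, and upgrade from the generators to all of $\sigma(\mathcal{F})$ by a $\pi$--$\lambda$ good-sets argument, which is exactly where the measure property of $P(\cdot\,|\,x)$ is needed. All the steps check out, including the two points you flag as needing confirmation (rational thresholds suffice for the $[0,1]$-valued function $P(G\,|\,\cdot)$, and the increasing union of the countable algebras $\mathcal{F}_k$ is a countable $\pi$-system generating $\cE$), so nothing further is required.
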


Based on Lemma~\ref{lem-sub-sigalg}, for a Markov chain $\{x_n\}$ induced by a policy $\mu \in \Pi_s$, we can consider its transition probability function $P_\mu$ restricted to $\cE_\mu(\X) \times \X$, for a countably generated $\sigma$-algebra $\cE_\mu(\X)$ (which depends on $\mu$) such that $\B(\X) \subset \cE_\mu(\X) \subset \U(\X)$. Using this and a convenient fact about completion of measures (cf.\ Lemma~\ref{lem-umset} and Remark~\ref{rmk-lem-umset}), 
we can show that the theory developed for irreducible Markov chains on state spaces with countably generated $\sigma$-algebras can be applied to irreducible Markov chains on $(\X, \U(\X))$ (see Section~\ref{sec-5.2} for details). 

Now, returning to the problem of our interest, we provide a special case of condition (i) of Theorem~\ref{thm-ac-const} in the next lemma, which is derived from the relationship between recurrent Markov chains and Harris recurrent Markov chains. 

\begin{lem} \label{lem-ac-cond-mc}
Let $\mu \in \Pi_s$ and consider the Markov chain $\{x_n\}$ on $\big(\X, \U(\X)\big)$ induced by $\mu$.
Suppose that a set $\tilde \X \in \U(\X)$ is absorbing and indecomposable and,
restricted to $\tilde \X$, $\{x_n\}$ is $\psi$-irreducible and recurrent, where $\psi$ is a maximal irreducibility measure. 
Then, on $\tilde \X$, the restricted Markov chain $\{x_n\}$ has a unique maximal Harris set $\bar H$; condition (i) of Theorem~\ref{thm-ac-const} holds with $\lambda = \psi$ and $\hat \X = \bar H$.
\end{lem}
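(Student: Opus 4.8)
The plan is to transfer the classical decomposition theory of $\psi$-irreducible recurrent chains to the chain induced by $\mu$ on $(\tilde\X, \U(\X))$, extract from it the maximal Harris set $\bar H$, and then read off condition~(i) of Theorem~\ref{thm-ac-const} directly from Harris recurrence, using the single stationary policy $\pi_B = \mu$ for \emph{every} $B$. The one genuinely new ingredient relative to the textbook theory is that $\U(\X)$ is not countably generated, so the standard results cannot be applied verbatim; I would first reduce to a countably generated $\sigma$-algebra. Since $\X$ is a Borel space, $\B(\X)$ is countably generated, so fixing a countable generating family and applying Lemma~\ref{lem-sub-sigalg} to the transition probability function $P_\mu$ produces a countably generated $\cE_\mu$ with $\B(\X) \subset \cE_\mu \subset \U(\X)$ on which $P_\mu$ is again a transition probability function. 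Using that $\tilde\X$ is absorbing, the restricted kernel on $(\tilde\X, \cE_\mu)$ is a well-defined Markov chain that remains $\psi$-irreducible and recurrent; on this countably generated space the classical theory applies (cf.\ Appendix~\ref{appsec-mc}, \cite{Ore71}, and the references therein).

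Next I would invoke the maximal Harris set decomposition for a recurrent $\psi$-irreducible chain: there exists an absorbing set $\bar H \in \cE_\mu$ with $\psi(\bar H^c) = 0$ such that the chain restricted to $\bar H$ is Harris recurrent, i.e.\ for every $B \in \cE_\mu$ with $B \subset \bar H$ and $\psi(B) > 0$ and every $x \in \bar H$, the chain enters $B$ infinitely often $\Pr_x^\mu$-a.s., and in particular $\Pr_x^\mu(\tau_B < \infty) = 1$. The uniqueness of $\bar H$ up to a $\psi$-null set is part of this decomposition, and the indecomposability of $\tilde\X$ excludes a competing absorbing subset of positive and co-positive $\psi$-measure, which pins $\bar H$ down as \emph{the} maximal Harris set.

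It then remains to match the conclusion with the hypotheses of Theorem~\ref{thm-ac-const}. I would take $\lambda := \psi$, which is nontrivial (as $\psi \neq 0$) and $\sigma$-finite, and view it on $\U(\X)$ through its completion (legitimate because, after normalizing $\psi$ to an equivalent probability measure, $\U(\X) \subset \B_\psi(\X)$); then $\lambda$ is a measure on $\U(\X)$ with $\lambda(\bar H^c) = 0$, and I set $\hat\X := \bar H \in \U(\X)$. For any Borel set $B \subset \hat\X$ with $\lambda(B) > 0$ we have $B \in \B(\X) \subset \cE_\mu$, so the Harris recurrence of the restricted chain yields $\Pr_x^\mu(\tau_B < \infty) = 1$ for every $x \in \hat\X$; choosing $\pi_B = \mu$ then verifies condition~(i).

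The step I expect to be the main obstacle is the first reduction, not the Markov-chain decomposition (which can be cited as a black box). One must verify that the hitting and recurrence events computed for the reduced kernel $P_\mu$ on $(\tilde\X, \cE_\mu)$ genuinely coincide with the corresponding events under the strategic measure $\Pr_x^\mu$ on $\U(\Omega)$ --- so that restricting the $\sigma$-algebra does not alter the probabilities $\Pr_x^\mu(\tau_B < \infty)$ for the Borel sets $B$ at issue --- and that the maximal irreducibility measure and the Harris set obtained at the $\cE_\mu$ level retain their properties once extended to $\U(\X)$. This is precisely where the completion-of-measures fact (Lemma~\ref{lem-umset} and Remark~\ref{rmk-lem-umset}) and the arguments outlined in Section~\ref{sec-5.2} are needed.
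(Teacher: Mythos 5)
Your proposal is correct and follows essentially the same route as the paper: use Lemma~\ref{lem-sub-sigalg} to obtain a countably generated $\sigma$-algebra $\cE_\mu$ with $\B(\X) \subset \cE_\mu \subset \U(\X)$, invoke the classical decomposition theory for $\psi$-irreducible recurrent chains to extract the unique maximal Harris set $\bar H$, transfer irreducibility, maximality of $\psi$, and the Harris property between the two $\sigma$-algebra levels via the completion fact (Lemma~\ref{lem-umset}), and verify condition~(i) of Theorem~\ref{thm-ac-const} with $\lambda = \psi$, $\hat\X = \bar H$, and $\pi_B = \mu$ for every $B$. The only details worth making explicit are that $\tilde\X$ itself (which need not be Borel) should be placed in the countable family fed to Lemma~\ref{lem-sub-sigalg} --- the paper takes $E_0 = \tilde\X$ --- so that the restriction to $(\tilde\X, \cE_\mu(\tilde\X))$ is well defined, and that the paper's one mechanical difference is to lift the minorization condition to the level of $\U(\tilde\X)$ (Lemma~\ref{lem-small-fn}) and apply the decomposition theorems there directly, rather than applying them at the $\cE_\mu$ level and transferring the conclusions upward as you do; these are equivalent bookkeeping choices.
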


\begin{prop}[\mdn, \ptmdn; special cases of Theorems~\ref{thm-ac-const}-\ref{thm-ac-const2}] \label{prp-ac-const-mc}
Make the following replacements in the conditions of Theorems~\ref{thm-ac-const}-\ref{thm-ac-const2}: 
\begin{enumerate}[leftmargin=0.65cm,labelwidth=!]
\item[\rm (a)] replace condition (i) of Theorem~\ref{thm-ac-const} with the existence of a stationary policy $\mu$ and a set $\tilde \X$ satisfying the condition of Lemma~\ref{lem-ac-cond-mc}; and 
\item[\rm (b)] replace the condition that $g^*, \tg^\star \not\equiv - \infty$ on $\hat \X$ with the condition that $g^*, \tg^\star \not\equiv - \infty$ on $\tilde \X$. 
\end{enumerate}
Then the conclusions of Theorems~\ref{thm-ac-const}-\ref{thm-ac-const2} 
hold for $\lambda = \psi$ and $\hat \X = \bar H$, where $\psi$ and $\bar H$ are as in Lemma~\ref{lem-ac-cond-mc}. 
\end{prop}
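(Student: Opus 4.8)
The plan is to read off from Lemma~\ref{lem-ac-cond-mc} that, under the replacement~(a), condition~(i) of Theorem~\ref{thm-ac-const} holds verbatim with $\lambda = \psi$ and $\hat\X = \bar H$, and then to invoke Theorems~\ref{thm-ac-const}--\ref{thm-ac-const2} directly. Two bookkeeping items must be settled before those theorems apply: that $\psi(\bar H^c)=0$ (the hypothesis $\lambda(\hat\X^c)=0$), and that condition~(ii), namely $g^*\not\equiv-\infty$ (resp.\ $\tg^\star\not\equiv-\infty$) on $\bar H$, is recovered from the weaker replacement~(b), which only asserts non-triviality on the larger set $\tilde\X$. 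The first item is immediate from the theory of recurrent $\psi$-irreducible chains: since $\psi$ is a maximal irreducibility measure for the chain restricted to the absorbing set $\tilde\X$ and $\bar H$ is its maximal Harris set, one has $\psi(\tilde\X^c)=0$ and $\psi(\tilde\X\setminus\bar H)=0$, whence $\psi(\bar H^c)=0$. The real work is the second item, which I isolate as the main obstacle.

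To upgrade non-triviality from $\tilde\X$ to $\bar H$, I would argue by contradiction along the chain $\{x_n\}$ generated by the stationary policy $\mu$. Suppose $g^*(x_0)>-\infty$ for some $x_0\in\tilde\X$ (replacement~(b)) while $g^*\equiv-\infty$ on $\bar H$. Let $P_\mu$ be the transition kernel of $\mu$ and $\nu_n$ the law of $x_n$ under $\Pr^\mu_{x_0}$; since $\tilde\X$ is absorbing, each $\nu_n$ is carried by $\tilde\X$. Integrating (\ref{eq-ac-ineq}) in the form $g^*(x)\le\int_\X g^*\,dP_\mu(dy\mid x)$ (valid because $\mu(\cdot\mid x)$ is supported on $A(x)$) against $\nu_n$, and applying Tonelli to the positive and negative parts separately, shows that $n\mapsto\int g^*\,d\nu_n$ is nondecreasing, so $\int g^*\,d\nu_n\ge g^*(x_0)>-\infty$ for every $n$. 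The interchange causes no $+\infty-\infty$ ambiguity because, under Assumptions~\ref{cond-ac-n1}--\ref{cond-ac-n2}, the positive part obeys $(g^*)^+\le M$ (the policy of Assumption~\ref{cond-ac-n2}(a) witnesses $g^*\le M$), and hence $\int (g^*)^+\,d\nu_n\le\E^\mu_{x_0}[M(x_n)]<\infty$ for all $n$ by Assumption~\ref{cond-ac-n2}(b); in the \ptmdn case $g^*$ is bounded above by a constant, so this is automatic.

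Now $\psi$-irreducibility of the restricted chain, together with $\psi(\bar H)=\psi(\tilde\X)>0$, yields an index $n$ with $\nu_n(\bar H)=P_\mu^{\,n}(\bar H\mid x_0)>0$. For that $n$, the hypothesis $g^*\equiv-\infty$ on $\bar H$ forces $\int(g^*)^-\,d\nu_n=+\infty$; combined with $\int(g^*)^+\,d\nu_n<\infty$ this gives $\int g^*\,d\nu_n=-\infty$, contradicting $\int g^*\,d\nu_n\ge g^*(x_0)>-\infty$. Hence $g^*\not\equiv-\infty$ on $\bar H$, and Theorem~\ref{thm-ac-const} applies to deliver conclusions~(a)--(b) with $\lambda=\psi$ and $\hat\X=\bar H$.

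The $\tJ^{(i)}$ statements follow the same template. For $\tg^\star=\tg^*_i$ one uses the equality (\ref{eq-ac2-eq}), which supplies the same one-step inequality under $\mu$; for $\tg^\star=\tg^m_i$ one uses the inequality of the type (\ref{eq-ac-ineq}) guaranteed by the standing assumption that some Markov policy attains $\tg^m_i$ within every accuracy $\epsilon>0$. In both \ptmdn cases $\tg^\star$ is bounded above, so integrability of the positive part is free and the monotone-integral argument transfers unchanged, after which Theorem~\ref{thm-ac-const2} gives the conclusion. The only delicate point throughout is the bookkeeping with the convention $+\infty-\infty=+\infty$: it is precisely the bound $(g^*)^+\le M$ (equivalently, boundedness above in the $\widetilde{\,\cdot\,}$ models) that makes the terminal contradiction rigorous, so I would flag the verification of $\int(g^*)^+\,d\nu_n<\infty$ as the step most in need of care.
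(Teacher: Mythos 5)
Your proposal is correct, and it isolates exactly the single obstacle that the paper's own proof isolates --- upgrading $g^*, \tg^\star \not\equiv -\infty$ from $\tilde \X$ to $\bar H$ (together with the bookkeeping $\psi(\bar H^c)=0$) --- but you resolve that obstacle by a genuinely different mechanism. The paper's argument is a one-step structural one: by (\ref{eq-ac-ineq}) under $\mu$ (with integrability furnished by $[g^*]_+ \leq M$ and Assumption~\ref{cond-ac-n2}(b), just as in your check), the set $E = \{x \in \X \mid g^*(x) > -\infty\}$ is absorbing for $P_\mu$, hence $E \cap \tilde \X$ is a nonempty closed subset of $\tilde \X$; since $\bar H$ is also closed and $\tilde \X$ is \emph{indecomposable}, the two cannot be disjoint, so $g^* \not\equiv -\infty$ on $\bar H$. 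You instead iterate the same one-step inequality along the chain: the expectations $\int g^* \, d\nu_n$ are nondecreasing and bounded below by $g^*(x_0) > -\infty$, while $\psi$-irreducibility together with $\psi(\bar H) = \psi(\tilde \X) > 0$ (via $\psi(\tilde \X \setminus \bar H) = 0$, which the paper extracts from Nummelin's theory inside the proof of Lemma~\ref{lem-ac-cond-mc}) yields an $n$ with $\nu_n(\bar H) > 0$ and hence $\int g^* \, d\nu_n = -\infty$, a contradiction. Both routes rest on the identical one-step inequality under $\mu$ --- equation (\ref{eq-ac2-eq}) for $\tg^*_i$ and Lemma~\ref{lem-ineq2} for $\tg^m_i$, as you correctly note --- and on the same integrability verification $\int (g^*)^+ \, d\nu_n < \infty$, which you rightly flag as the delicate point given the paper's $+\infty-\infty$ conventions. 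What the paper's route buys is economy: it uses only \emph{absorbing plus indecomposable} and never needs to quantify hitting probabilities or invoke irreducibility for this step. What your route buys is that it is the expectation form of the submartingale mechanism already driving Theorem~\ref{thm-ac-const}, so it would survive if indecomposability of $\tilde \X$ were weakened to an irreducibility-type hypothesis guaranteeing $\psi(\bar H) > 0$; your explicit verification that $\psi(\bar H^c) = 0$ (so $\lambda(\hat\X^c)=0$ holds for $\lambda = \psi$, $\hat \X = \bar H$) is a point the paper folds silently into Lemma~\ref{lem-ac-cond-mc} and is worth making explicit as you do.
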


\begin{rem} \rm \label{rmk-mc-related}
(a) In the context of Prop.~\ref{prp-ac-const-mc}, 
the recurrent Markov chain $\{x_n\}$ on $\tilde \X$ has a unique (up to constant multiples) invariant measure $\rho$, which is equivalent to $\psi$ \cite[Cor.~5.2 and Prop.~5.6]{Num84}.
So one of the conclusions of Prop.~\ref{prp-ac-const-mc}
can also be stated as that $g^*$ or $\tg^\star$ is a constant $\rho$-a.e.
\smallskip

\noindent (b) If we assume $g^*$ is bounded above on $\tilde \X$, 
then instead of relying on Theorem~\ref{thm-ac-const}, Prop.~\ref{prp-ac-const-mc} can also be proved directly by using well-known properties of superharmonic functions for Harris recurrent Markov chains. This is true for the case of $\tg^\star$ as well, since for the \ptmdn model, $\tg^\star$ is bounded above. Another related comment is that if a stationary policy $\mu$ induces a Harris recurrent Markov chain $\{x_n\}$ on $\X$ and the following equality holds for a bounded function $g = g^*$ or $\tg^\star$,
$$g(x) =  \inf_{x \in A(x)} \int_\X g(y) \, q(dy \mid x, a) = 
\int_\A \int_\X g(y) \, q(dy \mid x, a) \, \mu(da \mid x), \qquad \forall \, x \in \X,$$ 
then $g^*$ or $\tg^\star$ is a constant, by the well-known fact that for a Harris recurrent Markov chain, a bounded harmonic function must be a constant \cite[Thm.~3.8(i)]{Num84}. (See Section~\ref{sec-5.2} for the details of these comments.) 
\qed
\end{rem}

\begin{rem} \rm \label{rmk-mc-related2}
We now discuss Hopf's decomposition and several other results from Markov chain theory, in connection with the preceding results, to provide a better context for understanding the recurrence condition of Lemma~\ref{lem-ac-cond-mc} and the reachability condition (i) of Theorem~\ref{thm-ac-const} and to help illustrate their broad applicability. 
Consider an arbitrary nontrivial $\sigma$-finite measure $\phi$ on $\U(\X)$ and an arbitrary stationary policy, or more generally, any policy $\pi \in \Pi$ that induces an embedded (time-homogenous) Markov chain on $\X$ at certain stopping times $\tau_m, m \geq 0$. For the resulting Markov chain $\{\hat x_m\}$, let $\hat P$ be its transition probability function. One can construct from $\hat P$ and $\phi$ a $\sigma$-finite measure $\hat \phi$ such that both $\phi$ and $\int \hat P(\cdot \,|\, x) \, \hat \phi(dx)$ are absolutely continuous w.r.t.\ $\hat \phi$ \cite[p.~10]{Num84}. 
With respect to $\hat \phi$ and the Markov chain $\{\hat x_m\}$, the state space $(\X, \U(\X))$ can be partitioned into a dissipative part $\X_d$ and a conservative part $\X_c$, known as Hopf's decomposition \cite[Thm.~3.5]{Num84}: on $\X_d$ the Markov chain is dissipative, whereas $\X_c$ is absorbing and, in the case $\hat \phi(\X_c) > 0$, $\hat \phi$-conservative (see \cite[Def.~3.4]{Num84} for definition). In the latter case, on any absorbing, indecomposable subset $\hat \X_c \subset \X_c$ with $\hat \phi(\hat \X_c) > 0$, the Markov chain is irreducible and recurrent with a maximal irreducibility measure given by the restriction of $\hat \phi$ to $\hat \X_c$ \cite[Thm.~3.6 and Prop.~3.11]{Num84}. This shows that one can use any $\sigma$-finite measure $\phi$ and any stationary or non-stationary policy of the structure discussed above to try to identify subsets of $\X$ on which $g^*$ or $\tg^\star$ is almost everywhere constant.
\qed
\end{rem}

\subsubsection{Illustrative Examples and Further Discussion} \label{sec-3.2.3}

We start with a simple countable-state example. 

\begin{example} \label{ex-const-1}
Consider a countable-state Markov chain example from \cite[Chap.\ 11.1, p.~259]{MeT09} as an uncontrolled MDP: 
the state space $\X = \{0, 1, 2, \ldots\}$; state $0$ is absorbing; and
from state $k \geq 1$, the system moves to state $0$ with probability $\beta_k > 0$ and to state $k+1$ with probability $1 - \beta_k$.
This Markov chain is positive recurrent with $0$ being the only recurrent state.

Let us set $\beta_k = 1/(k+1)$ and define the one-stage costs to be $c(k) = k$ for $k \geq 0$. 
Then $\E_0 [ c(x_n)] = 0$ and $\E_k [ c(x_n)] = \big( \prod_{j=k}^{k+n-1} (1 - \beta_j) \big) \cdot (k+n) = k$ for $k \geq 1$ and $n \geq 0$. 
So Assumptions~\ref{cond-ac-n1}-\ref{cond-ac-n2} are satisfied, and for any criterion $J^{(i)}$, 
the optimal average costs are given by $g^*(k) = k$, $k \geq 0$.

Condition~(i) of Theorem~\ref{thm-ac-const} holds for $\hat \X = \X$ and $\lambda = \delta_0$: indeed, 
from state $k \geq 1$, the probability of never reaching state $0$ is $\prod_{j=k}^\infty (1 - \beta_j) = 0$, since $\sum_{j \geq k} \beta_j = +\infty$.
The Markov chain is, in fact, positive Harris recurrent.  
However, the expected time to reach state $0$ is infinite: with $\tau_0 : = \min \{ n \geq 0 \mid x_n = 0 \}$, we have $\E_k [ \tau_{0} ] = \sum_{n=0}^\infty \prod_{j=k}^{k+n-1} (1 - \beta_j) = \sum_{n=0}^\infty k/(k+n) = + \infty$. This manifests a key difference between condition (i) of Theorem~\ref{thm-ac-const} for infinite-space MDPs and the condition that defines a weakly communicating MDP in the finite state and action case.

We have $\ell_\lambda =  g^*(0) = 0$ and $g^*(k) > \ell_\lambda$ for $k \geq 1$. 
It is not surprising that $g^*(k) \not\leq \ell_\lambda$ on $\hat \X$, since the uniform integrability condition in Theorem~\ref{thm-ac-const}(b) does not hold in this case: for any $t > 0$ and $k \geq 1$, we have 
$$\textstyle{ \sup_{n \geq 0} \E_k \big[ g^*(x_n) \, \ind (g^*(x_n) \geq t) \big] = \sup_{n \geq 0} \big( \prod_{j=k}^{k+n-1} (1 - \beta_j) \big) \cdot (k+n) \, \ind (k+n \geq t) = k,}$$ 
so $\{g^*(x_n)\}$ are not uniformly integrable if the initial state $x_0 = k \geq 1$. 

If $\sup_{k \geq 1} c(k) < + \infty$ instead, then according to Theorem~\ref{thm-ac-const} or Prop.~\ref{prp-ac-const-mc}, $g^*(k) \leq \ell_\lambda = 0$ and hence $g^* \equiv 0$. This is indeed the case, as can be verified by a direct calculation. \qed
\end{example}

Main approaches to study average-cost MDPs on infinite-spaces include the vanishing discount factor approach, the fixed point approach, and the linear programming (LP) approach (see e.g., \cite{HL96,HL99,VAm03,VAm18}). These approaches aim directly at finding stationary optimal policies. Some of the conditions they require are much stronger than those needed in applying our results to study the structure of the optimal average cost functions. 

For example, for the vanishing discount factor approach to work, a family of discounted relative value functions needs to satisfy a certain pointwise boundedness condition, which fails to hold for the MDP in Example~\ref{ex-const-1}, even if we set $c(k) =1$ for $k \geq 1$ so that $g^* \equiv 0$ (for details, see the discussion in \cite[Example 3.1]{Yu-minpair19}). The fixed point approach requires the Markov chains induced by the nonrandomized stationary policies to be uniformly $w$-geometrically ergodic w.r.t.\ a certain weight function $w$ (cf.\ \cite{HL99,VAm03}). By contrast, to apply Prop.~\ref{prp-ac-const-mc}, we only need a recurrent Markov chain induced by some policy on a subset of states, and this Markov chain can be null recurrent (cf.\ the subsequent Example~\ref{ex-const-3}).

Regarding the LP approach for infinite-space MDPs, a large part of the theory relies on the existence of stationary policies that can induce positive recurrent Markov chains (cf.\ \cite{Bor88,HoL94} and \cite[Chap.\ 12]{HL99}), and it cannot handle null recurrent Markov chains. By contrast, for a countable-state MDP with bounded one-stage costs, if there exists a policy inducing a null recurrent Markov chain w.r.t.\ the counting measure, one can immediately conclude that $g^*$ is constant on $\X$. Moreover, even in the positive recurrent case, transient states can be a thorny issue for applying the LP approach, as the example below demonstrates. 

\begin{example}[feasibility issue in linear programs for infinite-space MDPs] \label{ex-const-2}
Consider again the countable-state, finite-action MDP in the previous example; the values of $c(k)$ are unimportant for this discussion.
Hordijk and Lasserre \cite{HoL94} showed that under suitable conditions, a stationary average-cost optimal policy can be constructed from an optimal solution of a linear program. For the MDP in our example, that linear program involves two variables $\gamma$ and $\nu$ which take values in the space of \emph{finite measures} on $\X$ and must satisfy the following linear constraints:
\begin{align}
 \gamma (k) - \textstyle{\sum_{j \geq 0}} \, p_{jk} \gamma(j) & = 0, \qquad \forall \, k \geq 0,  \label{eq-lp1} \\
   \gamma(k) + \nu(k) - \textstyle{ \sum_{j \geq 0}} \, p_{jk} \nu(j) & = b_k, \quad \ \, \forall \, k \geq 0,  \label{eq-lp2}
\end{align}
where $p_{jk}$ is the probability of transitioning from state $j$ to $k$, and the constants $b_k > 0$, $k \geq 0$, satisfy $\sum_{k \geq 0} b_k = 1$.
(Roughly speaking, the first (second) constraint deals with the recurrent (transient) states in a Markov chain associated with a solution.) 
If a pair $(\gamma, \nu)$ is a feasible solution, $\gamma$ must be an invariant probability measure (cf.\ \cite{HoL94}) and hence, for our case, $\gamma(0)=1, \gamma(k) = 0$, $k \geq 1$. Equation (\ref{eq-lp2}) then becomes
$$  \textstyle{\sum_{j \geq 1}} \, \beta_j \, \nu(j) = 1 - b_0, \qquad \nu(k) - (1 - \beta_{k-1})  \, \nu(k-1) = b_k, \quad \forall \, k \geq 1.$$
Since the $b_k$'s are positive, the second relation above implies that $\nu(1) > 0$ and for all $k \geq 1$, 
$$\nu(k) > 
(1 - \beta_{k-1}) \, \nu({k-1}) > (1 - \beta_{k-1}) \, (1 - \beta_{k-2})  \, \nu(k-2) > \cdots > \big(\textstyle{\prod_{j=1}^{k-1} } (1 - \beta_j) \big) \cdot \nu(1).$$
With $\beta_j = 1/(j+1)$, we then have
$$\textstyle{\sum_{k \geq 1} \nu(k) \geq  \nu(1) \cdot \sum_{k \geq 1} \prod_{j=1}^{k-1} (1 - \beta_j)  =  \nu(1) \cdot \sum_{k \geq 1} 1/k = + \infty,}$$
so $\nu$ is not a finite measure. This shows that the constraints (\ref{eq-lp1})-(\ref{eq-lp2}) cannot be satisfied by finite measures, and hence, regardless of the values of the one-stage costs, the linear program proposed in \cite{HoL94} for solving countable-space MDPs is infeasible for the MDP in this example.
\qed
\end{example}

The preceding discussion suggests that the approach we took to study the optimal average cost functions can supplement the existing methods and provide an additional tool for studying average-cost MDPs. 
We now give several more examples to demonstrate the usage of our results.
  
The following example illustrates an application of Prop.~\ref{prp-ac-const-mc} and Markov chain theory to control systems that involve additive random disturbances.
Specifically, we consider a basic MDP model for a single-product inventory system where, at time $n$, the state $x_n$ is the stock level, the action $a_n$ corresponds to the amount of the product ordered, and the disturbance, denoted $\xi_n$, corresponds to the demand/consumption of the product. We consider the case where negative stock levels are also allowed; the nonnegative case can be treated in a similar way.
The one-stage cost $c(x,a)$ is typically the sum of the ordering cost and the holding or shortage cost, minus the sales revenue; its precise definition does not matter here. 

\begin{example} \rm \label{ex-const-3}
Let $\X = \R$ and $\A = [0, +\infty)$. Let $x_{n+1} = x_n + a_n - \xi_n$, where $\xi_n$, $n \geq 0$, are independent, identically distributed (i.i.d.) nonnegative random variables with common probability distribution $F$ and finite mean $m_F > 0$. Assume that $m_F \in A(x)$ for all $x \in \X$, and that $c(\cdot)$ is bounded above. Let $g^* = g_1^*$ or $g_3^*$.

Let $\leb$ denote the Lebesgue measure on $\R$. If $F$ is ``spread-out'' (i.e., some convolution power of $F$ is nonsingular w.r.t.\ $\leb$), then under the stationary policy that always takes the action $a=m_F$, $\{x_n\}$ is a random walk on $\R$ whose increment distribution has zero mean and is ``spread-out.'' Such a random walk is Harris recurrent with an invariant measure $\leb$ (cf.\ \cite[Example 3.4(c)]{Num84} and \cite[pp.\ 247-248]{MeT09}). So either $g^* \equiv - \infty$ or, by Prop.~\ref{prp-ac-const-mc}, for some finite constant $\ell$, $g^* = \ell$ $\leb$-a.e.\ and $g^* \leq \ell$ on $\X$.

If $F$ is not ``spread-out'' (so that, in particular, $F$ is singular w.r.t.\ $\leb$) but for some $\epsilon > 0$, we have $A(x) \supset (m_F - \epsilon, m_F + \epsilon)$ for all $x \in \X$, then we consider the stationary policy $\mu$ that draws actions $a_n$ uniformly from $(m_F - \epsilon, m_F + \epsilon)$. The increment distribution of the random walk $\{x_n\}$ induced by $\mu$ has zero mean and is nonsingular w.r.t.\ $\leb$. So, as in the previous case, the same conclusion for $g^*$ holds.

Finally, we can further infer that $g^*$ is constant everywhere, if $g^*$ is real-valued and the following mild condition on the feasible action sets holds in addition: for each $x \in \X$, the set $D_x : = \{ y \in \R \mid y < x, \, A(y) \supset (x - y) + A(x) \}$ has $\leb(D_x) > 0$. 
Since $g^* = \ell$ $\leb$-a.e.\ and $g^* \leq \ell$, this condition implies that $\sup_{y \in D_x} g^*(y) = \ell$ for all $x \in \X$. 
But $g^*(y) \leq g^*(x)$ if $y \in D_x$ (since for any policy $\pi = (\mu_0, \mu_1, \ldots)$, the policy $\pi' = (\mu_0', \mu_1, \ldots)$ with $\mu_0'(da_0 \nmid y)$ being a translation of $\mu_0(da_0 \nmid x)$ by $(x-y)$ has the property that $\Pr_x^\pi (x_1 \in \cdot) = \Pr_y^{\pi'} (x_1 \in \cdot)$ and hence $J^{(i)}(\pi,x) = J^{(i)}(\pi', y)$, $i \in \{ 1,3\}$). Therefore $g^* \equiv \ell$ in this case.
\qed
\end{example}

In the next two examples, we illustrate applications of Theorem~\ref{thm-ac-const} in the case where there is a special state reachable from any initial state, and in the case where there is a special ``reset action.'' We will focus on the choices of the measure $\lambda$ in condition (i) of Theorem~\ref{thm-ac-const} and the verification of that condition.
For these two examples, we take $g^* = g^*_1$ or $g^*_3$, and to keep the discussion focused, 
\emph{we assume that $g^* \not\equiv - \infty$ and that $c(\cdot)$ is bounded above or satisfies (\ref{eq-exuc1}), so that Assumptions~\ref{cond-ac-n1}-\ref{cond-ac-n2} are both satisfied and $g^*$ is bounded above.}

\begin{example}[the case of a special state] \rm \label{ex-const-4}
Consider an MDP in which there is a special state $x^o$ such that for each $x \in \X$, there is some policy $\pi_x \in \Pi$ with $\Pr^{\pi_x}_x (x_n = x^o \ \text{for some} \ n \geq 0) = 1$. 
Then $g^* \leq g^*(x^o)$ by Theorem~\ref{thm-ac-const}. Moreover, as we show in the proof of this theorem (see Lemma~\ref{lem-reachability-cond}),  there is a policy $\pi^o \in \Pi$ such that $\Pr^{\pi^o}_x (x_n = x^o \ \text{for some} \ n \geq 0) = 1$ for all $x \in \X$.

For each $\pi \in \Pi$, define a probability measure $\lambda_\pi$ on $\U(\X)$ by
$$ \lambda_\pi (B) : = \textstyle{\sum_{n =0}^\infty} 2^{-n-1} \, \Pr_{x^o}^\pi ( x_n \in B), \qquad  B \in \U(\X),$$  
which is a ``discounted occupation measure'' for the policy $\pi$ and the initial state $x^o$. 
The measure $\lambda_\pi$ satisfies condition (i) of Theorem~\ref{thm-ac-const}.
To see this, let $B \in \B(\X)$ with $\lambda_{\pi}(B) > 0$. 
Then there is some $m \geq 0$ such that $\Pr_{x^o}^\pi(x_m \in B) > 0$. 
Let $\pi_B$ be the policy that executes the policy $\pi^o$ until the system visits the state $x^o$, and then, starting from the state $x^o$, applies the policy $\pi$ for $m$ stages, followed by switching back to $\pi^o$ and repeating this procedure. (The precise expression of $\pi_B$ in terms of the stochastic kernels is similar to that given in the proof of Theorem~\ref{thm-ac-const}(a) in Section~\ref{sec-5.1}.)  
Clearly, $\Pr^{\pi_B}_x( \tau_B < \infty) = 1$ for all $x \in \X$. So condition (i) of Theorem~\ref{thm-ac-const} holds for $\lambda = \lambda_\pi$ and $\hat \X = \X$.

Let $\Lambda_{x^o} : = \{\lambda_\pi \mid \pi \in \Pi\}$.
Since by definition $\lambda_\pi(\{x^o\}) = 1/2 > 0$, 
we have, by Theorem~\ref{thm-ac-const}, that 
$g^* = g^*(x^o)$ $\lambda$-a.e., for all occupation measures $\lambda \in \Lambda_{x^o}$.
\qed
\end{example}

\begin{example}[the case of a special action and its generalizations] \rm \label{ex-const-5}
Consider an MDP in which there exist a special action $a^o$, an associated probability measure $p^o \in \P(\X)$, and a Borel subset $\X^o$ of $\X$ such that 
\begin{enumerate}[leftmargin=0.65cm,labelwidth=!]
\item[(i)] for any $x \in \X^o$, $a^o \in A(x)$ and $q(dy \nmid x, a^o) = p^o$; and
\item[(ii)] for each $x \in \X$, there is some policy $\pi_x \in \Pi$ with $\Pr_x^{\pi_x} (\tau_{\X^o} < \infty) = 1$.
\end{enumerate}
Similarly to the previous example, by the proof of Lemma~\ref{lem-reachability-cond} given in Section~\ref{sec-5.1}, property (ii) above implies that there is a policy $\pi^o \in \Pi$ such that $\Pr^{\pi^o}_x (\tau_{\X^o} < \infty) = 1$ for all $x \in \X$.

For each $\pi \in \Pi$, define an occupation measure $\lambda_\pi$ on $\U(\X)$ by
$$ \lambda_\pi (B) : = \textstyle{\sum_{n =0}^\infty} 2^{-n-1} \, \Pr_{p^o}^\pi ( x_n \in B), \qquad  B \in \U(\X).$$  
Then condition (i) of Theorem~\ref{thm-ac-const} holds for $\lambda = \lambda_\pi$ and $\hat \X = \X$. The reasoning for this is similar to that in the previous example: For each Borel set $B$ with $\lambda_\pi(B) > 0$, there is some integer $m \geq 0$ with $\Pr_{p^o}^\pi(x_m \in B) > 0$. 
We let the desired policy $\pi_B$ be the policy that executes $\pi^o$ until the system visits the set $\X^o$, applies the special action $a^o$ at the state $x_{\tau_{\X^o}}$, and then applies the policy $\pi$ for $m$ stages, followed by switching back to $\pi^o$ and repeating this procedure.

No two occupation measures from the set $\Lambda_{p^o} : = \{\lambda_\pi \mid \pi \in \Pi\}$ 
are mutually singular, because $\lambda_\pi(\cdot) \geq \tfrac{1}{2} p^o(\cdot)$ for all $\pi \in \Pi$.  Then, by Theorem~\ref{thm-ac-const} and Remark~\ref{rmk-1}, there is some finite constant $\ell$ such that $g^* \leq \ell$ and for all occupation measures $\lambda \in \Lambda_{p^o}$, $g^* = \ell$ $\lambda$-a.e. 

The above analysis carries over to more general cases where property (ii) still holds but a special action is replaced by a stationary policy instead. First, clearly the preceding conclusion on $g^*$ remains true, if instead of a special action $a^o$, we have a universally measurable mapping $f^o : \X^o \to \A$ such that $f^o(x) \in A(x)$ and $q(dy \nmid x, f^o(x)) = p^o$ for all $x \in \X^o$.
More generally, we can replace $a^o$ with a stationary policy $\mu^o \in \Pi_s$ such that 
\begin{equation} \label{eq-spc-act-gen1}
 \int_\A q(B \mid x, a) \, \mu^o(da \mid x) = p^o(B), \qquad \forall \, B \in \B(\X), \ x \in \X^o.
\end{equation} 
A further generalization is to have, instead of $a^o$ or the equality (\ref{eq-spc-act-gen1}), a stationary policy $\mu^o \in \Pi_s$ such that for some constant $\beta > 0$,
\begin{equation}  \label{eq-spc-act-gen2}
  \int_\A q(B \mid x, a) \, \mu^o(da \mid x) \geq \beta \, p^o(B), \qquad \forall \, B \in \B(\X), \ x \in \X^o.
\end{equation}  
In all these (increasingly more general) cases, with $\Lambda_{p^o}$ being the same set of occupation measures as defined above,
we have that for some finite constant $\ell \geq g^*$, $g^* = \ell$ $\lambda$-a.e.\ for all $\lambda \in \Lambda_{p^o}$.
\qed
\end{example}

\section{Proofs for Section~\ref{sec-basic-opt}}  \label{sec-proofs-basic}

In this section we prove Theorems~\ref{thm-strat-m}-\ref{thm-ac-basic2}.

\subsection{Review of some Helpful Facts} \label{sec-proof-review}

We discuss first a basic fact concerning the completion of measures. It explains why, instead of the strategic measures on $\U(\Omega)$, we can work with their restrictions to $\B(\Omega)$ in proving the optimality results given in Section~\ref{sec-basic-opt}. It will also be useful later in the proofs for Section~\ref{sec-ae-const}, when we deal with Markov chains on $(\X, \U(\X))$ induced by stationary policies. 
We specialize this fact to probability measures on Borel spaces; it holds more generally for measure spaces (cf.\ \cite[Chap.\ 3.3, Problem 3]{Dud02}). We include a proof for the sake of completeness.

\begin{lem} \label{lem-umset}
Let $X$ be a Borel space and $\lambda$ a $\sigma$-finite measure on $(X, \B(X))$. Then for any $B \in \U(X)$, there exist sets $A, C \in \B(X)$ with $A \subset B \subset C$ and $\lambda(C \setminus A) = 0$.
\end{lem}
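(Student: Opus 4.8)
The plan is to reduce the $\sigma$-finite measure $\lambda$ to a genuine Borel \emph{probability} measure with the same null sets, and then to invoke the definition of $\U(X)$ together with the standard ``Borel sandwich'' description of a completion. I would first dispose of the trivial case $\lambda \equiv 0$, where $A = \varnothing$ and $C = X$ work; so assume $\lambda$ is nontrivial. Using $\sigma$-finiteness, write $X = \bigsqcup_{n} X_n$ with $X_n \in \B(X)$ and $\lambda(X_n) < \infty$, and build a strictly positive Borel function $h \colon X \to (0,\infty)$ with $\int_X h \, d\lambda < \infty$ (on each $X_n$ take $h$ to be a suitable positive constant, chosen so that the pieces of positive measure contribute a summable total). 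Normalizing, set $\mu(E) := (\int_X h\, d\lambda)^{-1} \int_E h \, d\lambda$ for $E \in \B(X)$. Then $\mu \in \P(X)$, and because $h > 0$ everywhere one has $\mu(E) = 0 \iff \lambda(E) = 0$; in particular $\lambda \ll \mu$.

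Next, since $B \in \U(X) = \bigcap_{p \in \P(X)} \B_p(X)$ and $\mu \in \P(X)$, we have $B \in \B_\mu(X)$, i.e.\ $B$ lies in the $\mu$-completion of $\B(X)$. I would then use the elementary fact that a set in the completion of a Borel probability measure admits a Borel sandwich: there exist $A, C \in \B(X)$ with $A \subset B \subset C$ and $\mu(C \setminus A) = 0$. This follows by writing a completion-measurable set as $A \cup M$ with $A \in \B(X)$ and $M$ contained in a $\mu$-null Borel set $N$, and taking $C = A \cup N$. Finally, $\mu(C \setminus A) = 0$ forces $\lambda(C \setminus A) = 0$ since $\lambda \ll \mu$, which is exactly the claim.

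The steps are all routine; the only point requiring care is the first one. The subtlety is that $\U(X)$ is defined purely through the family $\P(X)$ of probability measures, so to bring $\lambda$ into the picture I must manufacture a \emph{probability} measure that detects precisely the $\lambda$-null sets, and this is where $\sigma$-finiteness is essential: an infinite $\lambda$ has no normalization, and an arbitrary infinite measure need not admit an equivalent finite one, whereas $\sigma$-finiteness always yields the strictly positive integrable density $h$ above. After that reduction, the Borel-sandwich characterization of a completed measure does all the remaining work.
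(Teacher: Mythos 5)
Your proof is correct and follows essentially the same route as the paper's: both reduce the $\sigma$-finite $\lambda$ to an equivalent Borel probability measure (you spell out the strictly positive integrable density, which the paper leaves implicit, and you also handle the trivial case $\lambda \equiv 0$ that the paper's phrasing silently excludes), and both then extract the Borel sandwich $A \subset B \subset C$ from the completion. The only cosmetic difference is that the paper phrases the sandwich step via outer measure and a covering Borel null set, while you use the equivalent ``$B = A \cup M$ with $M$ inside a null Borel set'' description of the completion.
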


\begin{proof} 
Let $\rho$ be a probability measure on $\B(X)$ equivalent to $\lambda$. Since $B$ is universally measurable, there exists a Borel set $E$ with $\rho^*(E \triangle B) = 0$, where $\rho^*$ denotes the outer measure w.r.t.\ $\rho$, and $E \triangle B$ denotes the symmetric difference between $E$ and $B$. By \cite[Thm.\ 3.3.1]{Dud02}, there exists a Borel set $F$ with $F \supset E \triangle B$ and $\rho(F) = \rho^*(E \triangle B) = 0$. Let $A = E \setminus F$ and $C = E \cup F$. Then $A, C \in \B(X)$, $A \subset B \subset C$, and $\rho(C \setminus A) = \rho(F) = 0$. Since $\rho$ is equivalent to $\lambda$, $\lambda(C \setminus A) = 0$.
\end{proof}

\begin{rem} \rm \label{rmk-lem-umset}
Lemma~\ref{lem-umset} also shows that for any $\sigma$-algebra $\cE(X)$ such that $\B(X) \subset \cE(X) \subset \U(X)$, if $\lambda'$ is a $\sigma$-finite measure on $\cE(X)$ and $\lambda$ is its restriction to $\B(X)$, then $\lambda'$ is determined by $\lambda$ and coincides on $\cE(X)$ with $\bar \lambda$, the completion of $\lambda$. To see this, let $B \in \cE(X)$ and recall that by the definition of the completion of a measure, $\bar \lambda(B) = \lambda(D)$ for any set $D \in \B(X)$ such that the symmetric difference $B \triangle D$ has outer measure zero w.r.t.\ $\lambda$ \cite[p.~102]{Dud02}. The sets $A$ and $C$ given in Lemma~\ref{lem-umset} are among such sets, since we have $B \triangle A \subset C \setminus A, B \triangle C \subset C \setminus A$, and $\lambda(C \setminus A) = 0$. So $\bar \lambda(B) = \lambda(A)$. Since $\lambda' = \lambda$ on $\B(X)$, we also have $\lambda'(C \setminus A) = 0$; since $A \subset B \subset C$, we then have $\lambda'(B) = \lambda'(A) = \lambda(A)$. Thus $\lambda'(B) = \bar \lambda(B)$ for all $B \in \cE(X)$.
\qed
\end{rem}

We now recount several results about analytic sets, lower semianalytic functions, and Borel measurable functions/stochastic kernels, which will be used in the subsequent proofs. 

Let $X$ and $Y$ be Borel spaces. The following operations on analytic sets result in analytic sets:
\begin{itemize}[leftmargin=0.7cm,labelwidth=!]
\item[(a)] Countable unions and countable intersections: if $\{B_n\}$ is a sequence of analytic sets in $X$, then $\cup_n B_n$ and $\cap_n B_n$ are analytic \cite[Cor.\ 7.35.2]{bs}.
\item[(b)] Borel image and preimages: if $B \subset X$ and $D \subset Y$ are analytic and $f : X \to Y$ is a Borel measurable function, then $f(B)$ and $f^{-1}(D)$ are analytic \cite[Prop.~7.40]{bs}. 
\end{itemize}
These properties of analytic sets are reflected in the properties of lower semianalytic functions, whose lower level sets are analytic, as we recall.
In particular, regarding operations that preserve lower semianalyticness, we have the following \cite[Lemma 7.30]{bs}:
\begin{itemize}[leftmargin=0.7cm,labelwidth=!]
\item[(c)] If $D$ is an analytic set and $f_n: D \to [-\infty, \infty]$, $n \geq 1$, is a sequence of lower semianalytic functions, then
the functions $\inf_n f_n$, $\sup_n f_n$, $\liminf_{n \to \infty} f_n$, and $\limsup_{n \to \infty} f_n$ are also lower semianalytic. 
\item[(d)] If $D$ is an analytic set and $f, g: D \to [-\infty, \infty]$ are lower semianalytic functions, then $f+g$ is lower semi-analytic. In addition, if $f, g \geq 0$ or if $g$ is Borel measurable and $g \geq 0$, then $f g$ is lower semianalytic. 
\item[(e)] If $g:X \to Y$ is Borel measurable and $f: g(X) \to [-\infty, \infty]$ is lower semianalytic, then the composition $f \circ g$ is lower semianalytic.
\end{itemize}
In the above and in what follows, for arithmetic operations involving extended real numbers, we adopt the convention $\infty - \infty = - \infty + \infty = \infty$ and $0\cdot \pm \infty = \pm \infty \cdot 0  =0$.

Regarding integration, the following results will be useful:
\begin{enumerate}[leftmargin=0.7cm,labelwidth=!]
\item[(f)] If $f: X \times Y \to [-\infty, + \infty]$ is lower semianalytic (resp.\ Borel measurable) and $\kappa(dy \,|\, x)$ is a Borel measurable stochastic kernel on $Y$ given $X$, then $\int f(x,y) \, \kappa(dy \,|\, x)$ is a lower semianalytic (resp.\ Borel measurable) function on $X$ \cite[Prop.\ 7.48 and Prop. 7.29]{bs}.
\item[(g)] If $f : X \to [-\infty, + \infty]$ is lower semianalytic (resp.\ Borel measurable), the function $p \mapsto \int f dp$ is lower semianalytic (resp.\ Borel measurable) on $\P(X)$ \cite[Cor.\ 7.48.1 and Cor.~7.29.1]{bs}.
\end{enumerate}

Besides (g), an important property concerning probability measures in $\P(X)$ is:
\begin{enumerate}[leftmargin=0.7cm,labelwidth=!]
\item[(h)] for any analytic set $D \subset X$ and real number $a \geq 0$, the sets $\{ p \in \P(X) \mid p (D) > a\}$ and $\{ p \in \P(X) \mid p (D) \geq a\}$ are analytic (\cite[Lem.~(25)]{BFO74}; see also \cite[Prop.~7.43, Cor.~7.43.1]{bs}).
\end{enumerate}

Finally, we discuss several important results about analytic sets in product spaces and partial minimization of lower semianalytic functions on such spaces. 
Let $D \subset X \times Y$ be an analytic set.
\begin{enumerate}[leftmargin=0.7cm,labelwidth=!] 
\item[(i)] The projection of $D$ on $X$, $\text{proj}_X(D) : = \{ x \!\mid (x,y) \in D \ \text{for some} \ y \in Y \}$, is analytic \cite[Prop.~7.39]{bs}. (As can be seen, this is implied by (b); in fact, it was used to prove (b).)
\item[(j)] The Jankov-von Neumann measurable selection theorem \cite[Prop.\ 7.49]{bs}: there exists an analytically measurable function $\phi: \text{proj}_X(D) \to Y$ such that the graph of $\phi$ lies in $D$ (i.e., $\phi$ is measurable w.r.t.\ the $\sigma$-algebra generated by the analytic subsets of $X$, and $(x, \phi(x)) \in D$ for all $x \in \text{proj}_X(D)$).
\end{enumerate}
For partial minimization of a lower semianalytic function $f(x,y)$ (that is, minimizing $f$ over $y$ for each $x$), by applying (i) and (j) to the level sets or epigraph of $f$, one obtains the following \cite[Props.~7.47, 7.50]{bs}:
\begin{enumerate}[leftmargin=0.7cm,labelwidth=!] 
\item[(k)] If $f: D \to [-\infty, +\infty]$ is lower semianalytic, then the function $f^*: \text{proj}_X(D) \to [-\infty, +\infty]$ given by
\begin{equation} \label{eq-minf}
   f^*(x) = \inf_{y \in D_x} f(x, y), \quad \text{where} \ D_x = \{ y \in Y \mid (x, y) \in D \}, 
\end{equation}   
is also lower semianalytic. Furthermore, let $E^* : = \{ x \in \text{proj}_X(D) \mid  \argmin_{y \in D_x} f(x,y) \not= \varnothing \}$. Then
for every $\epsilon > 0$, there exists a universally measurable function $\phi: \text{proj}_X(D) \to Y$
such that $\phi(x) \in D_x$ for all $x \in \text{proj}_X(D)$ and 
\begin{equation} \label{eq-um-minimizer-1}
     f(x, \phi(x)) = f^*(x), \qquad \forall \,  x \in E^*,
\end{equation}
\begin{equation}   \label{eq-um-minimizer-2}
 f(x, \phi(x)) \leq  \begin{cases}
        f^*(x) + \epsilon & \text{if} \ f^*(x) > - \infty, \\
        -1/\epsilon & \text{if} \ f^*(x) = - \infty,
        \end{cases} \qquad \ \ \  \forall \, x \in \text{proj}_X(D) \setminus E^*.
\end{equation}
\end{enumerate}

We will use (a)-(b) and (f)-(h) frequently in the proof of Theorem~\ref{thm-strat-m}. The latter theorem and (k) are the key proof arguments for Theorems~\ref{thm-ac-basic}-\ref{thm-ac-basic2}.

\subsection{Proof of Theorem~\ref{thm-strat-m}}

Recall that $\S_\star \in \{ \S, \S_m, \S_s\}$ and $\S_\star^0 = \{ p \in \S_\star \mid p_0(p) = \delta_x, \, x \in \X \}$, where $p_0(p)$ stands for the marginal distribution of $x_0$ w.r.t.\ $p$. 

\begin{lem} \label{lem-strat-m0}
If $\S_\star$ is analytic, then so is $\S_\star^0$.
\end{lem}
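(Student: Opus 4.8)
The plan is to realize $\S_\star^0$ as the intersection of $\S_\star$ with a Borel subset of $\P(\Omega)$, so that analyticity is inherited directly from the hypothesis. Write $\Delta_\X := \{\delta_x \mid x \in \X\} \subset \P(\X)$ for the set of Dirac measures, and let $g : \P(\Omega) \to \P(\X)$ send each $p$ to its $x_0$-marginal $p_0(p)$. Since the $x_0$-marginal of $\Pr^\pi_{p_0}$ is exactly the initial distribution $p_0$, a measure $p \in \S_\star$ lies in $\S_\star^0$ if and only if $g(p) = \delta_x$ for some $x \in \X$, i.e.\ $g(p) \in \Delta_\X$ (and in that case $p$ is the restriction of $\Pr^\pi_{\delta_x}$ for the same policy class). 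Hence
$$ \S_\star^0 = \S_\star \cap g^{-1}(\Delta_\X), $$
and it remains to show that $g^{-1}(\Delta_\X)$ is Borel, for which I would verify in turn that $g$ is Borel measurable and that $\Delta_\X$ is a Borel subset of $\P(\X)$.

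First I would check that $g$ is Borel. The coordinate projection $e_0 : \Omega \to \X$, $\omega = (x_0, a_0, x_1, \ldots) \mapsto x_0$, is continuous for the product topology on $\Omega$, and $g$ is the corresponding pushforward $p \mapsto p \circ e_0^{-1}$; pushforward along a continuous map is continuous for the topologies of weak convergence, so $g$ is in particular Borel measurable.

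Next I would show $\Delta_\X$ is Borel. Fix a countable family $\{B_k\} \subset \B(\X)$ generating $\B(\X)$, e.g.\ a countable base of $\X$. For each Borel $B$ the map $p \mapsto p(B) = \int \ind_B \, dp$ is Borel on $\P(\X)$ by fact (g) of Section~\ref{sec-proof-review}, so each $\{p : p(B_k) \in \{0,1\}\}$ is Borel and therefore so is $G := \bigcap_k \{p : p(B_k) \in \{0,1\}\}$. For $p \in G$ the class $\{B \in \B(\X) : p(B) \in \{0,1\}\}$ is a $\sigma$-algebra containing every $B_k$, hence equals $\B(\X)$, so $p$ is $\{0,1\}$-valued; and since $\X$ is a Borel space, such a measure is a point mass. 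This yields $\Delta_\X = G$, so $\Delta_\X$ is Borel. (Alternatively, one may simply invoke the standard fact that $\{\delta_x\}$ is a Borel subset of $\P(\X)$, cf.\ \cite[Chap.~7.4]{bs}.)

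Finally, $g^{-1}(\Delta_\X)$ is Borel as the preimage of a Borel set under the Borel map $g$, hence analytic; since $\S_\star$ is analytic by hypothesis and finite intersections of analytic sets are analytic (fact (a)), $\S_\star^0 = \S_\star \cap g^{-1}(\Delta_\X)$ is analytic. The only genuinely non-routine ingredient is the claim that a $\{0,1\}$-valued Borel probability measure on a Borel space is a Dirac measure, which rests on the standard-Borel structure of $\X$: a Borel isomorphism onto a subset of $[0,1]$ reduces it to the elementary one-dimensional statement. Everything else is bookkeeping with the measurability facts collected in Section~\ref{sec-proof-review}.
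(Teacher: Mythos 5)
Your proof is correct and follows essentially the same route as the paper's: both write $\S_\star^0 = \S_\star \cap g^{-1}(\Delta_\X)$, where $g$ is the Borel measurable $x_0$-marginal map and $\Delta_\X$ is the Borel set of Dirac measures, and conclude by intersecting an analytic set with a Borel set. The only difference is cosmetic: where the paper cites \cite[Cor.~7.21.1]{bs}, \cite[Cor.~3.3]{Par67}, and \cite[Prop.~7.26, Cor.~7.29.1]{bs} for the Borelness of $\Delta_\X$ and the measurability of $g$, you prove these ingredients directly (continuity of the pushforward and the $\{0,1\}$-valued-measure argument), and both arguments are sound.
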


\begin{proof}
Since the mapping $x \mapsto \delta_x$ is a homeomorphism from $\X$ into $\P(\X)$ \cite[Cor.\ 7.21.1]{bs}, $\P_0: = \{\delta_x \mid x \in \X\}$ is a Borel subset of $\P(\X)$ by \cite[Cor.\ 3.3]{Par67}. 
Define a mapping $\psi : \P(\Omega) \to \P(\X)$ that maps each $p \in \P(\Omega)$ to its marginal distribution of $x_0$. Then $\psi$ is Borel measurable by \cite[Prop.\ 7.26 and Cor.\ 7.29.1]{bs}; consequently, $\psi^{-1} (\P_0)$ is a Borel subset of $\P(\Omega)$.
Since $\S^0_\star = \S_\star \cap \psi^{-1} (\P_0)$, it follows that $\S_\star^0$ is analytic if $\S_\star$ is analytic.
\end{proof}

We now treat the three cases of $\S_\star$ separately and prove that it is analytic in each case (see Props.~\ref{prp-S}, \ref{prp-Sm}, and \ref{prp-Ss}). Together with Lemma~\ref{lem-strat-m0}, this will establish Theorem~\ref{thm-strat-m}.

\subsubsection{The Set $\S$}

Let $n \geq 0$. Denote $h'_n : = (x_0, a_0, \ldots, x_{n}, a_{n})$ and denote its space by $H'_n$; thus $H'_n = (\X \times \A)^{n+1}$.
Recall that $h_n : = (x_0, a_0, x_1, a_1, \ldots, x_n)$ and $H_n  =  (\X \times \A)^n \times \X$ is the space of $h_n$.
With respect to $p \in \P(\Omega)$, the probability of an event $E$ is denoted by $p \{ E\}$.

Consider any $p \in \S$. Then $p$ is induced by some policy, so from the control constraint in the MDP we have
\begin{equation} \label{cond-strm1a}
p \big\{ (x_n, a_n) \in \Gamma \big\} = 1, \qquad \forall \, n \geq 0.
\end{equation}
From the state transition dynamics in the MDP we also have
\begin{equation} \label{cond-strm1b}
    \int_\Omega f_{n,i}(h'_{n}, x_{n+1}) \, p(d \omega) = \int_\Omega \int_\X  f_{n,i}(h'_{n}, y)  \, q(dy \mid x_{n}, a_{n}) \, p(d\omega), \qquad \forall \, i \geq 1, \ n \geq 0,
\end{equation}
where for each $n \geq 0$, $\{f_{n,i}\}_{i \geq 1}$ are the indicator functions of a countable family of Borel subsets of $H_{n+1}$ that form a measure determining class---that is, for any $\rho_1, \rho_2 \in \P(H_{n+1})$, $\rho_1 = \rho_2$ if and only if 
$\int f_{n,i} \, d \rho_1 =  \int f_{n,i} \, d \rho_2$ for all $i \geq 1$. (Such a countable family exists since $\B(H_{n+1})$ is countably generated.)

\begin{lem} \label{lem-strm1}
The set $\S = \big\{ p \in \P(\Omega) \mid p \ \text{satisfies (\ref{cond-strm1a}) and (\ref{cond-strm1b})} \big\}$.
\end{lem}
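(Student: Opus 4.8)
The plan is to prove the two inclusions separately. For the inclusion $\S \subseteq \{ p \mid p \ \text{satisfies (\ref{cond-strm1a}) and (\ref{cond-strm1b})}\}$, take $p \in \S$, say $p$ is the restriction to $\B(\Omega)$ of $\Pr_{p_0}^\pi$ for some $\pi = (\mu_0, \mu_1, \ldots) \in \Pi$ and $p_0 \in \P(\X)$. Condition (\ref{cond-strm1a}) is immediate from the control constraint (\ref{eq-control-constraint}): under $\Pr_{p_0}^\pi$ the action $a_n$ is drawn from $\mu_n(\cdot \nmid h_n)$, and $\mu_n(A(x_n) \nmid h_n) = 1$, so $(x_n, a_n) \in \Gamma$ almost surely. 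For (\ref{cond-strm1b}), recall from the Ionescu--Tulcea construction (\cite[Prop.~7.45]{bs}) that, under $\Pr_{p_0}^\pi$, the conditional distribution of $x_{n+1}$ given $h'_n$ is exactly $q(dy \nmid x_n, a_n)$; integrating any bounded Borel $f_{n,i}(h'_n, x_{n+1})$ against this conditioning yields (\ref{cond-strm1b}) directly.

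For the reverse inclusion, suppose $p \in \P(\Omega)$ satisfies (\ref{cond-strm1a}) and (\ref{cond-strm1b}); I will build a policy and an initial distribution that induce $p$. Let $p_0 \in \P(\X)$ be the $x_0$-marginal of $p$. For each $n \geq 0$, the $p$-distribution of $h'_n = (h_n, a_n)$ is a Borel probability measure on the Borel space $H_n \times \A$, so it disintegrates as the $p$-distribution of $h_n$ together with a Borel measurable stochastic kernel $\mu_n(da_n \nmid h_n)$ on $\A$ given $H_n$, by the existence of regular conditional probabilities on Borel spaces (\cite[Prop.~7.27]{bs}). Reading (\ref{cond-strm1a}) through this disintegration shows that $\mu_n(A(x_n) \nmid h_n) = 1$ for $p$-almost every $h_n$, where $A(x_n)$ is measured via the completion of $\mu_n(\cdot \nmid h_n)$.

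To obtain a genuine policy I must upgrade this to hold for \emph{every} $h_n$. I would redefine $\mu_n$ on the (universally measurable) $p$-null set where feasibility fails by placing there the Dirac measure $\delta_{\phi(x_n)}$, with $\phi$ a universally measurable feasible selector of $A(\cdot)$ furnished by the Jankov--von Neumann theorem (\cite[Prop.~7.49]{bs}); this alters $\mu_n$ only on a $p$-null set, leaves it universally measurable, and makes (\ref{eq-control-constraint}) hold everywhere, so $\pi := (\mu_0, \mu_1, \ldots) \in \Pi$. It then remains to check that $\Pr_{p_0}^\pi$ restricted to $\B(\Omega)$ equals $p$. Since a Borel probability measure on $\Omega$ is determined by its finite-dimensional marginals on the spaces $H'_n$, I would argue by induction that the $h'_n$-marginals of $\Pr_{p_0}^\pi$ and of $p$ coincide. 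The base case $n = 0$ holds because both have $x_0$-marginal $p_0$ and the same conditional $\mu_0(da_0 \nmid x_0)$. For the inductive step, appending $x_{n+1}$ to the (equal) $h'_n$-marginals gives equal $h_{n+1}$-marginals: under $\Pr_{p_0}^\pi$ this uses the kernel $q$, and under $p$ it uses (\ref{cond-strm1b}), which---holding for the measure-determining family $\{f_{n,i}\}_{i \geq 1}$---says precisely that the $p$-distribution of $(h'_n, x_{n+1})$ is the $p$-distribution of $h'_n$ extended by $q(dx_{n+1} \nmid x_n, a_n)$. Appending $a_{n+1}$ via the common kernel $\mu_{n+1}$ then yields equal $h'_{n+1}$-marginals, completing the induction and giving $p = \Pr_{p_0}^\pi$ on $\B(\Omega)$, i.e.\ $p \in \S$.

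The main obstacle I anticipate is the passage from the $p$-almost-everywhere feasibility supplied by (\ref{cond-strm1a}) to feasibility at \emph{every} history, which the definition of a policy demands: one cannot expect a Borel repair here (feasible Borel selectors need not exist, \cite{Blk-borel}), and the use of a merely universally measurable selector is exactly what forces---and is compatible with---the universal-measurability framework for $\Pi$. A secondary point needing care is the correct reading of (\ref{cond-strm1b}): because $\{f_{n,i}\}_{i \geq 1}$ is a measure-determining class, the stated family of scalar identities is equivalent to the single measure identity on $H_{n+1}$ that drives the inductive step.
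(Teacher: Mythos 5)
Your proposal is correct and follows essentially the same route as the paper's proof: both directions match---the forward direction via the control constraint and the Ionescu--Tulcea construction, and the reverse direction by disintegrating $p$ into Borel measurable kernels (\cite[Prop.~7.27]{bs}) whose transition components are identified with $q$ through the measure-determining family in (\ref{cond-strm1b}), repairing the action kernels on the universally measurable $p$-null sets where (\ref{cond-strm1a}) fails, and concluding by induction on the finite-dimensional marginals. The only cosmetic difference is that you patch with a nonrandomized selector $\delta_{\phi(x_n)}$ from the Jankov--von Neumann theorem, whereas the paper patches with a fixed stationary policy $\mu^o \in \Pi_s$ (whose existence rests on the same selection theorem), which is immaterial.
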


\begin{proof}
As just discussed, every $p \in \S$ satisfies (\ref{cond-strm1a})-(\ref{cond-strm1b}). 
Consider now any $p \in \P(\Omega) $ that satisfies these constraints.
By a repeated application of \cite[Cor.\ 7.27.2]{bs} to decompose the marginals of $p$ on $H'_n, H_{n+1}$, $n \geq 0$, and taking into account (\ref{cond-strm1b}), we can represent $p$ as the composition of its marginal $p_0(dx_0)$ on $H_0$ with a sequence of Borel measurable stochastic kernels: 
$$p_0(dx_0), \ \mu_0(da_0 \mid x_0), \ q(d x_1 \mid x_0, a_0), \  \ldots, \ \mu_{n}(da_{n} \mid h_{n}),  \ q(d x_{n+1} \mid x_{n}, a_n), \ \ldots.$$
(In other words, $p$ coincides with the unique probability measure on $\B(\Omega)$ determined by the above sequence.)
Define $E_n : = \{ h_n \in H_n \mid \mu_n( A(x_{n}) \,|\, h_n) < 1\big\}$, $n \geq 0$.  
Since the stochastic kernels $\mu_n$ are Borel measurable and the sets $A(x)$, $x \in \X$, are analytic, the sets $E_n$ are universally measurable \cite[Prop.\ 7.46]{bs}. 
Since $p$ satisfies (\ref{cond-strm1a}), we must have 
\begin{equation} \label{eq-lem-strm1-prf1}
    p \{ h_n \in E_n \} = 0, \qquad \forall \,  n \geq 0.
\end{equation}   
Now, for some fixed $\mu^o \in \Pi_s$ and for each $n \geq 0$, let 
$$ \tilde \mu_n(\cdot \mid h_n) : = \begin{cases} 
  \mu_n(\cdot \mid h_n), & \text{if} \  h_n \not \in E_n; \\
  \mu^o (\cdot \,|\, x_n), & \text{if} \   h_n \in E_n.
  \end{cases}
$$ 
Let $\pi: = (\tilde \mu_0, \tilde \mu_1, \ldots)$. Then for all $n \geq 0$, $\tilde \mu_n(da_n \,|\, h_n)$ is a universally measurable stochastic kernel that satisfies $\tilde \mu_n(A(x_n) \,|\, h_n) = 1$ for all $h_n \in H_n$, so $\pi$ is a valid policy in $\Pi$.
By induction on $n$ and using (\ref{eq-lem-strm1-prf1}), it is straightforward to verify that 
$p \big\{ h'_n \in B^{(n)} \big\} = \Pr^\pi_{p_0} \big\{ h'_n \in B^{(n)} \big\}$
for every measurable rectangle $B^{(n)}$ of the form $B^{(n)} = B_0 \times \cdots \times B_{n}$, $B_i \in \B(\X \times \A)$, $0 \leq i \leq n$, $n \geq 0$. 
This implies that $p$ coincides with the restriction of $\Pr^\pi_{p_0}$ to $\B(\Omega)$ and hence belongs to $\S$.
\end{proof}

\begin{prop} \label{prp-S}
The set $\S$ is analytic.
\end{prop}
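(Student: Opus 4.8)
The plan is to build directly on the characterization in Lemma~\ref{lem-strm1}, which expresses $\S$ as the set of all $p \in \P(\Omega)$ satisfying the control constraint (\ref{cond-strm1a}) and the transition-consistency equations (\ref{cond-strm1b}). Since (\ref{cond-strm1a}) is one constraint for each $n \geq 0$ and (\ref{cond-strm1b}) is one constraint for each pair $(n,i)$, the set $\S$ is a countable intersection of the associated constraint sets. By the closure of analytic sets under countable intersections (property (a)), it therefore suffices to show that each single constraint set is analytic.

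For the control constraint (\ref{cond-strm1a}), first I would observe that the event set $G_n := \{\omega \in \Omega \mid (x_n, a_n) \in \Gamma\}$ is the preimage of the analytic set $\Gamma$ under the (continuous, hence Borel) coordinate projection $\omega \mapsto (x_n, a_n)$, and so is analytic by property (b). Because $G_n$ is analytic and thus universally measurable, $p(G_n)$ is well defined for every $p \in \P(\Omega)$ via the completion of $p$, and property (h), applied with the analytic set $G_n$ and $a = 1$, shows that $\{p \mid p(G_n) \geq 1\} = \{p \mid p(G_n) = 1\}$ is analytic (using $p(G_n) \leq 1$). This is the only place where the argument genuinely exploits analyticity rather than Borel measurability, and I expect it to be the crux: since $\Gamma$ and the policies are merely analytic/universally measurable, this constraint set is analytic but not Borel in general, and property (h) is exactly the tool that keeps us inside the analytic category.

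For the transition-consistency constraints (\ref{cond-strm1b}) the data are all Borel, so the argument is routine. Fixing $n$ and $i$, I would introduce the two maps
$$\Phi_{n,i}(p) := \int_\Omega f_{n,i}(h'_n, x_{n+1}) \, p(d\omega), \qquad \Psi_{n,i}(p) := \int_\Omega \Big( \int_\X f_{n,i}(h'_n, y) \, q(dy \mid x_n, a_n) \Big) p(d\omega).$$
The integrand defining $\Phi_{n,i}$ is a Borel function of $\omega$, so $\Phi_{n,i}$ is Borel measurable on $\P(\Omega)$ by property (g). For $\Psi_{n,i}$, viewing $q(dy \mid x_n, a_n)$ as a Borel measurable stochastic kernel on $\X$ given $H'_n$, property (f) shows the inner integral is a Borel function of $h'_n$ (hence of $\omega$), and property (g) then makes $\Psi_{n,i}$ Borel measurable as well. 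Since each $f_{n,i}$ is an indicator, all integrals lie in $[0,1]$ and no $+\infty-\infty$ arises, so $\{p \mid \Phi_{n,i}(p) = \Psi_{n,i}(p)\}$ is the zero set of the Borel function $\Phi_{n,i} - \Psi_{n,i}$ and is therefore Borel, a fortiori analytic.

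Combining the two cases, $\S$ is a countable intersection of analytic sets, and hence analytic by property (a), which completes the argument. The only subtlety to flag in the write-up is the well-definedness of $p(G_n)$ and the applicability of property (h) to the analytic (non-Borel) set $G_n$; every other step is a direct invocation of the measurability toolbox (a), (b), (f), (g).
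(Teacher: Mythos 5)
Your proof is correct and follows essentially the same route as the paper: both start from Lemma~\ref{lem-strm1}, write $\S$ as a countable intersection, handle the control-constraint sets via the analyticity of $\{p \mid p(D) = 1\}$ for the analytic set $D = (\X \times \A)^n \times \Gamma \times (\X \times \A)^\infty$ (your $G_n$ is exactly this set, and your property (h) is the paper's citation of \cite[Prop.~7.43]{bs}), and show the transition-consistency sets are Borel via properties (f)--(g). Your explicit flagging of the well-definedness of $p(G_n)$ on the completion is a fine extra remark but does not change the argument.
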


\begin{proof}
For each $n \geq 0$, let $E_n : = \{p \in \P(\Omega) \mid \text{$p$ satisfies (\ref{cond-strm1a}) for the given $n$}\}$. For each $n \geq 0$ and $i \geq 1$, let $F_{n,i} : = \{ p \in \P(\Omega) \mid  \text{$p$ satisfies (\ref{cond-strm1b}) for the function $f_{n,i}$}\}$. 
By Lemma~\ref{lem-strm1}, $\S = \P_1 \cap \P_2$, for $\P_1 : = \cap_{n \geq 0} E_n$, $\P_2 : = \cap_{n \geq 0, i \geq 1} F_{n,i}$.
Let us show that every $E_n$ is analytic and every $F_{n,i}$ is Borel. 
This will imply that $\S$ is analytic.

For each $n \geq 0$, condition (\ref{cond-strm1a}) is the same as that $p(D) = 1$ for the set $D : = (\X \times \A)^{n} \times \Gamma \times (\X \times \A)^\infty$. Since $\Gamma$ is analytic, $D$ is an analytic subset of $\Omega$ \cite[Prop.~7.38]{bs}. Then, by \cite[Prop.~7.43]{bs}, 
$E_n = \{ p \in \P(\Omega) \mid p(D) = 1\}$ is analytic. 

For each $n \geq 0$ and $i \geq 1$, since $f_{n,i}$ is Borel measurable and $q(dy \,|\, x,a)$ is a Borel measurable stochastic kernel, $\int_\X  f_{n,i}(h'_{n}, y)  \, q(dy \nmid x_{n}, a_{n})$ is a Borel measurable function of $h'_n$ \cite[Prop.~7.29]{bs}. Then, by \cite[Cor.~7.29.1]{bs}, the integral on the right-hand side (r.h.s.)~of (\ref{cond-strm1b}) is a Borel measurable function of $p$, 
and so is the integral on the left-hand side (l.h.s.)~of (\ref{cond-strm1b}) (since $f_{n,i}$ is Borel measurable). This implies that $F_{n,i}$ is a Borel subset of $\P(\Omega)$. The proof is now complete.
\end{proof}

\subsubsection{The Set $\S_m$}

To prove that $\S_m$ is analytic, we will show that it is the image of an analytic set under a Borel measurable mapping (from one Borel space into another). This mapping will be constructed based on the observation that for a Markov policy $\pi$, $\Pr^\pi_{p_0}$ can be determined from its marginal distributions of $(x_n, a_n)$, $n \geq 0$.

To this end, consider the process $\{(x_n, a_n)\}_{n \geq 0}$ induced by some $\pi \in \Pi_m$ and $p_0 \in \P(\X)$. 
Let $\gamma_n$ be the marginal distribution of $(x_n, a_n)$ restricted to $\B(\X \times \A)$. 
Then
$$z: = (p_0, \gamma_0, \gamma_1, \ldots) \in  \P(\X) \times \big( \P(\X \times \A) \big)^\infty =: \Z,$$
and we shall refer to $z$ as the sequence of marginal distributions induced by $\pi$ and $p_0$.
Let $\Delta \subset \Z$ be the set of all such sequences induced by Markov policies and initial distributions. 
It is shown in \cite[Def.\ 9.4 and Prop.\ 9.2]{bs} that $\Delta = \{ z \in \Z \mid z \ \text{satisfies (\ref{cond-Delta1})-(\ref{cond-Delta2}) given below}\}$:
\begin{equation} \label{cond-Delta1}
   \gamma_n(\Gamma) = 1, \qquad \forall \, n \geq 0;
\end{equation}
\begin{equation} \label{cond-Delta2}
 \gamma_0(B \times \A) = p_0(B), \quad   \gamma_n(B \times \A) = \int_{\X \times \A}  q(B \mid x, a)  \, \gamma_{n-1} \big(d(x,a)\big), \quad \forall \, B \in \B(\X), \, n \geq 1. 
\end{equation}

\begin{lem}[{\cite[Lem.\ 1]{ShrB79}; see also \cite[Lem.\ 9.1]{bs}}] 
The set $\Delta$ is analytic. 
\end{lem}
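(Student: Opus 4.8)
The plan is to follow the template of the proof of Proposition~\ref{prp-S}, using the characterization $\Delta = \{ z \in \Z \mid z \ \text{satisfies (\ref{cond-Delta1})-(\ref{cond-Delta2})}\}$ supplied by \cite[Def.~9.4 and Prop.~9.2]{bs}. Writing a generic element as $z = (p_0, \gamma_0, \gamma_1, \ldots)$, I would express $\Delta$ as a countable intersection of sets, each cut out by a single one of the constraints, and then argue that each such set is either analytic (for the constraints coming from (\ref{cond-Delta1})) or Borel (for those coming from (\ref{cond-Delta2})). Since Borel sets are analytic and a countable intersection of analytic sets is analytic by property~(a), this yields that $\Delta$ is analytic. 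The coordinate maps $z \mapsto p_0$ and $z \mapsto \gamma_n$ are continuous, hence Borel, and I would use this throughout to pull back sets and functions defined on the factor spaces $\P(\X)$ and $\P(\X \times \A)$ to $\Z$.

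For the constraint (\ref{cond-Delta1}), for each fixed $n$ the set $\{ \rho \in \P(\X \times \A) \mid \rho(\Gamma) = 1\}$ is analytic by property~(h) (applied with $a=1$, noting $\rho(\Gamma)\leq 1$), because $\Gamma$ is analytic. Its preimage under the Borel coordinate map $z \mapsto \gamma_n$ is analytic by property~(b), so the subset of $\Z$ on which (\ref{cond-Delta1}) holds for the given $n$ is analytic. This is the only place where analyticity (rather than Borel measurability) genuinely enters.

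For the constraint (\ref{cond-Delta2}), the subtlety is that it is required to hold for every $B \in \B(\X)$, an uncountable family. As in Proposition~\ref{prp-S}, I would reduce to a countable measure-determining class $\{B_i\}_{i \geq 1} \subset \B(\X)$, which exists because $\B(\X)$ is countably generated: since both sides of each equation in (\ref{cond-Delta2}), viewed as functions of $B$, define Borel probability measures on $\X$, it suffices to impose the equations for the $B_i$. For each fixed $i$ and $n$, the map $z \mapsto \gamma_n(B_i \times \A)$ is Borel by property~(g) applied to the indicator $\ind_{B_i \times \A}$; the map $z \mapsto p_0(B_i)$ is Borel for the same reason; and the map $z \mapsto \int_{\X \times \A} q(B_i \mid x, a) \, \gamma_{n-1}\big(d(x,a)\big)$ is Borel because $q(B_i \mid \cdot)$ is a Borel measurable function on $\X \times \A$ and property~(g) again applies. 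Hence each set on which an individual equation of (\ref{cond-Delta2}) holds is the preimage of the diagonal in $\R \times \R$ under a pair of Borel functions, and is therefore Borel.

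Finally I would assemble $\Delta$ as the intersection of the countably many analytic sets arising from (\ref{cond-Delta1}) and the countably many Borel sets arising from (\ref{cond-Delta2}), concluding analyticity by property~(a). The main obstacle is conceptually minor: it is the reduction of the uncountable system (\ref{cond-Delta2}) to a countable measure-determining subfamily, exactly the device already used in the proof of Proposition~\ref{prp-S}. Once this is in place, the remaining verifications are routine applications of the measurability facts recalled in Section~\ref{sec-proof-review}.
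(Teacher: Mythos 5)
Your proof is correct, and it matches the intended argument: the paper itself does not prove this lemma but quotes it from \cite[Lem.~1]{ShrB79} and \cite[Lem.~9.1]{bs}, where the proof is exactly the one you give, and which is also the template the paper reuses for Prop.~\ref{prp-S}. In particular, your three key moves are all sound: the sets cut out by (\ref{cond-Delta1}) are analytic by property (h) applied to the analytic graph $\Gamma$ (pulled back through the continuous coordinate projections via property (b)); the uncountable family of equalities (\ref{cond-Delta2}) legitimately reduces to a countable measure-determining subfamily because both sides are countably additive in $B$, yielding countably many Borel constraints by property (g); and the countable intersection is analytic by property (a).
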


We now construct a Borel measurable mapping that maps $\Delta$ onto $\S_m$.
First, we represent the identity mapping $\gamma \mapsto \gamma$ on $\P(\X \times \A)$ as a Borel measurable stochastic kernel:
Define $\mrho(\cdot \,|\, \cdot) : \B(\X \times \A) \times \P(\X \times \A) \to [0,1]$ by
\begin{equation} \label{def-mrho}
\mrho(B \,|\, \gamma):  = \gamma(B), \qquad B \in \B(\X \times \A), \, \gamma \in \P(\X \times \A).
\end{equation}
Then $\mrho$ is a Borel measurable stochastic kernel on $\X \times \A$ given $\P(\X \times \A)$ (cf.\ \cite[Def.\ 7.12]{bs}), and
by \cite[Cor.\ 7.27.1]{bs}, it can be decomposed as
\begin{equation} \label{eq-dec-meas}
 \mrho(d(x,a) \mid \gamma) = \mrho_2(da \mid x ; \gamma) \, \mrho_1(dx \mid \gamma), 
\end{equation} 
where $\mrho_1$ is a Borel measurable stochastic kernel on $\X$ given $\P(\X \times \A)$; for a fixed $\gamma$, $\mrho_1(dx \mid \gamma)$ is simply the marginal of $\gamma$ on $\X$; and $\mrho_2$ is a Borel measurable stochastic kernel on $\A$ given $\X \times \P(\X \times \A)$. 
We will use these kernels, instead of a direct decomposition of $\gamma$, to construct the mappings we need in this and subsequent analyses, because their explicit dependence on $\gamma$ and their measurability in $\gamma$ make it easier to study the measurability property of the resulting mappings.

Define a mapping $\zeta_m : \Z \to \P(\Omega)$ that maps
each $z = (p_0, \gamma_0, \gamma_1, \ldots) \in  \Z$ to the unique probability measure $p$ on $\B(\Omega)$ that satisfies the following: 
for all $n \geq 0$ and all sets of the form $B^{(n)} : = B_0 \times B_1 \cdots \times B_{n}$ with $B_i \in \B(\X \times \A)$ for $0 \leq i \leq n$,
\begin{align}
 p\{ h'_{n} \in B^{(n)} \} = \int \cdots \int \prod_{i=0}^{n} \ind_{B_i}(x_{i}, a_{i}) \, & \mrho_2(da_{n} \mid x_{n}; \gamma_{n}) \, q(dx_{n} \mid x_{n-1}, a_{n-1}) \, \cdots \notag \\
  \,  \cdots \, \, & \mrho_2(da_1 \mid x_1; \gamma_1) \, q(dx_1 \mid x_0, a_0) \, \mrho(d(x_0, a_0) \mid \gamma_0). \label{def-zetam}
\end{align}  

%\smallskip
\begin{lem} \label{lem-strm2a}
The mapping $\zeta_m : \Z \to \P(\Omega)$ defined through (\ref{def-zetam}) is Borel measurable. 
\end{lem}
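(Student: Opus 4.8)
The plan is to show $\zeta_m$ is Borel measurable by verifying that, viewed as a stochastic kernel on $\Omega$ given $\Z$, it assigns Borel-measurable probabilities to the finite-dimensional cylinder sets, and then to bootstrap this to all Borel sets with a $\pi$--$\lambda$ argument. First I would note that $\zeta_m$ is well defined: given $z=(p_0,\gamma_0,\gamma_1,\ldots)$, the probability measure $\mrho(\cdot\,|\,\gamma_0)=\gamma_0$ on $\X\times\A$ together with the Borel measurable kernels $q$ and $\mrho_2(\cdot\,|\,\cdot\,;\gamma_i)$ determines, by \cite[Prop.~7.45]{bs}, a unique $p\in\P(\Omega)$ with the finite-dimensional marginals prescribed in (\ref{def-zetam}); note also that this $p$ depends on $z$ only through $(\gamma_0,\gamma_1,\ldots)$, a continuous (projection) image of $z$, so it suffices to prove measurability in the $\gamma$-coordinates.

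By \cite[Prop.~7.26]{bs}, $\zeta_m$ is Borel measurable if and only if $z\mapsto \zeta_m(z)(B)$ is Borel measurable for every $B\in\B(\Omega)$. The collection $\mathcal D$ of all such $B$ is a $\lambda$-system, because for each fixed $z$ the set function $\zeta_m(z)$ is a probability measure: $\Omega\in\mathcal D$, and $\mathcal D$ is closed under proper differences and increasing countable unions (differences and monotone limits of Borel-measurable functions of $z$ remain Borel measurable). The finite-dimensional rectangles $B^{(n)}\times(\X\times\A)^\infty$, with $n\ge 0$ and $B^{(n)}=B_0\times\cdots\times B_n$, $B_i\in\B(\X\times\A)$, form a $\pi$-system generating $\B(\Omega)$; hence by Dynkin's theorem it suffices to prove that the right-hand side of (\ref{def-zetam}) is a Borel-measurable function of $(\gamma_0,\ldots,\gamma_n)$.

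To establish this I would integrate out the variables $a_n,x_n,a_{n-1},\ldots,x_1,a_0,x_0$ from the inside outward, applying property (f) of Section~\ref{sec-proof-review} at each step. Starting from the Borel-measurable integrand $\prod_{i=0}^n\ind_{B_i}(x_i,a_i)$ on $H'_n$, integration against the Borel kernel $\mrho_2(da_n\,|\,x_n;\gamma_n)$ (on $\A$ given $\X\times\P(\X\times\A)$), with the remaining coordinates and $\gamma_n$ treated as the base space, yields by (f) a function Borel measurable in $(x_0,a_0,\ldots,x_{n-1},a_{n-1},x_n,\gamma_n)$. Integrating next against $q(dx_n\,|\,x_{n-1},a_{n-1})$, then $\mrho_2(da_{n-1}\,|\,x_{n-1};\gamma_{n-1})$, and so on, and finally against $\mrho(d(x_0,a_0)\,|\,\gamma_0)$ (where property (g) applies for the last step, since $\mrho(\cdot\,|\,\gamma_0)=\gamma_0$), produces a function Borel measurable in $(\gamma_0,\ldots,\gamma_n)$, as required.

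The main obstacle is the bookkeeping in this inductive integration: one must check that at each stage the current integrand is jointly Borel measurable in all surviving state/action coordinates together with the relevant $\gamma_j$-parameters, and that each kernel qualifies as a Borel kernel on the base space indexed by those surviving coordinates even though it depends only on a few of them. Both points are routine---a kernel depending Borel-measurably on a subset of the coordinates is a Borel kernel on the full product after precomposition with the coordinate projection, and the integrands are finite products of Borel indicators---so property (f) applies verbatim at every step, and no $+\infty-\infty$ issues arise since all integrands are bounded.
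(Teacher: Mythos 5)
Your proposal is correct and takes essentially the same approach as the paper: both reduce the problem, via \cite[Prop.~7.26]{bs} and the fact that the rectangle cylinder sets generate $\B(\Omega)$ (your Dynkin $\pi$--$\lambda$ step is just the paper's algebra/monotone-class reduction in different form), to showing the iterated integral in (\ref{def-zetam}) is Borel in $(\gamma_0,\ldots,\gamma_n)$, which both arguments obtain by repeated application of \cite[Prop.~7.29]{bs} (property (f)). Your invocation of property (g) at the last step is a harmless imprecision, since that step is again an instance of (f) with the identity kernel $\mrho$.
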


\begin{proof}
The $\sigma$-algebra $\B (\Omega)$ is generated by measurable rectangles, the finite disjoint unions of which form an algebra. 
From this and \cite[Prop.~7.26]{bs}, it follows that, for $\zeta_m$ to be a Borel measurable mapping from $\Z$ into $\P(\Omega)$, it suffices that the r.h.s.\ of its defining equation (\ref{def-zetam}) is a (real-valued) Borel measurable function of $z = (p_0, \gamma_0, \gamma_1, \ldots)$, for each set $B^{(n)}$ involved in the definition.
Now, for each $B^{(n)}$, the iterated integral in (\ref{def-zetam}) involves Borel sets $B_i$ and Borel measurable stochastic kernels $q, \mrho$, and $\mrho_2$, so it is a Borel measurable function of $(\gamma_0, \gamma_1, \ldots, \gamma_n)$ by a repeated application of \cite[Prop.~7.29]{bs}. The desired conclusion then follows.
\end{proof}

\begin{lem} \label{lem-strm2b}
The set $\S_m = \zeta_m(\Delta)$. In fact, restricted to $\Delta$, $\zeta_m$ is one-to-one and has a Borel measurable inverse, so $\S_m$ and $\Delta$ are Borel isomorphic.
\end{lem}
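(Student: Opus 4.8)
The plan is to establish the three claims of Lemma~\ref{lem-strm2b} in sequence: first that $\zeta_m(\Delta) \subseteq \S_m$, then that $\S_m \subseteq \zeta_m(\Delta)$, and finally that $\zeta_m$ restricted to $\Delta$ is a Borel isomorphism onto its image. For the inclusion $\zeta_m(\Delta) \subseteq \S_m$, I would take any $z = (p_0, \gamma_0, \gamma_1, \ldots) \in \Delta$ and exhibit an explicit Markov policy whose induced strategic measure restricts to $\zeta_m(z)$. The natural candidate is the stationary-in-form kernel $\mu_n(da_n \mid x_n) := \mrho_2(da_n \mid x_n; \gamma_n)$, which is universally (indeed Borel) measurable in $x_n$ for each fixed $\gamma_n$ by the decomposition (\ref{eq-dec-meas}); it depends only on $x_n$, so $\pi := (\mu_0, \mu_1, \ldots)$ is Markov. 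One must check that each $\mu_n$ respects the control constraint $\mu_n(A(x_n) \mid x_n) = 1$: this follows because $\gamma_n(\Gamma) = 1$ from (\ref{cond-Delta1}), so the conditional $\mrho_2(\cdot \mid x_n; \gamma_n)$ is supported on $A(x_n)$ for $\mrho_1(\cdot \mid \gamma_n)$-almost every $x_n$, and one patches the kernel on the null set using a fixed nonrandomized stationary policy exactly as in the proof of Lemma~\ref{lem-strm1}. Comparing the recursive integral defining $\Pr^\pi_{p_0}$ on measurable rectangles with the defining equation (\ref{def-zetam}) of $\zeta_m$, and using (\ref{cond-Delta2}) to identify the marginals $\gamma_n$ as the true marginals of $(x_n, a_n)$ under $\pi$, shows the two agree on $\B(\Omega)$.

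For the reverse inclusion $\S_m \subseteq \zeta_m(\Delta)$, I would start from an arbitrary $p \in \S_m$, which by definition is the restriction of $\Pr^\pi_{p_0}$ for some $\pi \in \Pi_m$ and $p_0 \in \B(\X)$. Let $\gamma_n$ be the marginal of $(x_n, a_n)$ and set $z := (p_0, \gamma_0, \gamma_1, \ldots)$; by construction $z \in \Delta$ since the defining conditions (\ref{cond-Delta1})-(\ref{cond-Delta2}) are exactly the constraints on the induced marginals of a Markov policy. It then remains to verify $\zeta_m(z) = p$. The key point is that for a \emph{Markov} policy, the strategic measure is completely determined by the marginals $\{\gamma_n\}$ together with the transition kernel $q$, because the action at stage $n$ depends only on $x_n$ and its conditional law given $x_n$ is recoverable from $\gamma_n$ via the disintegration $\mrho_2$. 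I would make this precise by an induction on $n$ showing $p$ and $\zeta_m(z)$ assign equal probability to every rectangle $B^{(n)}$, using the uniqueness of regular conditional distributions and the fact that $\mrho_2(\cdot \mid x; \gamma_n)$ is a version of the conditional law of $a_n$ given $x_n = x$ under $p$.

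The isomorphism claim combines the foregoing with measurability facts already in hand. Borel measurability of $\zeta_m$ on all of $\Z$ (hence on $\Delta$) is Lemma~\ref{lem-strm2a}. Injectivity on $\Delta$ follows because the map recovering $z$ from $p = \zeta_m(z)$ is well defined: the initial marginal $p_0$ and each $\gamma_n$ are read off as marginal distributions of $p$, so distinct $z \in \Delta$ yield distinct images. For the inverse, I would exhibit the projection $\psi_m : \P(\Omega) \to \Z$ sending $p$ to $(\text{marg}_{x_0}\, p, \text{marg}_{(x_0,a_0)}\, p, \text{marg}_{(x_1,a_1)}\, p, \ldots)$ and argue it is Borel measurable by the same coordinate-projection argument used for $\psi$ in Lemma~\ref{lem-strat-m0}, invoking \cite[Prop.~7.26 and Cor.~7.29.1]{bs}. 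Since $\psi_m \circ \zeta_m = \mathrm{id}$ on $\Delta$, $\psi_m$ restricts to a Borel measurable inverse of $\zeta_m|_\Delta$, and a Borel-measurable bijection between Borel/analytic subsets of Borel spaces with Borel inverse is a Borel isomorphism.

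I expect the main obstacle to be the reverse-inclusion step $\S_m \subseteq \zeta_m(\Delta)$, specifically the careful bookkeeping needed to show that the disintegration kernel $\mrho_2(\cdot \mid x_n; \gamma_n)$ genuinely reproduces the conditional action law of the \emph{original} Markov policy $\pi$ rather than some other policy with the same marginals. The subtlety is that $\mrho_2$ is a fixed canonical disintegration depending only on $\gamma_n$, while $\pi$'s kernel $\mu_n$ is a priori an arbitrary version; one must argue these agree $\gamma_n$-almost everywhere and that the almost-everywhere discrepancy is invisible when integrated against the $q$-transitions in (\ref{def-zetam}). This is where the completion-of-measures fact (Lemma~\ref{lem-umset} and Remark~\ref{rmk-lem-umset}) and the uniqueness of conditional distributions do the real work, and where I would spend the most care to avoid a gap.
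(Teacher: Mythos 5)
Your proposal is correct and takes essentially the same route as the paper: both rest on the $\gamma_n$-a.e.\ identification of the Markov policy's kernels $\mu_n(\cdot \mid x)$ with the canonical disintegration $\mrho_2(\cdot \mid x; \gamma_n)$, an induction over measurable rectangles comparing the iterated integrals, and recovery of $z$ from the marginals of $p$ (Borel measurable by \cite[Prop.~7.26, Cor.~7.29.1]{bs}) to get the inverse. The only cosmetic difference is that for $\zeta_m(\Delta) \subseteq \S_m$ you construct the Markov policy explicitly from the disintegration kernels with null-set patching, whereas the paper invokes the known characterization of $\Delta$ (cf.\ \cite[Prop.~9.2]{bs}) that every element of $\Delta$ is induced by some Markov policy and then reuses the same identification argument---you are merely inlining that cited step.
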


\begin{proof}
If $p \in \S_m$ and $z = (p_0, \gamma_0, \gamma_1, \ldots) \in \Delta$ are both induced by a Markov policy $\pi =(\mu_0, \mu_1, \ldots) \in \Pi_m$ and initial distribution $p_0 \in \P(\X)$,  let us prove $p = \zeta_m(z)$. For all $n \geq 0$, since $\gamma_n$ corresponds to the marginal distribution of $(x_n, a_n)$ under $\pi$,  it satisfies that 
$$\gamma_n(B \times D) = \int_{B} \mu_n(D \mid x) \, \mrho_1(dx \mid \gamma_n), \qquad \forall \,  B \in \B(\X), \  D \in \B(\A).$$
By comparing this relation with (\ref{eq-dec-meas}) for $\gamma = \gamma_n$,  there must exist some set $E_n \in \B(\X)$ such that   
\begin{equation} \label{eq-prf-lem-strm2b-1}
  \gamma_n(E_n \times \A) = 0 \quad \text{and} \quad \mu_n(da \,|\, x) = \mrho_2(da \,|\, x; \gamma_n) \ \ \ \forall \,  x \not\in E_n.
\end{equation}  
Then, by induction on $n$ and using (\ref{eq-prf-lem-strm2b-1}) and (\ref{cond-Delta2}), it can be verified 
that the iterated integral in the r.h.s.\ of (\ref{def-zetam}) gives the same value if for every $n \geq 1$, $\mrho_2(\cdot \,|\, \cdot \, ; \gamma_n)$ in this integral is replaced by $\mu_n(\cdot \,|\, \cdot)$. Since $\zeta_m(z)$ is defined by (\ref{def-zetam}),
this implies that $\zeta_m(z)$ coincides with the restriction of $\Pr^\pi_{p_0}$ on $\B(\Omega)$, which is $p$. Thus $p = \zeta_m(z)$, and we have proved $\S_m \subset \zeta_m(\Delta)$. But each point in $\Delta$ can be induced by some Markov policy and initial distribution. Therefore, $\S_m = \zeta_m(\Delta)$.

For any $z = (p_0, \gamma_0, \gamma_1, \ldots) \in \Delta$, by (\ref{cond-Delta2}) and the definition (\ref{def-zetam}) for $\zeta_m$, the marginal distribution of $(x_n, a_n)$ w.r.t.\ $\zeta_m(z)$ is $\gamma_n$. So, if $z, z' \in \Delta$ and $z \not = z'$, then $\zeta_m(z) \not= \zeta_m(z')$.
This shows that $\zeta_{m, \Delta}$, the restriction of $\zeta_m$ to $\Delta$, has an inverse. 
Now, for $p \in \S_m$, $\zeta_{m,\Delta}^{-1}(p)$ is simply given by the sequence $(p_0, \gamma_0, \gamma_1, \ldots)$ where $p_0$ is the marginal distribution of $x_0$ and $\gamma_n$ the marginal distribution of $(x_n, a_n)$ w.r.t.\ $p$. The mapping that maps $p \in \P(\Omega)$ to such a sequence of its marginals is Borel measurable in $p$ by \cite[Prop.\ 7.26 and Cor.\ 7.29.1]{bs}. This proves the lemma.
\end{proof}

\begin{prop} \label{prp-Sm}
The set $\S_m$ is analytic.
\end{prop}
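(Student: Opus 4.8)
The plan is to assemble the proof of Proposition~\ref{prp-Sm} directly from the machinery established in the three preceding lemmas, since all the substantive work has already been done there. First I would recall that Lemma~\ref{lem-strm2b} identifies the set $\S_m$ as the image $\zeta_m(\Delta)$ of the set $\Delta$ of admissible marginal-distribution sequences under the mapping $\zeta_m$. Next I would invoke the stated fact (from \cite[Lem.~1]{ShrB79}) that $\Delta$ is an analytic subset of the Borel space $\Z = \P(\X) \times (\P(\X \times \A))^\infty$. Finally, Lemma~\ref{lem-strm2a} establishes that $\zeta_m : \Z \to \P(\Omega)$ is a Borel measurable mapping between Borel spaces.

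With these three ingredients in place, the conclusion is immediate by property~(b) in Section~\ref{sec-proof-review}: the Borel image of an analytic set is analytic. Concretely, since $\Delta$ is analytic and $\zeta_m$ is Borel measurable, the set $\zeta_m(\Delta)$ is analytic; and since $\S_m = \zeta_m(\Delta)$ by Lemma~\ref{lem-strm2b}, it follows that $\S_m$ is analytic. This completes the proof. One could alternatively emphasize the stronger structural fact recorded in Lemma~\ref{lem-strm2b}, namely that $\zeta_m$ restricted to $\Delta$ is a Borel isomorphism onto $\S_m$, but for the analyticity claim the one-directional image statement already suffices.

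I do not anticipate a genuine obstacle here, because the proposition is essentially a corollary that packages the prior lemmas. The only point demanding a moment's care is ensuring that the hypotheses of the ``Borel image of analytic is analytic'' principle are exactly met: one needs $\zeta_m$ to be Borel measurable \emph{as a map between Borel spaces}, which is guaranteed because both $\Z$ and $\P(\Omega)$ are Borel spaces (being countable products and spaces of probability measures over Borel spaces, cf.\ \cite[Chap.~7.4, Prop.~7.13]{bs}) and Lemma~\ref{lem-strm2a} supplies the measurability. Since the image of the analytic $\Delta$ lands inside the Borel space $\P(\Omega)$, property~(b) applies verbatim. Thus the proof is a short three-line deduction, and the real content lies entirely in the construction of $\zeta_m$ and the characterization $\S_m = \zeta_m(\Delta)$ carried out in the lemmas.
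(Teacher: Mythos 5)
Your proposal is correct and matches the paper's own proof essentially verbatim: the paper likewise combines the analyticity of $\Delta$, the Borel measurability of $\zeta_m$ from Lemma~\ref{lem-strm2a}, and the identity $\S_m = \zeta_m(\Delta)$ from Lemma~\ref{lem-strm2b}, then applies the fact that Borel images of analytic sets are analytic. Your added remark that the Borel-isomorphism part of Lemma~\ref{lem-strm2b} is not needed for this deduction is also accurate.
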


\begin{proof}
Since $\Delta$ is analytic \cite[Lem.~9.1]{bs} and Borel images of analytic sets are also analytic sets \cite[Prop.~7.40]{bs}, Lemmas~\ref{lem-strm2a} and~\ref{lem-strm2b} together imply that $\S_m$ is analytic. 
\end{proof}

\subsubsection{The Set $\S_s$}

Similarly to the preceding case of $\S_m$, to prove $\S_s$ is analytic, we will show that it is the image of an analytic set under a Borel measurable mapping.

Let $G$ be the set of all $\ugam: = (\gamma_0, \gamma_1, \ldots) \in \big(\P(\X \times \A)\big)^\infty$ such that for all $n \geq 0$,
\begin{equation} \label{def-setG}
  \gamma_n (d(x,a)) = \mrho_2(da \mid x; \tilde \gamma) \, \mrho_1(dx \mid \gamma_n), \qquad \text{where} \ \ \tilde \gamma : = \sum_{k=0}^\infty 2^{-k-1} \gamma_{k}.
\end{equation}  
Define a mapping $\zeta_s : \Z \to \P(\Omega)$ that maps
each $z = (p_0, \gamma_0, \gamma_1, \ldots) \in  \Z$ to the unique probability measure $p$ on $\B(\Omega)$ that satisfies the following: with $\tilde \gamma \in \P(\X \times \A)$ be as in (\ref{def-setG}), for all $n \geq 0$ and all sets of the form $B^{(n)} : = B_0 \times B_1 \cdots \times B_{n}$ with $B_i \in \B(\X \times \A)$ for $0 \leq i \leq n$,
\begin{align}
 p\{ h'_{n} \in B^{(n)} \} = \int \cdots \int \prod_{i=0}^{n} \ind_{B_i}(x_{i}, a_{i}) \, & \mrho_2(da_{n} \mid x_{n}; \tilde \gamma) \, q(dx_{n} \mid x_{n-1}, a_{n-1}) \, \cdots \notag \\
  \,  \cdots \, \, & \mrho_2(da_1 \mid x_1; \tilde \gamma) \, q(dx_1 \mid x_0, a_0) \, \mrho_2(da_0 \mid x_0; \tilde \gamma) \, \mrho_1(dx_0 \mid \gamma_0). \label{def-zetas}
\end{align}

\begin{lem} \label{lem-strm3a}
The set $G$ defined by (\ref{def-setG}) is a Borel subset of $\big(\P(\X \times \A)\big)^\infty$. The mapping $\zeta_s : \Z \to \P(\Omega)$ defined through (\ref{def-zetas}) is Borel measurable. 
\end{lem}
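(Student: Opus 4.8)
The plan is to handle both claims with a single device: every ingredient in (\ref{def-setG}) and (\ref{def-zetas}) is assembled from the Borel measurable kernels $q$, $\mrho_1$, $\mrho_2$ together with the auxiliary measure $\tilde\gamma$, so once I check that $\ugam \mapsto \tilde\gamma$ is Borel measurable, the conclusions will follow from repeated applications of the measurability-of-integrals results \cite[Prop.~7.26, Prop.~7.29, Cor.~7.29.1]{bs}, in the spirit of the proof of Lemma~\ref{lem-strm2a}. To see that $\ugam = (\gamma_0,\gamma_1,\ldots) \mapsto \tilde\gamma = \sum_{k\geq 0} 2^{-k-1}\gamma_k$ is Borel, note that for each fixed $B \in \B(\X\times\A)$ the map $\ugam \mapsto \tilde\gamma(B) = \sum_{k\geq 0} 2^{-k-1}\gamma_k(B)$ is a countable sum of Borel functions, hence Borel; by \cite[Prop.~7.26]{bs} this makes $\ugam\mapsto\tilde\gamma$ a Borel map into $\P(\X\times\A)$.

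For the first claim, I would fix a countable family $\{f_j\}_{j\geq 1}$ of indicators of Borel sets $C_j\subset \X\times\A$ forming a measure-determining class (such a family exists since $\B(\X\times\A)$ is countably generated). For each $n$, the defining condition of $G$ is the equality of the two Borel probability measures $\gamma_n$ and $\nu_n(d(x,a)) := \mrho_2(da\mid x;\tilde\gamma)\,\mrho_1(dx\mid\gamma_n)$, which holds iff $\gamma_n(C_j)=\nu_n(C_j)$ for all $j$. The map $\ugam\mapsto\gamma_n(C_j)$ is Borel by \cite[Prop.~7.26]{bs}. For $\ugam\mapsto\nu_n(C_j)$, I write
\[
   \nu_n(C_j) = \int_\X \Big( \int_\A f_j(x,a)\,\mrho_2(da\mid x;\tilde\gamma)\Big)\,\mrho_1(dx\mid\gamma_n),
\]
and view it as a function of $(\gamma_n,\tilde\gamma)$: the inner integral is a Borel function $H_j(x;\tilde\gamma)$ of $(x,\tilde\gamma)$ by \cite[Prop.~7.29]{bs}, and integrating $H_j$ against the Borel measurable stochastic kernel $(\gamma,\tilde\gamma)\mapsto\mrho_1(\cdot\mid\gamma)$ on $\X$ given $\P(\X\times\A)\times\P(\X\times\A)$ yields, again by \cite[Prop.~7.29]{bs}, a Borel function of $(\gamma_n,\tilde\gamma)$. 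Composing with the Borel map $\ugam\mapsto(\gamma_n,\tilde\gamma)$ shows $\ugam\mapsto\nu_n(C_j)$ is Borel. Hence each set $\{\ugam\mid\gamma_n(C_j)=\nu_n(C_j)\}$ is Borel, and $G=\bigcap_{n\geq 0}\bigcap_{j\geq 1}\{\ugam\mid\gamma_n(C_j)=\nu_n(C_j)\}$ is Borel.

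For the second claim, I proceed as in Lemma~\ref{lem-strm2a}: since $\B(\Omega)$ is generated by the measurable rectangles, whose finite disjoint unions form an algebra, it suffices by \cite[Prop.~7.26]{bs} to show that for each $B^{(n)}=B_0\times\cdots\times B_n$ the right-hand side of (\ref{def-zetas}) is a real-valued Borel function of $z=(p_0,\gamma_0,\gamma_1,\ldots)$. That iterated integral is built from the indicators $\ind_{B_i}$, the Borel kernels $q$ and $\mrho_1$, and the kernel $\mrho_2(\cdot\mid\cdot;\tilde\gamma)$ in which the single common parameter $\tilde\gamma$ occurs at every stage. Carrying $\tilde\gamma$ along as a parameter and integrating out $a_n,x_n,\ldots,a_0,x_0$ in turn, each step preserves joint Borel measurability in the remaining variables and in $(\gamma_0,\tilde\gamma)$ by repeated application of \cite[Prop.~7.29]{bs}; this produces a Borel function of $(\gamma_0,\tilde\gamma)$, and composing with the Borel map $\ugam\mapsto(\gamma_0,\tilde\gamma)$ gives the desired Borel dependence on $z$.

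Routine bookkeeping aside, the one point requiring care—and the only real difference from the $\zeta_m$ case of Lemma~\ref{lem-strm2a}—is that the same $\tilde\gamma$ threads through every $\mrho_2$ factor and is itself a Borel function of the entire sequence $\ugam$. The crux is therefore to keep $\tilde\gamma$ as an inert auxiliary argument throughout the nested integration, relying on the joint measurability in the kernel parameter furnished by \cite[Prop.~7.29]{bs}, and to substitute $\tilde\gamma=\sum_k 2^{-k-1}\gamma_k$ only at the end via the Borel map established at the outset.
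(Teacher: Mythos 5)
Your proposal is correct and takes essentially the same route as the paper's own proof: you first establish Borel measurability of $\ugam \mapsto \tilde \gamma$ via evaluations on Borel sets and \cite[Prop.~7.26]{bs}, then express $G$ as a countable intersection using a measure-determining class, with each side of the defining equalities a Borel function of $(\gamma_n, \tilde\gamma)$ composed with the Borel map $\ugam \mapsto (\gamma_n,\tilde\gamma)$, and finally handle $\zeta_s$ by carrying $\tilde\gamma$ as a free parameter through the iterated integral (repeated \cite[Prop.~7.29]{bs}) before substituting $\tilde\gamma = \psi(\ugam)$. This matches the paper's proof of Lemma~\ref{lem-strm3a} step for step, including your correct identification of the shared parameter $\tilde\gamma$ as the only essential difference from the $\zeta_m$ case.
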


\begin{proof}
First, we prove that $\psi: \ugam \mapsto \sum_{k=0}^\infty 2^{-k-1} \gamma_k$ is a Borel measurable mapping from the space $\big( \P(\X \times \A) \big)^\infty$ into $\P(\X \times \A)$.
By \cite[Prop.~7.26]{bs}, it suffices to prove that for each $B \in \B(\X \times \A)$, 
$\psi(\ugam) (B) = \sum_{k=0}^\infty 2^{-k-1} \gamma_k(B)$ 
is a real-valued Borel measurable function of $\ugam$. Now, for each $k \geq 0$, $\gamma_k(B)$ is a Borel measurable function of $\gamma_k$ \cite[Cor.~7.29.1]{bs}, so $\sum_{k=0}^n 2^{-k-1} \gamma_k(B)$ 
is a Borel measurable function of $(\gamma_0, \gamma_1, \ldots, \gamma_n)$ for all $n \geq 0$. Since $\psi(\ugam) (B) = \lim_{n \to \infty} \sum_{k=0}^n 2^{-k-1} \gamma_k(B)$, it is a Borel measurable function of $\ugam$ as desired.

Consider now the set $G$. Let $\{f_i\}_{i \geq 1}$ be the indicator functions of a countable family of Borel subsets of $\X \times \A$ that form a measure determining class. By (\ref{def-setG}), $G = \cap_{n \geq 0} B_n$ where $B_n$ consists of those $\ugam$ that satisfy
\begin{equation} \label{eq-prf-setG1}
   \int _{\X \times \A} f_i \, d\gamma_n = \int_\X \int_\A f_i(x,a) \, \mrho_2\big(da \mid x; \psi(\ugam) \big)\, \mrho_1(dx \mid \gamma_n), \qquad \forall \, i \geq 1.
\end{equation}
For each $i \geq 1$, the l.h.s.\ of (\ref{eq-prf-setG1}) is a Borel measurable function of $\gamma_n$ \cite[Cor.~7.29.1]{bs}, and the r.h.s.\ of (\ref{eq-prf-setG1}) can be written as $\phi\big(\gamma_n, \psi(\ugam)\big)$ for the function 
$$\phi(\gamma, \gamma') : = \int_\X \int_\A f_i(x,a) \, \mrho_2\big(da \mid x; \gamma' \big)\, \mrho_1(dx \mid \gamma), \qquad (\gamma, \gamma') \in \big(\P(\X \times \A)\big)^2.$$
Since the function $f_i$ and the stochastic kernels $\mrho_1, \mrho_2$ are all Borel measurable, $\phi$ is Borel measurable by \cite[Prop.~7.29]{bs}. Combining this with the Borel measurability of $\psi$ proved earlier, it follows that $\phi\big(\gamma_n, \psi(\ugam)\big)$ is a Borel measurable function of $\ugam$. Thus, for each $i \geq 1$, both sides of (\ref{eq-prf-setG1}) are Borel measurable functions of $\ugam$. Consequently, $B_n$ is the intersection of countably many Borel sets and is therefore Borel measurable. Then $G= \cap_{n \geq 0} B_n$ is also Borel measurable. 

Similarly to the proof of Lemma~\ref{lem-strm2a}, by \cite[Prop.~7.26]{bs}, for the mapping $\zeta_s$ to be Borel measurable, it suffices that the iterated integral in its defining equation (\ref{def-zetas}) is a (real-valued) Borel measurable function of $(p_0, \ugam)$, for each set $B^{(n)}$ involved in the definition.
Now for each $B^{(n)}$, similarly to the preceding proof, if we treat $\tilde \gamma$ in (\ref{def-zetas}) as a free variable, then, since the sets $B_i$ and the stochastic kernels involved are all Borel measurable, the iterated integral in (\ref{def-zetas}) is a Borel measurable function of $(\gamma_0, \tilde \gamma)$ by a repeated application of \cite[Prop.~7.29]{bs}.
With $\tilde \gamma = \psi(\ugam)$ and $\psi$ being Borel measurable as proved earlier, it then follows that for each $B^{(n)}$, the iterated integral in (\ref{def-zetas}) is a Borel measurable function of $\ugam$.
This proves that $\zeta_s$ is Borel measurable.
\end{proof}

\begin{lem} \label{lem-strm3b}
The set $\S_s = \zeta_s(\Delta_s)$, where $\Delta_s : = \Delta \cap (\P(\X) \times G)$. 
Moreover, restricted to $\Delta_s$, $\zeta_s$ is one-to-one and has a Borel measurable inverse, so $\S_s$ and $\Delta_s$ are Borel isomorphic.
\end{lem}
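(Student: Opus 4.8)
The plan is to follow the structure of the proof of Lemma~\ref{lem-strm2b} for $\zeta_m$, establishing the two inclusions of $\S_s = \zeta_s(\Delta_s)$ separately and then verifying injectivity together with the existence of a Borel measurable inverse. The essential new feature is that $\zeta_s$ uses the \emph{same} action kernel $\mrho_2(da \mid x; \tilde\gamma)$ at every stage, so the argument must exploit the defining property (\ref{def-setG}) of $G$ to identify this common kernel with a single stationary policy. Throughout I would write $\nu_n := \mrho_1(\cdot \mid \gamma_n)$ for the state marginal of $\gamma_n$ and $\tilde\nu := \mrho_1(\cdot \mid \tilde\gamma) = \sum_{k \geq 0} 2^{-k-1}\nu_k$ for that of $\tilde\gamma$, noting that each $\nu_n \ll \tilde\nu$ since $\tilde\nu \geq 2^{-n-1}\nu_n$.

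For the inclusion $\S_s \subseteq \zeta_s(\Delta_s)$, I would take $p \in \S_s$ induced by some $\mu \in \Pi_s$ (that is, $\pi = (\mu, \mu, \ldots)$) and $p_0 \in \P(\X)$, and let $z = (p_0, \gamma_0, \gamma_1, \ldots) \in \Delta$ be its marginal sequence. Since each $\gamma_n$ disintegrates as $\gamma_n(d(x,a)) = \mu(da \mid x)\,\nu_n(dx)$, the mixture satisfies $\tilde\gamma(d(x,a)) = \mu(da \mid x)\,\tilde\nu(dx)$, and by uniqueness of disintegrations the Borel version $\mrho_2(\cdot \mid \cdot; \tilde\gamma)$ agrees with $\mu(\cdot \mid \cdot)$ for $\tilde\nu$-a.e.\ $x$, hence for $\nu_n$-a.e.\ $x$. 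This is exactly (\ref{def-setG}) for every $n$, so $z \in \Delta_s$. I would then show $\zeta_s(z) = p$ by the induction on $n$ used in Lemma~\ref{lem-strm2b}: replacing $\mrho_2(\cdot \mid \cdot; \tilde\gamma)$ by $\mu(\cdot \mid \cdot)$ in the iterated integral (\ref{def-zetas}) leaves its value unchanged, because the state marginal at stage $n$ is $\nu_n$ and the two kernels agree $\nu_n$-a.e.; after the replacement the integral equals $\Pr^\pi_{p_0}\{h'_n \in B^{(n)}\}$.

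For the reverse inclusion $\zeta_s(\Delta_s) \subseteq \S_s$, I would take $z \in \Delta_s$ and set $\mu := \mrho_2(\cdot \mid \cdot; \tilde\gamma)$, a Borel stochastic kernel on $\A$ given $\X$. From $z \in \Delta$ we have $\gamma_n(\Gamma) = 1$, and combined with (\ref{def-setG}) this gives $\int_\X \mu(A(x)\mid x)\,\nu_n(dx) = 1$, hence $\mu(A(x) \mid x) = 1$ for $\nu_n$-a.e.\ $x$ for every $n$, and so for $\tilde\nu$-a.e.\ $x$. Since $\mu$ need not respect the control constraint \emph{everywhere}, I would patch it, exactly as in Lemma~\ref{lem-strm1}, on the universally measurable set $E := \{x \mid \mu(A(x) \mid x) < 1\}$ using a fixed $\mu^o \in \Pi_s$, obtaining a genuine stationary policy $\hat\mu \in \Pi_s$. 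Because $\tilde\nu(E) = 0$ and therefore $\nu_n(E) = 0$ for all $n$, an induction using (\ref{cond-Delta2}) shows that the state marginal at stage $n$ under $\zeta_s(z)$ is $\nu_n$ and the $(x_n, a_n)$-marginal is $\gamma_n$; consequently the modification on $E$ is never charged along the trajectory and $\zeta_s(z)$ coincides with the restriction of $\Pr^{\hat\mu}_{p_0}$ to $\B(\Omega)$, with $p_0 = \nu_0$. Hence $\zeta_s(z) \in \S_s$.

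Finally, injectivity and the Borel measurable inverse follow as in Lemma~\ref{lem-strm2b}: for $z \in \Delta_s$ the $(x_n, a_n)$-marginal of $\zeta_s(z)$ is $\gamma_n$ and the $x_0$-marginal is $p_0$ (the latter determined by $\gamma_0$ via (\ref{cond-Delta2})), so $\zeta_s$ recovers $z$ and is one-to-one, while the map sending $p$ to its sequence of marginals is Borel measurable by \cite[Prop.~7.26 and Cor.~7.29.1]{bs}. Since $\Delta_s = \Delta \cap (\P(\X) \times G)$ is analytic (Lemma~\ref{lem-strm3a} gives $G$ Borel and $\Delta$ is analytic), the restriction of the Borel map $\zeta_s$ to $\Delta_s$ is then one-to-one with Borel inverse, so $\S_s$ and $\Delta_s$ are Borel isomorphic. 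I expect the main obstacle to lie in the reverse inclusion: one must show that the single common kernel $\mrho_2(\cdot\mid\cdot;\tilde\gamma)$, which is guaranteed only to satisfy the control constraint and to agree with the intended stationary policy \emph{almost everywhere}, can be repaired on a null set without perturbing the induced measure, and this hinges on the inductive verification that none of the exceptional null sets is visited along the trajectory.
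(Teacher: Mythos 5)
Your proposal is correct and takes essentially the same route as the paper's proof: both directions rest on the disintegration/uniqueness argument identifying $\mrho_2(\cdot\mid\cdot;\tilde\gamma)$ with a single stationary kernel up to a null set, patching with a fixed $\mu^o\in\Pi_s$ on the exceptional set, and an induction via (\ref{cond-Delta2}) and (\ref{def-setG}) showing the null set is never charged, with injectivity and the Borel inverse obtained exactly as in Lemma~\ref{lem-strm2b}. The only immaterial variations are that you argue stage-by-stage using $\nu_n\ll\tilde\nu$ where the paper first sums weighted versions of (\ref{cond-Delta1}) to get $\tilde\gamma(\Gamma)=1$, and your exceptional set is universally measurable rather than Borel, which suffices for the patching.
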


\begin{proof}
First, suppose that $p \in \S_s$ and $z = (p_0, \gamma_0, \gamma_1, \ldots) \in \Delta$ are both induced by a stationary policy $\mu \in \Pi_s$  and initial distribution $p_0 \in \P(\X)$.  We show that $\ugam : = (\gamma_0, \gamma_1, \ldots) \in G$ and $p = \zeta_s(z)$. 

Let $\tilde \gamma = \sum_{n = 0}^\infty 2^{-n-1} \gamma_n$. 
Since $\gamma_n$ corresponds to the marginal distribution of $(x_n, a_n)$ under $\mu$,  it satisfies that 
\begin{equation} \label{eq-prf-Ss-1}
 \gamma_n(B \times D) = \int_{B} \mu(D \mid x)  \, \mrho_1(dx \mid \gamma_n), \qquad \forall \, B \in \B(\X), \, D \in \B(\A), \, n \geq 0.
\end{equation} 
By combining these equalities for all $n$, we have
$$  \tilde \gamma (B \times D) = \sum_{n=0}^\infty 2^{-n-1} \gamma_n(B \times D) = \int_{B} \mu(D \mid x) \, \mrho_1(dx \mid \tilde \gamma), \qquad \forall \, B \in \B(\X), \, D \in \B(\A).$$
Comparing this with (\ref{eq-dec-meas}) for $\gamma = \tilde \gamma$, it follows that there exists a set $E \in \B(\X)$ such that
\begin{equation} \label{eq-prf-Ss-2} 
  \tilde \gamma(E \times \A) = 0 \quad \text{and} \quad \mu(da \mid x) = \mrho_2(da \mid x; \, \tilde \gamma) \ \ \   \forall  \, x \not\in E.
\end{equation}   

Equation (\ref{eq-prf-Ss-2}) has two implications. First, since any set of $\tilde \gamma$-measure zero has measure zero w.r.t.\ every $\gamma_n$, (\ref{eq-prf-Ss-2}) together with (\ref{eq-prf-Ss-1}) implies that $\gamma_n$ satisfies (\ref{def-setG}) for every $n \geq 0$. Therefore, $\ugam \in G$ and $(p_0, \ugam) \in \Delta \cap (\P(\X) \times G)$ as desired. 

Secondly, using (\ref{eq-prf-Ss-2}) together with (\ref{def-setG}) and (\ref{cond-Delta2}), it can be verified by induction on $n$ that the iterated integral in the r.h.s.\ of the defining equation (\ref{def-zetas}) for $\zeta_s$ gives the same value if we replace $\mrho_2(\cdot \,|\, \cdot, \tilde \gamma)$ in this integral by $\mu(\cdot \,|\, \cdot)$. This implies that $\zeta_s(p_0, \ugam)$ coincides with the restriction of $\Pr^\mu_{p_0}$ to $\B(\Omega)$, which is $p$, so $p = \zeta_s(p_0, \ugam)$. We have thus proved that 
$\S_s \subset \zeta_s \big(\Delta \cap \big(\P(\X) \times G)\big)$. 

To prove the reverse inclusion, consider any $(p_0, \ugam) \in \Delta$ with $\ugam = (\gamma_0, \gamma_1, \ldots) \in G$. 
Let $\tilde \gamma := \sum_{n = 0}^\infty 2^{-n-1} \gamma_n$. 
Since $(p_0, \ugam) \in \Delta$, by summing up weighted versions of (\ref{cond-Delta1}) over $n$, we have $\tilde \gamma (\Gamma) = 1$. 
Since
$\tilde \gamma(d(x,a)) = \mrho_2(da \,|\, x; \tilde \gamma) \, \mrho_1(dx \,|\, \tilde \gamma)$, this implies the existence of a set $E \in \B(\X)$ such that
\begin{equation} \label{eq-prf-Ss-3} 
  \tilde \gamma(E \times \A) = 0 \quad \text{and} \quad  \mrho_2(A(x) \mid x; \, \tilde \gamma)  = 1 \ \ \   \forall  \, x \not\in E.
\end{equation} 
For some fixed $\mu^o \in \Pi_s$, define
$$ \mu(\cdot \mid x) : = \begin{cases} 
  \mrho_2(\cdot \mid x; \tilde \gamma), & \text{if} \  x \not \in E; \\
  \mu^o (\cdot \,|\, x), & \text{if} \   x \in E.
  \end{cases}
$$ 
Then $\mu$ is a universally measurable stochastic kernel that satisfies $\mu(A(x) \,|\, x) = 1$ for all $x \in \X$ and hence $\mu \in \Pi_s$, and moreover, (\ref{eq-prf-Ss-2}) holds. Applying the same argument given above for the second implication of (\ref{eq-prf-Ss-2}), we have that $\zeta_s(p_0, \ugam)$ coincides with the restriction of $\Pr^{\mu}_{p_0}$ to $\B(\Omega)$, so $\zeta_s(p_0, \ugam) \in \S_s$. This proves that $\zeta_s \big(\Delta \cap (\P(\X) \times G)\big) \subset \S_s$. 
Hence $\S_s = \zeta_s \big(\Delta \cap (\P(\X) \times G)\big)$. 

Finally, notice that if $z = (p_0, \gamma_0, \gamma_1, \ldots) \in \Delta_s$, then by (\ref{cond-Delta2}), (\ref{def-setG}), and the definition (\ref{def-zetas}) for $\zeta_s$, the marginal distribution of $(x_n, a_n)$ w.r.t.\ $\zeta_s(z)$ is $\gamma_n$. 
The second assertion of the lemma then follows from the same argument used in the proof of the second assertion of Lemma~\ref{lem-strm2b} for $\S_m$.
\end{proof}

Since $\Delta$ is analytic \cite[Lem.~9.1]{bs}, by Lemmas~\ref{lem-strm3a} and~\ref{lem-strm3b}, $\S_s$ is the image of an analytic set under a Borel measurable mapping. So we obtain the desired conclusion by \cite[Prop.~7.40]{bs}:

\begin{prop} \label{prp-Ss}
The set $\S_s$ is analytic.
\end{prop}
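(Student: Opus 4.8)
The plan is to assemble Lemmas~\ref{lem-strm3a} and~\ref{lem-strm3b} together with the analyticity of $\Delta$; the substantive work having already been completed in those lemmas, this final step reduces to applying closure properties of analytic sets. First I would show that the domain $\Delta_s = \Delta \cap (\P(\X) \times G)$ is analytic. By Lemma~\ref{lem-strm3a}, $G$ is a Borel subset of $(\P(\X \times \A))^\infty$, so $\P(\X) \times G$ is a Borel---hence analytic---subset of the Borel space $\Z = \P(\X) \times (\P(\X \times \A))^\infty$. Since $\Delta$ is analytic by \cite[Lem.~9.1]{bs}, and intersections of analytic sets are analytic (property~(a) of Section~\ref{sec-proof-review}, i.e.\ \cite[Cor.~7.35.2]{bs}), it follows that $\Delta_s$ is analytic.

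Next I would apply Lemma~\ref{lem-strm3b}, which yields the identity $\S_s = \zeta_s(\Delta_s)$, together with the Borel measurability of $\zeta_s : \Z \to \P(\Omega)$ from Lemma~\ref{lem-strm3a}. Because the Borel image of an analytic set is analytic (property~(b), \cite[Prop.~7.40]{bs}), it follows at once that $\S_s$ is analytic, completing the argument.

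There is no genuine obstacle remaining at this step; the difficulty was front-loaded into Lemma~\ref{lem-strm3a}, where one must verify that the self-referential constraint~(\ref{def-setG})---which couples each $\gamma_n$ to the averaged measure $\tilde\gamma = \sum_{k} 2^{-k-1}\gamma_k$ through the disintegration kernel $\mrho_2$---carves out a Borel set $G$ and yields a Borel measurable mapping $\zeta_s$, and into the one-to-one/Borel-inverse claim of Lemma~\ref{lem-strm3b}. The only minor point I would double-check here is that $\P(\X) \times G$ is Borel in $\Z$ under the product $\sigma$-algebra, which holds since the first factor is all of $\P(\X)$ and $G$ is Borel in the second factor.
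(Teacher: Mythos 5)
Your proposal is correct and follows essentially the same route as the paper: both arguments combine the analyticity of $\Delta$ with Lemma~\ref{lem-strm3a} (that $G$ is Borel and $\zeta_s$ is Borel measurable) and the identity $\S_s = \zeta_s(\Delta_s)$ from Lemma~\ref{lem-strm3b}, then invoke the fact that Borel images of analytic sets are analytic \cite[Prop.~7.40]{bs}. Your added check that $\P(\X) \times G$ is Borel in $\Z$, making $\Delta_s$ analytic as the intersection of an analytic set with a Borel set, is exactly the implicit step in the paper's one-line proof, so nothing is missing.
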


\subsection{Proofs of Theorems~\ref{thm-ac-basic}-\ref{thm-ac-basic2}}

The main proof arguments for Theorems~\ref{thm-ac-basic}-\ref{thm-ac-basic2}
can be summarized as follows. First, express the average cost problems (under various criteria involved in the two theorems) equivalently as minimization problems on the set of probability measures induced by the policies in $\Pi$, $\Pi_m$, or $\Pi_s$. 
Show that these minimization problems correspond to partial minimization of lower semianalytic functions on analytic sets.
Then, apply a measurable selection theorem for such partial minimization problems (cf.\ (k) in Section~\ref{sec-proof-review}) to obtain measurable ($\epsilon$-)optimal solution mappings from $\X$ into certain sets of induced probability measures. From these mappings, construct policies that have the desired optimality properties.

We now give the proofs. Let $\S_\star \in \{\S, \S_m, \S_s\}$. 
Define sets $\tilde \S_\star \subset \P(\X) \times \P(\Omega)$, $\tilde \S^0_\star \in \X \times \P(\Omega)$ by
$$\tilde \S_\star : = \{ (p_0(p), p) \mid p \in \S_\star \}, \qquad \tilde \S^0_\star : = \{ (x, p) \mid \delta_x = p_0(p), x \in \X, p \in \S^0_\star \},$$
where $p_0(p)$ stands for the marginal distribution of $x_0$ w.r.t.\ $p$.

\begin{lem} \label{lem-ext-set}
The sets $\tilde \S_\star$ and $\tilde \S_\star^0$ are analytic.
\end{lem}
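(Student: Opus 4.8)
The plan is to prove that the two sets $\tilde\S_\star$ and $\tilde\S_\star^0$ are analytic by exhibiting each as a Borel preimage (or Borel image) of an already-established analytic set, relying on Theorem~\ref{thm-strat-m} together with the facts (a)--(b) about analytic sets recalled in Section~\ref{sec-proof-review}. The key observation is that the map $p \mapsto p_0(p)$, sending a measure on $\Omega$ to the marginal distribution of its initial coordinate $x_0$, is Borel measurable from $\P(\Omega)$ into $\P(\X)$; this is exactly the map $\psi$ used in the proof of Lemma~\ref{lem-strat-m0}, whose Borel measurability follows from \cite[Prop.~7.26 and Cor.~7.29.1]{bs}. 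Since $\S_\star$ is analytic by Theorem~\ref{thm-strat-m}, the graph construction below will yield the result.

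\smallskip
\noindent First I would handle $\tilde\S_\star$. Consider the Borel measurable map $\Psi : \P(\Omega) \to \P(\X) \times \P(\Omega)$ given by $\Psi(p) := (p_0(p), p)$; its first component is the Borel map $p \mapsto p_0(p)$ just discussed and its second component is the identity, so $\Psi$ is Borel measurable into the product Borel space $\P(\X) \times \P(\Omega)$. By definition $\tilde\S_\star = \Psi(\S_\star)$, the Borel image of the analytic set $\S_\star$, and so $\tilde\S_\star$ is analytic by fact (b) (Borel images of analytic sets are analytic, \cite[Prop.~7.40]{bs}).

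\smallskip
\noindent Next I would treat $\tilde\S_\star^0$. Recall from the proof of Lemma~\ref{lem-strat-m0} that $\P_0 := \{\delta_x \mid x \in \X\}$ is a Borel subset of $\P(\X)$ and that the map $x \mapsto \delta_x$ is a homeomorphism (hence a Borel isomorphism) of $\X$ onto $\P_0$; let $\eta : \P_0 \to \X$ denote its Borel measurable inverse. The set $\tilde\S_\star^0$ is the image of $\S_\star^0$ under the Borel map $p \mapsto (\eta(p_0(p)), p)$, which is well defined on the set $\{p \in \P(\Omega) \mid p_0(p) \in \P_0\}$ and in particular on $\S_\star^0$. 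Since $\S_\star^0$ is analytic (also by Theorem~\ref{thm-strat-m}), its image under this Borel map is analytic by fact (b), giving the claim. Alternatively, one may write $\tilde\S_\star^0 = \{(x,p) \mid (\delta_x, p) \in \tilde\S_\star\}$ as the preimage of the analytic set $\tilde\S_\star$ under the Borel map $(x,p) \mapsto (\delta_x, p)$ from $\X \times \P(\Omega)$ into $\P(\X) \times \P(\Omega)$, and again invoke fact (b).

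\smallskip
\noindent I do not anticipate a serious obstacle here, since the statement is essentially a repackaging of Theorem~\ref{thm-strat-m} through Borel-measurable coordinate maps; the only point requiring a little care is confirming that $p \mapsto p_0(p)$ and $x \mapsto \delta_x$ (and its inverse on the Borel set $\P_0$) are genuinely Borel measurable between the relevant Borel spaces, which is precisely what is assembled in the proof of Lemma~\ref{lem-strat-m0}. The mild subtlety is simply to make sure the product spaces $\P(\X) \times \P(\Omega)$ and $\X \times \P(\Omega)$ are treated as Borel spaces with their product Borel $\sigma$-algebras, so that ``Borel image/preimage preserves analyticity'' applies verbatim.
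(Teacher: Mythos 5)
Your proof is correct and follows essentially the same route as the paper: both realize $\tilde\S_\star$ as the image of the analytic set $\S_\star$ under the Borel map $p \mapsto (p_0(p),p)$ and $\tilde\S_\star^0$ as a Borel image of $\S_\star^0$ (the paper factors your composite map through the homeomorphism $(\delta_x,p)\mapsto(x,p)$), then invokes \cite[Prop.~7.40]{bs}. Your alternative preimage characterization of $\tilde\S_\star^0$ is a valid minor variant, not a different argument.
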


\begin{proof}
The set $\tilde \S_\star = \psi_1(\S_\star)$, where $\psi_1 : p \mapsto (p_0(p), p)$ is a Borel measurable mapping from $\P(\Omega)$ into $\P(\X) \times \P(\Omega)$. The set $\tilde \S_\star^0 = \psi_2 \big( \psi_1 (\S_\star^0) \big)$, where $\psi_2 : (\delta_x, p) \mapsto (x, p)$ is a homeomorphism from $\{ \delta _x \mid x \in \X\} \times \P(\Omega)$ onto $\X \times \P(\Omega)$.
Thus $\tilde \S_\star, \tilde \S^0_\star$ are Borel images of the analytic sets $\S_\star, \S^0_\star$ (Theorem~\ref{thm-strat-m}), respectively, so by \cite[Prop.~7.40]{bs}, they are analytic.
\end{proof}

The next proposition relates a universally measurable selection on $\tilde \S^0_\star$ to a universally measurable policy with certain structures. 
Its proof is long and will be given after we first use it to prove the two theorems. 
We remark that although not needed in this paper, there is an analogue of this proposition for $\tilde \S_\star$ by essentially the same proof arguments.

Denote by $\tilde \S^0_\star(x)$ the $x$-section of $\tilde \S^0_\star$; that is,  
$$\tilde \S^0_\star(x) : = \{ p \in \P(\Omega) \mid (x, p) \in \tilde \S^0_\star \} = \{ p \in \S^0_\star \mid p_0(p) = \delta_x \}.$$

\begin{prop} \label{prp-ummap-pol}
Suppose that $\zeta : \X \to \P(\Omega)$ is a universally measurable mapping such that  $\zeta(x) \in \tilde \S^0_\star(x)$ for all $x \in \X$. 
Then there exists a universally measurable policy $\pi \in \Pi$ such that: 
\begin{enumerate}[leftmargin=0.7cm,labelwidth=!]
\item[\rm (i)] for all $x \in \X$, $p_x = \zeta(x)$ where $p_x$ is the restriction of $\Pr^\pi_x$ to $\B(\Omega)$; 
\item[\rm (ii)] in the case $\tilde \S^0_\star = \tilde \S^0_m$ (resp.\ $\tilde \S^0_\star = \tilde \S^0_s$), $\pi$ is semi-Markov (resp.\ semi-stationary).
\end{enumerate}
\end{prop}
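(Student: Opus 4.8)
The plan is to reconstruct a policy from $\zeta$ by disintegrating each strategic measure $\zeta(x)$ into its successive one-stage action kernels and then gluing these across the initial state. The structural fact I would exploit is that, since $\zeta(x)\in\tilde\S^0_\star(x)$ forces $x_0=x$ almost surely under $\zeta(x)$, the measures $\{\zeta(x)\}_{x\in\X}$ are carried by the pairwise disjoint events $\{x_0=x\}$. Consequently a single kernel indexed by the coordinate $x_0$ --- which is part of every history $h_n$ --- can carry, simultaneously for all $x$, the conditional law of $a_n$ given $h_n$ under $\zeta(x)$. Since $\zeta(x)\in\S$ also satisfies the transition constraint (\ref{cond-strm1b}), the conditional law of $x_{n+1}$ given $h'_n$ is already $q(dx_{n+1}\mid x_n,a_n)$, so only the action kernels must be extracted; I would fix a reference policy $\mu^o\in\Pi_s$ to restore the control constraint on null sets.

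For the general case $\S_\star=\S$ I would work stagewise. Let $\alpha_n(x)\in\P(H'_n)$ be the marginal of $\zeta(x)$ on $H'_n$; then $x\mapsto\alpha_n(x)$ is universally measurable by \cite[Prop.~7.26, Cor.~7.29.1]{bs}. Applying \cite[Cor.~7.27.1]{bs} to the identity kernel $\alpha\mapsto\alpha$ on $H_n\times\A$ given $\P(H_n\times\A)$ --- exactly as $\mrho_2$ was produced --- I obtain a Borel measurable stochastic kernel $k_n(da_n\mid h_n;\alpha)$ disintegrating $\alpha$ over its $H_n$-marginal. Reading $x_0$ off $h_n$, I set $\mu_n(da_n\mid h_n):=k_n(da_n\mid h_n;\alpha_n(x_0))$ wherever this assigns $A(x_n)$ probability one, and $\mu^o(da_n\mid x_n)$ elsewhere. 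The exceptional set is universally measurable by \cite[Prop.~7.46]{bs}, and because $\zeta(x)$ satisfies (\ref{cond-strm1a}) while $k_n(\cdot\mid\cdot;\alpha_n(x))$ disintegrates $\alpha_n(x)$, it is null for the $H_n$-marginal of $\zeta(x)$ on $\{x_0=x\}$, so the repair does not perturb the reproduction. An induction on $n$ matching the $H'_n$-marginals of $\Pr^\pi_x$ and $\zeta(x)$ --- the shared $q$ handling the state step and $k_n$ the action step --- shows these agree for all $n$; since finite-dimensional distributions determine a Borel measure on $\Omega$, $\Pr^\pi_x$ (which exists by \cite[Prop.~7.45]{bs}) and $\zeta(x)$ coincide on $\B(\Omega)$, giving (i).

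For $\S_\star=\S_m$ (resp.\ $\S_s$) I would instead reuse the reconstruction maps $\zeta_m$ (resp.\ $\zeta_s$). Writing $\gamma_n(x)$ for the marginal of $\zeta(x)$ on $(x_n,a_n)$ and $\tilde\gamma(x):=\sum_{k\geq 0}2^{-k-1}\gamma_k(x)$ (both universally measurable in $x$), Lemma~\ref{lem-strm2b} (resp.\ Lemma~\ref{lem-strm3b}) identifies $\zeta(x)$ with $\zeta_m$ (resp.\ $\zeta_s$) applied to the sequence $(\delta_x,\gamma_0(x),\gamma_1(x),\dots)$. Reading the defining formula (\ref{def-zetam}) (resp.\ (\ref{def-zetas})) off this sequence, and using that the $x_0$-marginal of $\gamma_0(x)$ is $\delta_x$, the action kernel appearing at stage $n$ is $\mrho_2(da_n\mid x_n;\gamma_n(x))$ (resp.\ $\mrho_2(da_n\mid x_n;\tilde\gamma(x))$). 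I would therefore define $\mu_n(da_n\mid h_n):=\mrho_2(da_n\mid x_n;\gamma_n(x_0))$ (resp.\ $\mrho_2(da_n\mid x_n;\tilde\gamma(x_0))$), repaired by $\mu^o$ as before. These kernels depend on $h_n$ only through $(x_0,x_n)$, so $\pi$ is semi-Markov (resp.\ semi-stationary, the single map $\tilde\mu(da\mid x_0,x_n):=\mrho_2(da\mid x_n;\tilde\gamma(x_0))$ serving every stage and $\mu_0$ being its diagonal). Reproduction $\Pr^\pi_x=\zeta(x)$ on $\B(\Omega)$ is immediate here, as the iterated integral defining $\Pr^\pi_x$ matches (\ref{def-zetam}) (resp.\ (\ref{def-zetas})) term by term; this yields (ii) together with (i).

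The main difficulty is not a hard estimate but uniform measurability together with everywhere (rather than almost-everywhere) reproduction: the action kernels must be jointly measurable in the full history and must recover $\zeta(x)$ for every single $x$, all while respecting the control constraint at every $h_n$. The disjoint-support observation is precisely what lets a Borel measurable parameterized disintegration, composed with the universally measurable marginal maps $x\mapsto\alpha_n(x)$, $\gamma_n(x)$, $\tilde\gamma(x)$, accomplish this uniformly in $x$. The remaining delicate bookkeeping is to confirm that each constraint-repair set is null for the relevant one-step marginal, so that overwriting the kernels there with $\mu^o$ leaves the reproduced measure unchanged.
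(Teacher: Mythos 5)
Your proposal is correct and takes essentially the same route as the paper's proof: disintegrate $\zeta(x)$ into stagewise action kernels parameterized by the initial state read off as $x_0$ from the history (exploiting the disjoint supports $\{x_0=x\}$), repair control-constraint violations with a fixed $\mu^o$ on universally measurable null sets, and verify $\Pr^\pi_x = \zeta(x)$ on measurable rectangles by induction on $n$, handling the semi-Markov and semi-stationary cases through $\zeta_m$, $\zeta_s$, Lemmas~\ref{lem-strm2b} and~\ref{lem-strm3b}, and the kernel $\mrho_2$ exactly as the paper does. The only differences are cosmetic---in the general case you build a universal Borel disintegration $k_n(da_n \mid h_n; \alpha)$ via \cite[Cor.~7.27.1]{bs} and compose it with the universally measurable marginal map $x \mapsto \alpha_n(x)$, where the paper instead applies \cite[Prop.~7.27]{bs} directly to the universally measurable kernel $\zeta(x)(d\omega)$---and your claim that reproduction is ``immediate'' in the $m$/$s$ cases is slightly optimistic, since the paper still runs the induction with the null-set identities (\ref{eq-ummap-prf4}) and (\ref{eq-ummap-prf6}) (pushed through the $q$-step via (\ref{cond-Delta2})), which is precisely the bookkeeping your closing paragraph correctly identifies as remaining.
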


We now rewrite the average cost problems involved in Theorems~\ref{thm-ac-basic}-\ref{thm-ac-basic2} as partial minimization problems on $\tilde \S^0_\star$.
Consider any average cost criterion $J^{(i)}$, $1 \leq i \leq 4$. We have
\begin{equation} \label{eq-partial-min}
   g^*(x) = g^m(x) = \inf_{p \in \tilde \S_m^0(x)} f(p), \qquad  g^s(x) =  \inf_{p \in \tilde \S_s^0(x)} f(p), \quad   x \in \X,
\end{equation}   
where we dropped the subscript $i$ for ``$g$'' to simplify notation, and the function $f: \S^0 \to [- \infty, + \infty]$ is defined according to the criterion $J^{(i)}$ under consideration as follows:
\begin{align}  
\text{for $J^{(1)}$}: & \qquad f(p) : = \limsup_{n \to \infty} n^{-1} \int_\Omega \textstyle{ \sum_{k=0}^{n-1} c(x_k, a_k)} \, p(d \omega),  \label{eq-def-f1a} \\
\text{for $J^{(3)}$}: & \qquad    f(p) : = \limsup_{n \to \infty} \sup_{j \geq 0} n^{-1} \int_\Omega \textstyle{\sum_{k=0}^{n-1} c(x_{k+j}, a_{k+j})} \, p(d \omega), \qquad \qquad  \label{eq-def-f1b}
\end{align}    
and the cases of $J^{(2)}$ and $J^{(4)}$ are similar. The equality $g^*(x) = g^m(x)$ in (\ref{eq-partial-min})
comes from the well-known fact that for any initial distribution $p_0$ and history-dependent policy $\pi \in \Pi$, there exists a Markov policy $\pi_m \in \Pi_m$ under which the marginal distributions of $(x_n, a_n)$, $n \geq 0$, coincide with those under $\pi$ (cf.\ the proof of \cite[Prop.~1]{ShrB79}) and hence, the average costs of $\pi$ and $\pi_m$ are equal at $p_0$, w.r.t.\ any $J^{(i)}$. 

Likewise, for any criterion $\tJ^{(i)}, 1 \leq i \leq 4$, letting $\tg$ stand for $\tg_i$, we have
\begin{equation} \label{eq-partial-min2}
 \tg^*(x) = \inf_{p \in \tilde \S^0(x)} f(p),   \qquad \tg^\diamond(x) = \inf_{p \in \tilde \S_\diamond^0(x)} f(p), \qquad x \in \X,  \ \ \diamond \in \{m, s\},
\end{equation} 
where 
the function $f: \S^0 \to [- \infty, + \infty]$ is defined according to the criterion $\tJ^{(i)}$:
\begin{align}  
\text{for $\tJ^{(1)}$}:  & \qquad  f(p) : =  \int_\Omega \left( \limsup_{n \to \infty} n^{-1} \textstyle{ \sum_{k=0}^{n-1} c(x_k, a_k)} \right) \, p(d \omega),  \label{eq-def-f2a} \\
\text{for $\tJ^{(3)}$}: & \qquad  f(p) : =  \int_\Omega \left( \limsup_{n \to \infty}  \sup_{j \geq 0} n^{-1} \textstyle{ \sum_{k=0}^{n-1} c(x_{k+j}, a_{k+j})} \right) \, p(d \omega),   \qquad \qquad \label{eq-def-f2b} 
\end{align}   
and the other two cases are similar.

For the classes of MDPs considered in Theorems~\ref{thm-ac-basic}-\ref{thm-ac-basic2} (\mdp, \mdn, \ptmdp, \ptmdn), 
the functions $f$ given above are all well defined and do not involve $+\infty - \infty$, since $f(p)$ is just another way to express $J(\pi,x)$ or $\tJ(\pi, x)$. In general, we can adopt the convention $+\infty - \infty = +\infty$ to have 
the following lemma hold without restrictions on the MDP model.

\begin{lem} \label{lem-f-lsa}
For $J^{(i)}, \tJ^{(i)}, 1 \leq i \leq 4$, the corresponding functions $f$ are lower semianalytic.
\end{lem}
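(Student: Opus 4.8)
The plan is to build each $f$ out of the one-stage cost $c$ by repeatedly invoking the closure properties of lower semianalytic functions collected in Section~\ref{sec-proof-review}, so that the whole argument reduces to bookkeeping. The single observation on which everything rests is that for each fixed $k \geq 0$ the map $\omega \mapsto c(x_k, a_k)$ is lower semianalytic on $\Omega$. To obtain this, I would first extend $c$ to a function $\bar c$ on all of $\X \times \A$ by setting $\bar c \equiv +\infty$ on $(\X \times \A)\setminus \Gamma$; for every $r \in \R$ the level set $\{\bar c \leq r\} = \{(x,a)\in\Gamma \mid c(x,a)\leq r\}$ is analytic, so $\bar c$ is lower semianalytic. (Extending by $+\infty$ rather than $-\infty$ is essential, since $\Gamma^c$ need not be analytic.) The coordinate projection $\omega \mapsto (x_k,a_k)$ is continuous and hence Borel, so the composition property (e) of Section~\ref{sec-proof-review} makes $\omega \mapsto \bar c(x_k,a_k)$ lower semianalytic on $\Omega$; moreover, since $p\{(x_k,a_k)\in\Gamma \text{ for all } k\}=1$ for every $p \in \S^0$, replacing $c$ by $\bar c$ leaves the value of each $f$ on $\S^0$ unchanged.

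From this point the two families of criteria differ only in the order in which integration and the outer lim-operations are applied. For the expected-cost criteria $J^{(i)}$, I would form, for each $n \geq 1$ and $j \geq 0$, the finite sum $C_{n,j}(\omega) := \sum_{k=0}^{n-1} \bar c(x_{k+j},a_{k+j})$, which is lower semianalytic by the additivity property (d); then $p \mapsto \int_\Omega C_{n,j}\,dp$ is lower semianalytic by the integration property (g), as is its scaling by the positive constant $n^{-1}$. The functions $f$ for $J^{(1)}$--$J^{(4)}$ are then obtained from the countable family $\{\,n^{-1}\!\int C_{n,j}\,dp\,\}_{n,j}$ by taking $\limsup_n$, $\liminf_n$, $\limsup_n \sup_j$, and $\liminf_n \inf_j$ respectively, each of which preserves lower semianalyticness by property (c). For the sample-path criteria $\tJ^{(i)}$ I would instead perform the lim-operations \emph{inside} the integral: the integrand $\phi(\omega)$ --- namely $\limsup_n n^{-1}C_{n,0}$, $\liminf_n n^{-1}C_{n,0}$, $\limsup_n \sup_j n^{-1}C_{n,j}$, or $\liminf_n \inf_j n^{-1}C_{n,j}$ --- is lower semianalytic on $\Omega$ again by (d) and (c), after which $f(p)=\int_\Omega \phi\,dp$ is lower semianalytic by (g). Finally, since $\S^0$ is analytic by Theorem~\ref{thm-strat-m}, restricting any of these functions (lower semianalytic on all of $\P(\Omega)$) to the domain $\S^0$ intersects each level set with $\S^0$ and hence keeps it analytic by property (a), so the restriction is still lower semianalytic.

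The routine part of this is the accounting; the only point requiring genuine care, and the one I expect to be the main obstacle, is the consistent handling of the $+\infty-\infty$ convention. One must invoke it uniformly so that every finite sum $C_{n,j}$, every integral $\int C_{n,j}\,dp$ and $\int\phi\,dp$, and every $\limsup$/$\liminf$ is unambiguously defined; this is exactly the convention adopted immediately before the statement of the lemma. Under it the closure properties (c), (d), and (g) apply verbatim to extended-real-valued functions, so the composition, summation, integration, and limit steps chain together without any difficulty beyond this careful accounting.
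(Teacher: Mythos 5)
Your proof is correct and takes essentially the same route as the paper's: extend $c$ by $+\infty$ off $\Gamma$, form the finite partial sums $\sum_{k=0}^{n-1} c(x_{k+j},a_{k+j})$ via Borel composition and addition, and then apply the integration property and the countable $\limsup/\liminf/\sup/\inf$ closure properties in the order dictated by each criterion, over the analytic domain $\S^0$. Your additional checks---that the extension must be by $+\infty$ (since $\Gamma^c$ need not be analytic) and that replacing $c$ by the extension leaves $f$ unchanged on $\S^0$ because $p\{(x_k,a_k)\in\Gamma\}=1$ for all $k$---are sound elaborations of steps the paper leaves implicit.
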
 

\begin{proof}
This lemma follows from Theorem~\ref{thm-strat-m} and the preservation of lower semianalyticness under various operations (cf.\ Section~\ref{sec-proof-review}).
We give the proof details for $f$ defined in (\ref{eq-def-f1b}) and (\ref{eq-def-f2b}); the other cases are similar. 
First, for technical convenience, we extend the lower semianalytic one-stage cost function $c(\cdot)$ from $\Gamma$ to $\X \times \A$ by defining $c(x,a) : = +\infty$ on $\Gamma^c$. This extension is also lower semianalytic. 
Now for each $n \geq 1, j \geq 0$, the function $\phi_{n,j}(\omega) : = \sum_{k=0}^{n-1} c(x_{k+j}, a_{k+j})$ is lower semianalytic on $\Omega$ by \cite[Lem.\ 7.30(4)]{bs}, so the function $\psi_{n,j}(p) : = \int \phi_{n,j} \, dp$ is lower semianalytic on $\P(\Omega)$ by \cite[Cor.\ 7.48.1]{bs}. 
For $f$ given by (\ref{eq-def-f1b}), we have $f(p) = \limsup_{n \to \infty} \sup_{j \geq 0} n^{-1} \psi_{n,j}(p)$ and the domain of $f$, $\S^0$, is an analytic subset of $\P(\Omega)$ (Theorem~\ref{thm-strat-m}). It then follows from \cite[Lem.\ 7.30(2)]{bs} that $f$ is lower semianalytic.
In the case of (\ref{eq-def-f2b}), the domain of $f$ is the same analytic set $\S^0$, and we have $f(p) = \int \phi \, dp$, for the function $\phi(\omega) : = \limsup_{n \to \infty} \sup_{j \geq 0} n^{-1} \phi_{n,j}(\omega)$, which is lower semianalytic on $\Omega$ by \cite[Lem.\ 7.30(2)]{bs}. Then $\int \phi \, dp$ is a lower semianalytic function of $p$ by \cite[Cor.\ 7.48.1]{bs}, and it follows that $f$ is lower semianalytic. 
\end{proof}

We are now ready to prove Theorems~\ref{thm-ac-basic}-\ref{thm-ac-basic2}. 

\begin{proof}[Proof of Theorems~\ref{thm-ac-basic}-\ref{thm-ac-basic2}]
The arguments are the same for the two theorems. 
We have shown that the sets $\tilde \S^0, \tilde \S_m^0$, and $\tilde \S_s^0$ are analytic (Lemma~\ref{lem-ext-set}),
and that for each case of the average cost criterion, the corresponding function $f(p)$ on $\S^0$ is lower semianalytic (Lemma~\ref{lem-f-lsa}), which obviously implies that viewed as a function of $(x,p)$, 
$\hat f (x,p) : = f(p)$ is lower semianalytic on $\X \times \S^0  \supset \tilde \S^0$.
Thus every minimization problem appearing in (\ref{eq-partial-min}) and (\ref{eq-partial-min2}) is of the form of partial minimization of a lower semianalytic function on an analytic set.  By \cite[Prop.\ 7.47]{bs} the resulting function from such partial minimization is lower semianalytic (cf.\ (k) in Section~\ref{sec-proof-review}). 
So the functions $g^\star, \tg^\star$, $\star \in \{\ast, m, s\}$ are all lower semianalytic, as asserted in parts (i) of the two theorems.

Furthermore, by a measurable selection theorem \cite[Prop.\ 7.50]{bs}, for any $\epsilon > 0$, there exist measurable $\epsilon$-optimal solution mappings for these partial minimization problems (cf.\ (k) in Section~\ref{sec-proof-review}). More precisely, consider, for instance, the two partial minimization problems in (\ref{eq-partial-min}). 
By \cite[Prop.\ 7.50]{bs}, for every $\epsilon > 0$, there is a universally measurable mapping $\zeta: \X \to \P(\Omega)$ such that
for all $x \in \X$,
\begin{equation} \label{eq-thm-basic-prf1}
\zeta(x) \in \tilde \S_\diamond^0(x) \quad \text{and} \quad f\big(\zeta(x)\big) \leq \begin{cases} 
    g^\diamond(x) + \epsilon & \text{if} \ g^\diamond(x) > - \infty, \\
    - \epsilon^{-1} & \text{if} \ g^\diamond(x) = - \infty,
    \end{cases}
\end{equation}
and 
\begin{equation} \label{eq-thm-basic-prf2}
     f(\zeta(x)) = g^\diamond(x)  \quad \text{on} \ \big\{ x \in \X \mid \exists \, p \in \tilde \S_\diamond^0(x) \ s.t.\ f(p) = g^\diamond(x) \big\},
\end{equation}
where $\diamond \in \{m, s\}$. 
Let $\pi$ be the policy given by Prop.~\ref{prp-ummap-pol} for the above mapping $\zeta$. Then $\pi$ is semi-Markov (resp.\ semi-stationary) in the case $\diamond = m$ (resp.\ $\diamond = s$) by Prop.~\ref{prp-ummap-pol}(ii), and by Prop.~\ref{prp-ummap-pol}(i), 
$f(\zeta(x)) = J^{(i)}(\pi, x)$ for all $x \in \X$. This together with (\ref{eq-thm-basic-prf1}) and (\ref{eq-thm-basic-prf2}) proves that $\pi$ fulfills the requirements in Theorem~\ref{thm-ac-basic}. (In the case of the second statement in Theorem~\ref{thm-ac-basic}(ii), we also use the fact that w.r.t.\ the criteria $J^{(i)}$, if there exists an optimal policy for a state $x$, then there exists a Markov optimal policy for that state).

Theorem~\ref{thm-ac-basic2}(ii)-(iii) follows from applying the same argument given above to the three partial minimization problems in (\ref{eq-partial-min2}). Specifically, for Theorem~\ref{thm-ac-basic2}(ii), we apply the above argument with $\tilde S^0$ in place of $\tilde \S^0_\diamond$ and with $\tg^*$ in place of $g^\diamond$; and for Theorem~\ref{thm-ac-basic2}(iii), we apply the above argument with $\tg^\diamond$ in place of $g^\diamond$.
\end{proof}

In the rest of this subsection, we prove Prop.~\ref{prp-ummap-pol}, which we used in the above proof.

\begin{proof}[Proof of Prop.~\ref{prp-ummap-pol}]
(i) Consider first the case $\tilde \S^0_\star = \tilde \S^0$. Let us construct a randomized history-dependent policy $\pi \in \Pi$ that satisfies Prop.~\ref{prp-ummap-pol}(i). The argument is similar to the one used to prove Lemma~\ref{lem-strm1}, except that here the probability measures induced by $\pi$, when restricted to $\B(\Omega)$, must agree with $\zeta(x)$ for all initial distributions $\delta_x, x \in \X$, instead of a single initial distribution.

Since $\zeta$ is universally measurable, $\zeta(x)(d\omega)$ is a universally measurable stochastic kernel on $\Omega$ given $\X$ \cite[Def.\ 7.12]{bs}. By a repeated application of \cite[Prop.\ 7.27]{bs} to decompose the marginals of $\zeta(x)(d\omega)$ on $H_n$ and $H'_n$, $n \geq 0$, and by also taking into account that $\zeta(x) \in \tilde \S^0(x) \subset \S^0$ so that (\ref{cond-strm1b}) holds for all $\zeta(x), x \in \X$ (cf.\ Lemma~\ref{lem-strm1}),
we can represent $\zeta(x)(d \omega)$ as the composition of the marginal distribution of $x_0$ with a sequence of stochastic kernels:
$$ p_0(dx_0 \mid x) = \delta_x(dx_0), \ \mu_0(da_0 \mid x_0; \, x), \ q(dx_1 \mid x_0, a_0), \ \ldots, \mu_n(da_n \mid h_n; \, x), \ q(dx_{n+1} \mid x_n, a_n), \ \ldots.$$
In the above, except for those related to state transitions, all the stochastic kernels have parametric dependences on $x$. 
Moreover, $p_0(dx_0 \mid x) = \delta_x(dx_0)$ is Borel measurable in $x$, and
for every $B \in \B(\A)$, $\mu_n(B \,|\, h_n; \, x)$ is $\big(\B(H_n) \otimes \U(\X)\big)$-measurable in $(h_n, x)$ by \cite[Prop.\ 7.27]{bs}, which implies that $\mu_n$ is a universally measurable stochastic kernel on $\A$ given $H_n \times \X$ \cite[Lem.~7.28]{bs}.

We now modify $\mu_n$ to satisfy the control constraint of the MDP. 
For $n \geq 0$ and $x \in \X$, let 
$$D_n : = \{ (h_n, x) \in H_n \times \X \mid  \mu_n( A(x_n) \mid h_n ; \, x) < 1 \}, \qquad D_{n,x} : = \{ h_n \mid (h_n, x) \in D_n \};$$
since $\mu_n$ is universally measurable and the sets $A(y)$, $y \in \X$, are analytic, $D_n$ and $D_{n,x}$ are universally measurable \cite[Prop.\ 7.46]{bs}.
Since $\zeta(x) \in \S$, by (\ref{cond-strm1a}), $\zeta(x)\{(x_n, a_n) \in \Gamma\} = 1$ for all $n \geq 0$, so we must have
\begin{equation} \label{eq-ummap-prf1}
\zeta(x)\{ h_n \in D_{n,x}\} = 0,  \qquad \forall \, n \geq 0, \ x \in \X. 
\end{equation}
For $n=0$, since $\zeta(x) \in \tilde \S^0(x)$, (\ref{eq-ummap-prf1}) implies that $\mu_0(A(x) \,|\, x; \, x) = 1$, i.e., $(x,x) \not\in D_0$, for all $x \in \X$.

For some fixed $\mu^o \in \Pi_s$, define
\begin{equation} \label{eq-ummap-prf2}
   \tilde \mu_n(d a_n \mid h_n) : = \begin{cases} 
          \mu_n(d a_n \mid h_n; \, x_0), & \text{if} \ (h_n, x_0) \not\in D_n; \\
           \mu^o(d a_n \mid x_n), & \text{if} \ (h_n, x_0)  \in D_n.
           \end{cases}        
\end{equation}
In this definition, since $\mu_n$ is a universally measurable stochastic kernel and the mapping $\psi: h_n \mapsto (h_n, x_0)$ is Borel measurable, for every $B \in \B(\A)$, $\mu_n(B \nmid h_n; \, x_0)$ is universally measurable in $h_n$ \cite[Prop.~7.44]{bs}; and the two sets, $\{ h_n \in H_n \mid (h_n, x_0) \not\in D_n\}$ and $\{ h_n \in H_n \mid (h_n, x_0) \in D_n\}$, are preimages of the universally measurable sets $D_n^c$ and $D_n$ under the Borel measurable mapping $\psi$ and are therefore universally measurable \cite[Cor.\ 7.44.1]{bs}. 
Consequently, $\tilde \mu_n (da_n \,|\, h_n)$ is a universally measurable stochastic kernel on $\A$ given $H_n$ that satisfies $\tilde \mu_n(A(x_n) \mid h_n) = 1$ for all $h_n \in H_n$.
Then $\pi : = (\tilde \mu_0, \tilde \mu_1, \ldots )$ is a valid (universally measurable) policy in $\Pi$.

To show that $\pi$ satisfies Prop.~\ref{prp-ummap-pol}(i), it suffices to show that for each $x \in \X$, we have
\begin{equation} \label{eq-ummap-prf3}
    \Pr^\pi_x \big\{ h'_n \in B^{(n)} \big\} = \zeta(x)\{ h'_n \in B^{(n)} \}
\end{equation}    
for every set $B^{(n)}$ of the form $B^{(n)} = B_0 \times \cdots \times B_{n}$, $B_i \in \B(\X \times \A)$, $0 \leq i \leq n$, $n \geq 0$.
We prove this by induction on $n$. For $n = 0$, since $(x,x) \not\in D_0$ as discussed earlier, we have
$$     \Pr^\pi_x \big\{ h'_0 \in B_0 \big\}  = \int_\A \ind_{B_0} (x, a_0) \, \tilde \mu_0(da_0 \mid x)  = \int_\A \ind_{B_0} (x, a_0) \, \mu_0(da_0 \mid x; \, x) = \zeta(x) \{ h_0' \in B_0 \}.$$
Now suppose that (\ref{eq-ummap-prf3}) holds for some $n \geq 0$. 
Then the marginal distribution of $\Pr^\pi_x$ on $H'_n$, denoted $p^{(n)}_x$, coincides, on $\B(H'_n)$, with the marginal distribution of $\zeta(x)$ on $H'_n$, denoted $\zeta^{(n)}_x$. 
Write $B^{(n+1)}$ as $B^{(n+1)} = B^{(n)} \times B_{n+1}$, and let $m: = n+ 1$. We have 
\begin{align}
     \Pr^\pi_x \big\{ h'_{m} \in B^{(m)} \big\} & = \int_{B^{(n)}} \int_\X \int_\A \ind_{B_{m}}(x_{m}, a_{m}) \, \tilde \mu_{m}(da_{m} \mid h_{m}) \, q(dx_{m} \mid x_n, a_n) \, p^{(n)}_x( dh'_n) \notag \\
     & = \int_{B^{(n)}} \int_\X \int_\A \ind_{B_m}(x_m, a_m) \, \tilde \mu_m(da_m \mid h_m) \, q(dx_m \mid x_n, a_n) \, \zeta^{(n)}_x( dh'_n) \notag \\ 
   & = \int_{B^{(n)}} \int_\X \left\{ \int_\A \ind_{B_m}(x_m, a_m)  \,  \mu_m(da_m \mid h_m; \, x_0) \right\} \cdot \ind( h_m \not\in D_{m, x_0} ) \notag \\ 
   & \qquad \qquad \quad \cdot q(dx_m \mid x_n, a_n) \, \zeta^{(n)}_x( dh'_n) \notag  \\
    & \ \  \ + \int_{B^{(n)}} \int_\X \left\{ \int_\A \ind_{B_m}(x_m, a_m) \,  \mu^o(da_m \mid x_m) \right\} \cdot \ind( h_m \in D_{m, x_0} )  \notag \\
    & \qquad \qquad \qquad \cdot  q(dx_m \mid x_n, a_n) \, \zeta^{(n)}_x( dh'_n). \notag 
\end{align}
Since $\zeta(x)\{x_0 = x\} = 1$ and $\zeta(x)\{ h_m \in D_{m,x}\} = 0$ by (\ref{eq-ummap-prf1}), we have
$$ \int_{H'_n} \int_\X  \ind( h_m \in D_{m, x_0} )  \, q(dx_m \mid x_n, a_n) \, \zeta^{(n)}_x( dh'_n) = 0,$$
which together with the preceding derivation leads to
\begin{align*}
 \Pr^\pi_x \big\{ h'_m \in B^{(m)} \big\}  & = \int_{B^{(n)}} \int_\X  \int_\A \ind_{B_m}(x_m, a_m)  \,  \mu_m(da_m \mid h_m; \, x) \, q(dx_m \mid x_n, a_n) \, \zeta^{(n)}_x( dh'_n)  \\
 & = \zeta(x)\{ h'_m \in B^{(m)} \}. 
\end{align*} 
This completes the induction and proves that (\ref{eq-ummap-prf3}) holds all $n \geq 0$.  Thus $\pi$ satisfies Prop.~\ref{prp-ummap-pol}(i).

\medskip
\noindent (ii) Consider now the case $\tilde \S^0_\star =  \tilde \S^0_s$. 
For each $x \in \X$, let $\gamma_n(x)$ be the marginal distribution of $(x_n, a_n)$ w.r.t.\ $\zeta(x)$, and 
let  $z(x) : = (\delta_x, \gamma_0(x), \gamma_1(x), \ldots)$. 
Let $\zeta_{s,\Delta_s}$ denote the restriction of $\zeta_s$ to $\Delta_s$, where $\zeta_s$ is defined through (\ref{def-zetas}) and $\Delta_s$ is as defined in Lemma~\ref{lem-strm3b}. 
Since $\zeta(x) \in \S_s^0 \subset \S_s$ by assumption,  
by Lemma~\ref{lem-strm3b} and its proof, $\zeta(x)$ and $z(x)$ are related as follows: $z(x) \in \Delta_s$,
$\zeta(x) = \zeta_s(z(x))$ and $z(x) = \zeta_{s,\Delta_s}^{-1} \big(\zeta(x) \big)$, where $\zeta_{s,\Delta_s}^{-1} : \S_s \to \Delta_s$ is Borel measurable. Then, since $\zeta$ is universally measurable, by \cite[Prop.~7.44]{bs}, $z(x)$ is universally measurable in $x$.

Let $\tilde \gamma(x) : =  \sum_{n=0}^\infty 2^{-n-1} \gamma_n(x)$. Consider a stochastic kernel $\mu$ on $\A$ given $\X^2$ defined by
$$\mu(da \mid y,  x)  : = \mrho_2\big(da \mid y; \, \tilde \gamma(x)\big),$$
where $\mrho_2$ is defined in (\ref{eq-dec-meas}) as we recall. 
Since $\mrho_2$ is a Borel measurable stochastic kernel and the mapping $x \mapsto \tilde \gamma(x)$ is universally measurable (since it is the composition of the universally measurable mapping $x \mapsto z(x)$ with a Borel measurable mapping; cf.\ the start of the proof of Lemma~\ref{lem-strm3a}), 
the stochastic kernel $\mu$ is universally measurable.
Then, in view of the fact that the sets $A(y), y \in \X$, are analytic, the sets
$$D : = \{(y, x) \in \X^2 \mid \mu(A(y) \mid y,  x) < 1 \}, \qquad D_x : = \{ y \in \X \mid (y,x) \in D\}, \quad x \in \X,$$ 
are universally measurable by \cite[Prop.\ 7.46]{bs}.
Since $z(x) \in \Delta_s$, from (\ref{cond-Delta1}) and (\ref{def-setG}), 
we have
\begin{equation} \label{eq-ummap-prf4}
 \gamma_n(x)(D_x \times \A) = 0, \qquad \forall \, n \geq 0, \  x \in \X.
\end{equation} 
For $n=0$, (\ref{eq-ummap-prf4}) is the same as $\delta_x(D_x) = 0$, i.e., $\mu(A(x) \,|\, x, x) = 1$ so that $(x,x) \not\in D$, for all $x \in \X$.

We now modify $\mu$ to define a semi-stationary policy. For some fixed $\mu^o \in \Pi_s$, let
\begin{equation} \label{eq-ummap-prf5}
   \tilde \mu(d a \mid y, x_0) : = \begin{cases} 
          \mu(d a \mid y,  x_0), & \text{if} \ (y, x_0) \not\in D; \\
           \mu^o(d a \mid y), & \text{if} \ (y, x_0)  \in D.
           \end{cases}        
\end{equation}
Then $\tilde \mu(A(y) \,|\, y, x_0) = 1$ for all $(y, x_0) \in \X^2$, and since the stochastic kernels $\mu, \mu^o$ and the sets $D^c, D$ are all universally measurable, $\tilde \mu$ is a universally measurable stochastic kernel. 
Consequently, $\pi : = (\tilde \mu(da_0 \,|\, x_0, x_0), \, \ldots, \, \tilde \mu(da_n \,|\, x_n, x_0), \, \ldots)$ is a semi-stationary (universally measurable) policy.

Finally, we prove that $\pi$ satisfies Prop.~\ref{prp-ummap-pol}(i). As in part (i) of this proof, it suffices to show that (\ref{eq-ummap-prf3}) holds all $n \geq 0$ and all measurable rectangles $B^{(n)}$, and we will prove this by induction on $n$. 
For $n= 0$, since $(x,x) \not\in D$ and $\zeta(x)\{x_0 = x\} = \gamma_0(x)(\{x\} \times \A) = 1$, we have
$$     \Pr^\pi_x \big\{ h'_0 \in B_0 \big\}  = \int_\A \ind_{B_0} (x, a_0) \, \tilde \mu(da_0 \mid x, x)  = \int_\A \ind_{B_0} (x, a_0) \, \mrho_2\big(da_0 \mid x;  \, \tilde \gamma(x) \big) = \zeta(x) \{ h_0' \in B_0 \},$$
where the last equality follows from the relation $\zeta(x) = \zeta_s(z(x))$ and the definition (\ref{def-zetas}) for $\zeta_s$.
Now suppose that (\ref{eq-ummap-prf3}) holds for some $n \geq 0$. Then, as in part (i) of this proof, we have that $p^{(n)}_x$ coincides with $\zeta^{(n)}_x$ 
on $\B(H'_n)$, where $p^{(n)}_x$ and $\zeta^{(n)}_x$ are the marginals of $\Pr^\pi_x$ and $\zeta(x)$ on $H'_n$, respectively.
Writing $B^{(n+1)}$ as $B^{(n+1)} = B^{(n)} \times B_{n+1}$, we have that for $m: = n+1$,
\begin{align}
     \Pr^\pi_x \big\{ h'_m \in B^{(m)} \big\} & = \int_{B^{(n)}} \int_\X \int_\A \ind_{B_m}(x_m, a_m) \, \tilde \mu(da_m \mid x_m, x_0) \, q(dx_m \mid x_n, a_n) \, p^{(n)}_x( dh'_n) \notag \\
     & = \int_{B^{(n)}} \int_\X \int_\A \ind_{B_m}(x_m, a_m) \, \tilde \mu(da_m \mid x_m, x_0) \, q(dx_m \mid x_n, a_n) \, \zeta^{(n)}_x( dh'_n) \notag \\ 
        & = \int_{B^{(n)}} \int_\X \left\{ \int_\A \ind_{B_m}(x_m, a_m)  \,  \mrho_2\big(da_m \mid x_m; \, \tilde \gamma(x_0)\big) \right\} \cdot \ind( x_m \not\in D_{x_0} ) \notag \\ 
   & \qquad \qquad \quad \cdot q(dx_m \mid x_n, a_n) \, \zeta^{(n)}_x( dh'_n) \notag  \\
    & \ \  \ + \int_{B^{(n)}} \int_\X \left\{ \int_\A \ind_{B_m}(x_m, a_m) \,  \mu^o(da_m \mid x_m) \right\} \cdot \ind( x_m \in D_{x_0} )  \notag \\
    & \qquad \qquad \qquad \cdot  q(dx_m \mid x_n, a_n) \, \zeta^{(n)}_x( dh'_n). \notag 
\end{align}
Since $\zeta(x)\{x_0 = x\}=1$ and $\gamma_m(x)(D_{x} \times \A) = 0$  by (\ref{eq-ummap-prf4}), we have
$$ \int_{H'_n} \int_\X  \ind( x_m \in D_{x_0} )  \, q(dx_m \mid x_n, a_n) \, \zeta^{(n)}_x( dh'_n) = \int_{\X \times \A}  \ind( x_m \in D_{x} ) \,\gamma_m(x) (d(x_m, a_m))  = 0$$
(where we also used the relation (\ref{cond-Delta2}) between $\gamma_{n+1}(x)$ and $\gamma_n(x)$ to obtain the first equality). 
Combing this with the preceding derivation, we have
\begin{align*}
 \Pr^\pi_x \big\{ h'_m \in B^{(m)} \big\}  & = \int_{B^{(n)}} \int_\X  \int_\A \ind_{B_m}(x_m, a_m)  \,   \mrho_2\big(da_m \mid x_m; \, \tilde \gamma(x_0)\big)  \, q(dx_m \mid x_n, a_n) \, \zeta^{(n)}_x( dh'_n)  \\
 & = \zeta(x)\{ h'_m \in B^{(m)} \},
\end{align*} 
where the last equality follows from the relation $\zeta(x) = \zeta_s(z(x))$ and the definition (\ref{def-zetas}) of $\zeta_s$.
The induction is now complete; thus (\ref{eq-ummap-prf3}) holds all $n \geq 0$. This proves that the semi-stationary policy $\pi$ satisfies Prop.~\ref{prp-ummap-pol}(i).

\medskip
\noindent (iii) The case $\tilde \S^0_\star =  \tilde \S^0_m$ is similar to the previous case $\tilde \S^0_s$, so we will only outline the proof.  
With $z(x): = (\delta_x, \gamma_0(x), \gamma_1(x), \ldots)$ defined as in part (ii) of this proof, we use the assumption on $\zeta$ and Lemma~\ref{lem-strm2b} to obtain that
$z(x) \in \Delta$, $\zeta(x) = \zeta_m(z(x))$, and $z(x) =  \zeta_{m, \Delta}^{-1}(\zeta(x))$ is universally measurable in $x$. 
(Here $\zeta_m$ is defined in (\ref{def-zetam}) as we recall, and $\zeta_{m, \Delta}$ is the restriction of $\zeta_m$ to $\Delta$.)

For $n \geq 0$, let $\mu_n (da_n \,|\, x_n; \, x) : = \mrho_2\big(da_n \,|\, x_n; \, \gamma_n(x)\big)$, and let 
$$D_n : = \{(x_n, x) \in \X^2 \mid \mu_n(A(x_n) \mid x_n; \,  x) < 1 \}, \qquad D_{n,x} : = \{ y \in \X \mid (y,x) \in D_n\}, \quad x \in \X.$$ 
By the same reasoning given before (\ref{eq-ummap-prf4}), for all $n \geq 0$, $\mu_n$ is a universally measurable stochastic kernel and the sets $D_n, D_{n,x}$ are universally measurable. Since $z(x) \in \Delta$, by (\ref{cond-Delta1}), 
\begin{equation} \label{eq-ummap-prf6}
 \gamma_n(x)(D_{n,x} \times \A) = 0, \qquad \forall \, n \geq 0, \ x \in \X.
\end{equation} 
For some fixed $\mu^o \in \Pi_s$, let 
\begin{equation} 
   \tilde \mu_n(d a_n \mid x_n; \, x_0) : = \begin{cases} 
          \mu_n(d a_n \mid x_n; \,  x_0), & \text{if} \ (x_n, x_0) \not\in D; \\
           \mu^o(d a_n \mid x_n), & \text{if} \ (x_n, x_0)  \in D.
           \end{cases}        \notag
\end{equation}
Then $\pi : = (\tilde \mu_0, \tilde \mu_1, \ldots)$ is a universally measurable, semi-Markov policy.

To show that $\pi$ satisfies Prop.~\ref{prp-ummap-pol}(i), as in the previous cases, we prove (\ref{eq-ummap-prf3}) by induction on $n$. 
In particular, we use (\ref{eq-ummap-prf6}), the relation $\zeta(x) = \zeta_m(z(x))$, and the definition (\ref{def-zetam}) of $\zeta_m$ to complete the induction procedure.  
\end{proof}

\section{Proofs for Section~\ref{sec-ae-const}} \label{sec-5}

In this section we prove the results given in Section~\ref{sec-ae-const}.                        

\subsection{Proofs of Lemma~\ref{lem-ineq} and Theorems~\ref{thm-ac-const}-\ref{thm-ac-const2}} \label{sec-5.1}

Recall that in these theorems, we deal with an MDP in the \mdn class under the average cost criterion $J^{(1)}$ or $J^{(3)}$, and $g^*$ denotes the optimal average cost function $g^*_1$ or $g^*_3$, depending on the criterion under consideration.
We start with two inequalities obtained by applying Fatou's lemma:

\begin{lem} \label{lem-fatou}
Under Assumpion~\ref{cond-ac-n1},
for any $x \in \X$, $a \in A(x)$, and $\pi \in \Pi$,
\begin{align}
\limsup_{n \to \infty} \int_\X n^{-1} J_n(\pi, y) \, q(dy \mid x, a) 
& \leq  \int_\X  \limsup_{n \to \infty} n^{-1} J_n(\pi, y) \, q(dy \mid x, a),  \label{eq-fatou1} \\
\limsup_{n \to \infty} \int_\X  \sup_{j \geq 0} \, n^{-1} J_{n,j}(\pi, y) \, q(dy \mid x, a) 
& \leq  \int_\X  \limsup_{n \to \infty} \sup_{j \geq 0} \, n^{-1} J_{n,j}(\pi, y) \, q(dy \mid x, a). \label{eq-fatou2}
\end{align}   
\end{lem}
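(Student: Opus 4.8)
The plan is to obtain both inequalities from the \emph{reverse Fatou lemma}: if a sequence of universally measurable functions $f_n$ on $\X$ is dominated from above by a single $q(\cdot\mid x,a)$-integrable function, then $\limsup_{n}\int f_n\,dq \le \int \limsup_{n} f_n\,dq$. Since the desired (\ref{eq-fatou1})--(\ref{eq-fatou2}) have $\limsup\int$ on the left and $\int\limsup$ on the right, this is exactly the form needed, and the whole content of the lemma reduces to exhibiting the integrable majorant. Assumption~\ref{cond-ac-n1} is tailored precisely for this.

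First I would establish the pointwise bound. Since $c \le c^+$ and, in the \mdn class, the expected finite-stage costs with respect to $c^+$ are finite, for every $y \in \X$, $\pi\in\Pi$, and $n\ge 1$,
$$ J_n(\pi,y) \le \E^\pi_y\Big[\textstyle\sum_{k=0}^{n-1} c^+(x_k,a_k)\Big] = \textstyle\sum_{k=0}^{n-1}\E^\pi_y[c^+(x_k,a_k)] \le n\,M_\pi(y), $$
because $\E^\pi_y[c^+(x_k,a_k)] \le M_\pi(y) = \sup_{m\ge 0}\E^\pi_y[c^+(x_m,a_m)]$ for every $k$. Dividing by $n$ gives $n^{-1}J_n(\pi,y) \le M_\pi(y)$. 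The identical computation applied to $J_{n,j}$ yields $n^{-1}J_{n,j}(\pi,y) \le M_\pi(y)$ for each $j\ge 0$, and hence $\sup_{j\ge 0} n^{-1}J_{n,j}(\pi,y) \le M_\pi(y)$. Thus the two sequences appearing in (\ref{eq-fatou1}) and (\ref{eq-fatou2}) are both majorized by the single nonnegative function $M_\pi$, which by the second part of Assumption~\ref{cond-ac-n1} satisfies $\int_\X M_\pi(y)\,q(dy\mid x,a) < \infty$ for all $(x,a)\in\Gamma$.

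With the majorant in hand I would run the reverse Fatou argument. Writing $f_n$ for either $n^{-1}J_n(\pi,\cdot)$ or $\sup_{j\ge 0}n^{-1}J_{n,j}(\pi,\cdot)$, the functions $M_\pi - f_n \ge 0$ are universally measurable, so the standard Fatou lemma gives
$$ \int_\X \liminf_{n\to\infty}\big(M_\pi(y)-f_n(y)\big)\,q(dy\mid x,a) \le \liminf_{n\to\infty}\int_\X \big(M_\pi(y)-f_n(y)\big)\,q(dy\mid x,a). $$
Because $\int_\X M_\pi\,dq$ is finite and does not depend on $n$, it may be split off from both sides, turning the left-hand integrand into $M_\pi - \limsup_n f_n$ and the right-hand expression into $\int M_\pi\,dq - \limsup_n\int f_n\,dq$; cancelling the finite quantity $\int M_\pi\,dq$ yields precisely $\limsup_n\int f_n\,dq \le \int\limsup_n f_n\,dq$, which is (\ref{eq-fatou1}) in the first case and (\ref{eq-fatou2}) in the second.

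The steps are routine, so the main obstacle is simply the bookkeeping surrounding the value $-\infty$ and the well-definedness of every integral. Under \mdn the quantity $\E^\pi_y[\sum_k c^-(x_{k},a_{k})]$ may be $+\infty$, so each $f_n(y)$ can equal $-\infty$; however, $f_n \le M_\pi$ with $M_\pi\ge 0$ forces $f_n^+ \le M_\pi$, so $\int f_n\,dq$ and $\int \limsup_n f_n\,dq$ are well defined in $[-\infty,+\infty)$ and no $+\infty-\infty$ ever arises, which legitimizes splitting off and cancelling the finite term $\int M_\pi\,dq$. The remaining point to record is the universal measurability of $f_n$---that of $y\mapsto J_n(\pi,y)$ and of the countable supremum over $j$---which follows from the measurable dependence of the strategic measures on the initial state together with the preservation of (lower semi)analyticity under integration of the lower semianalytic integrand $\sum_k c(x_k,a_k)$.
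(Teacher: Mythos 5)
Your proof is correct and takes essentially the same route as the paper: the paper's own (much terser) proof likewise observes that $n^{-1}J_n(\pi,\cdot)$ and $\sup_{j\ge 0} n^{-1}J_{n,j}(\pi,\cdot)$ are bounded above by $M_\pi$, which is $q(dy \mid x,a)$-integrable by Assumption~\ref{cond-ac-n1}, and then invokes Fatou's lemma (in its limsup form) to interchange limit and integral. You simply make explicit what the paper leaves implicit---the derivation of the reverse Fatou inequality via the nonnegative functions $M_\pi - f_n$, the bookkeeping that rules out $+\infty-\infty$ when $f_n$ takes the value $-\infty$, and the universal measurability of the integrands---all of which is accurate.
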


\begin{proof}
Under Assumption~\ref{cond-ac-n1}, for all $n \geq 1$, as functions of $y$, $n^{-1} J_{n}(\pi, y)$ and $\sup_{j \geq 0} n^{-1} J_{n,j}(\pi, y)$ are bounded above by the function $M_\pi(y)$, which is integrable w.r.t.\ $q(dy \,|\, x , a)$.
Thus we can apply Fatou's lemma to the l.h.s.\ of (\ref{eq-fatou1}) and (\ref{eq-fatou2}) 
to interchange the order of limit and integral. This yields (\ref{eq-fatou1}) and (\ref{eq-fatou2}).
\end{proof}

Lemma~\ref{lem-ineq} (i.e., the inequality (\ref{eq-ac-ineq}): $g^*(x) \leq \inf_{a \in A(x)} \int_\X g^*(y) \, q(dy \,|\, x, a)$ for all $x \in \X$) follows from the preceding lemma, Theorem~\ref{thm-ac-basic}, and some calculations:

\begin{proof}[Proof of Lemma~\ref{lem-ineq}]
Let $\epsilon > 0$. For the average cost criterion $J^{(i)}$, $i=1$ or $3$, let $\pi^\epsilon$  be a semi-Markov $\epsilon$-optimal policy 
for the corresponding average cost problem; such a policy exists by Theorem~\ref{thm-ac-basic}(ii). 
For each $x \in \X$ and $a \in A(x)$, consider a policy $\pi \in \Pi$ that applies action $a$ at $x$ at the first stage and applies $\pi^\epsilon$ thereafter; i.e., if $\pi^\epsilon = \big(\mu^\epsilon_0(da_0 \,|\, x_0),  \, \mu^\epsilon_1(da_1 \,|\, x_1, x_0), \,  \mu^\epsilon_2(da_2 \,|\, x_2, x_0),  \, \ldots\big)$, then 
$\pi$ can be expressed as 
$$\pi = \big(\, \mu_0(da_0 \mid x_0),  \, \mu^\epsilon_0 (da_1 \mid x_1), \, \mu^\epsilon_1(da_2 \mid x_2, x_1),  \,\mu^\epsilon_1(da_3 \mid x_3, x_1),  \, \ldots \, \big).$$ 
with $\mu_0(da_0 \,|\, x) = \delta_a$. (Such $\mu_0$ exists: for $x_0 \not = x$, let $\mu_0(\cdot \,|\, x_0) = \mu(\cdot \,|\, x_0)$ for a fixed stationary policy $\mu \in \Pi_s$; then $\mu_0$ is a universally measurable stochastic kernel satisfying the control constraint.)

Now consider the case $i = 1$, where the average cost $J^{(1)}(\pi, x) = \limsup_{n \to \infty} n^{-1} J_n(\pi, x)$ by definition. 
By our choice of $\pi$,
$n^{-1} J_n(\pi,x) = n^{-1} c(x,a) + \int_\X n^{-1} J_{n-1}(\pi^\epsilon, y) \, q(dy \nmid x, a)$. Letting $n \to \infty$ and applying Lemma~\ref{lem-fatou} to $ \pi^\epsilon$, we obtain that (regardless of whether $c(x,a) = - \infty$ or not)
\begin{equation} \label{eq-prp-ineq-prf2}
   J^{(1)}(\pi, x) \leq \int_\X  \limsup_{n \to \infty}  n^{-1} J_{n}(\pi^\epsilon, y) \, q(dy \mid x, a) =  \int_\X J^{(1)}(\pi^\epsilon, y) \, q(dy \mid x, a).
\end{equation}
Let $E : = \{ y \in \X \mid g^*_1(y) = - \infty \}$. If $q(E \,|\, x,a) = 0$, then, since $\pi^\epsilon$ is $\epsilon$-optimal, we have $J^{(1)}(\pi^\epsilon, y) \leq g^*_1(y) + \epsilon$ for $q(dy \,|\, x, a)$-almost all $y$, so (\ref{eq-prp-ineq-prf2}) implies 
$g^*_1(x) \leq \int_\X g^*_1(y) \, q(dy \,|\, x,a) + \epsilon$. 
If $q(E \,|\, x,a) \not = 0$, let $t : = \int_\X M_{\pi^\epsilon}(y) \, q(dy \,|\,x, a) < \infty$ (cf.\ Assumption~\ref{cond-ac-n1}). Then, using Assumption~\ref{cond-ac-n1} and the $\epsilon$-optimality of $\pi^\epsilon$, we can bound the r.h.s.\ of (\ref{eq-prp-ineq-prf2}) from above by
$ - \epsilon^{-1} q(E \,|\, x, a) + t$, which, by letting $\epsilon \to 0$, implies $g^*_1(x) = - \infty$. Thus, in either case, $g^*_1(x) \leq \int_\X g^*_1(y) \, q(dy \,|\, x,a) + \epsilon$. Since $\epsilon$ and $a$ are arbitrary, the desired inequality (\ref{eq-ac-ineq}) follows. 

Consider now the case $i=3$, where the average cost $J^{(3)}(\pi, x) = \lim_{n \to \infty} \sup_{j \geq 0} n^{-1} J_{n,j}(\pi, x)$. 
We have 
\begin{equation} \label{eq-prp-ineq-prf3}
  J^{(3)}(\pi, x) \leq \max \left\{ \limsup_{n \to \infty} n^{-1} J_{n,0}(\pi, x), \, \limsup_{n \to \infty} \sup_{j \geq 1} n^{-1} J_{n,j}(\pi, x) \right\}.
\end{equation}   
Since $J_{n,0}(\pi,x) = J_n(\pi,x)$, the same proof argument leading to (\ref{eq-prp-ineq-prf2}) shows that
\begin{equation} \label{eq-prp-ineq-prf4}
  \limsup_{n \to \infty} n^{-1} J_{n,0}(\pi, x) \leq \int_\X  \limsup_{n \to \infty}  n^{-1} J_{n}(\pi^\epsilon, y) \, q(dy \mid x, a).
\end{equation}
For $j \geq 1$, $J_{n,j}(\pi,x) = \int_\X J_{n,j-1}(\pi^\epsilon, y) \, q(dy \,|\, x,a)$, and
\begin{equation}
   \sup_{j \geq 1} \int_\X n^{-1} J_{n,j-1}(\pi^\epsilon, y) \, q(dy \,|\, x,a)  \leq  \int_\X \sup_{j \geq 0} \, n^{-1} J_{n,j}(\pi^\epsilon, y) \, q(dy \,|\, x,a). \notag
\end{equation}   
Letting $n \to \infty$ in this inequality and applying Lemma~\ref{lem-fatou} to $\pi^\epsilon$, we have
\begin{equation}
  \limsup_{n \to \infty} \sup_{j \geq 1} \, n^{-1} J_{n,j}(\pi, x)   \leq
 \int_\X \limsup_{n \to \infty} \sup_{j \geq 0} \, n^{-1} J_{n,j}(\pi^\epsilon, y) \, q(dy \,|\, x,a). \label{eq-prp-ineq-prf5}
\end{equation} 
Combining (\ref{eq-prp-ineq-prf3}), (\ref{eq-prp-ineq-prf4}), and (\ref{eq-prp-ineq-prf5}), we obtain
$$  J^{(3)}(\pi, x) = \lim_{n \to \infty} \sup_{j \geq 0} \, n^{-1} J_{n,j}(\pi, x) \leq \int_\X J^{(3)}(\pi^\epsilon, y) \, q(dy \mid x, a).$$ 
To establish the desired inequality (\ref{eq-ac-ineq}) for $g^*_3$, we can now apply exactly the same proof given immediately after (\ref{eq-prp-ineq-prf2}) for the case $i=1$, with $g^*_3$ in place of $g^*_1$.
\end{proof}

We now proceed to prove Theorem~\ref{thm-ac-const}.
First, let us consider the process $\{(x_n, a_n)\}$ induced by a policy $\pi \in \Pi$ and an initial state $x_0=x \in \X$.
Let $\F_n$ denote the $\sigma$-algebra generated by the state and action variables up to time $n$; i.e., $\F_n$ is generated by the random variable $h'_n(\omega) : = \{ (x_k(\omega), a_k(\omega))\}_{k \leq n}$, which is a measurable mapping from $\Omega$ to $(\X \times \A)^{n+1}$, with both spaces equipped with the universal $\sigma$-algebras.

\begin{lem} \label{lem-subm}
Under Assumptions~\ref{cond-ac-n1} and~\ref{cond-ac-n2}, for any $\pi \in \Pi$ and $x_0 = x \in \X$ such that $g^*(x) \not = - \infty$, $\big\{g^*(x_n), \F_n\big\}_{n \geq 0}$ is a submartingale satisfying $\sup_{n \geq 0} \E^\pi_x \big[ \big| g^*(x_n) \big| \big] <  + \infty$ and, therefore, converges almost surely to an integrable random variable.
\end{lem}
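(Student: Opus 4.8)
The plan is to verify the three defining properties of a submartingale---adaptedness, integrability, and the conditional-expectation inequality---and then invoke the $L^1$-bounded submartingale convergence theorem. Adaptedness is immediate: $g^*$ is lower semianalytic by Theorem~\ref{thm-ac-basic}(i), hence universally measurable, so each $g^*(x_n)$ is $\F_n$-measurable.

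The key observation driving the estimates is the one-sided bound $g^* \leq M$ on all of $\X$. To see this I would fix $y \in \X$ and take the policy $\pi$ supplied by Assumption~\ref{cond-ac-n2}(a), for which $\sup_{k\geq 0} \E^\pi_y[c^+(x_k,a_k)] \leq M(y)$. Since $c \leq c^+$, each term $\E^\pi_y[c(x_k,a_k)]$ is bounded above by $M(y)$, so $n^{-1} J_n(\pi,y) \leq M(y)$ and $\sup_{j\geq 0} n^{-1} J_{n,j}(\pi,y) \leq M(y)$ for every $n$; passing to the limit gives $J^{(i)}(\pi,y) \leq M(y)$ for $i\in\{1,3\}$, whence $g^*(y) \leq M(y)$. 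Consequently $g^*(x_n)^+ \leq M(x_n)$, and Assumption~\ref{cond-ac-n2}(b) yields the uniform positive-part bound $\sup_n \E^\pi_x[g^*(x_n)^+] \leq \sup_n \E^\pi_x[M(x_n)] =: K < \infty$; in particular $g^*(x_n)^+$ is integrable for every $n$.

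For the submartingale inequality I would use that, conditionally on $\F_n$, the law of $x_{n+1}$ is $q(\cdot \mid x_n, a_n)$. Because $g^*(x_{n+1})^+$ is integrable, the conditional expectation is well defined and equals $\int_\X g^*(y)\,q(dy\mid x_n,a_n)$ a.s.\ (the integral avoids $+\infty-\infty$ a.s., since $\int_\X M\,dq(\cdot\mid x_n,a_n)$ has finite mean $\E^\pi_x[M(x_{n+1})] \leq K$). As $\pi$ respects the control constraint, $(x_n,a_n)\in\Gamma$ a.s., so Lemma~\ref{lem-ineq} (inequality~(\ref{eq-ac-ineq})) gives
\[
\E^\pi_x[\,g^*(x_{n+1}) \mid \F_n\,] \;=\; \int_\X g^*(y)\,q(dy\mid x_n,a_n) \;\geq\; g^*(x_n) \qquad \Pr^\pi_x\text{-a.s.}
\]
Full integrability then follows by induction: the base case holds because $g^*(x)\in(-\infty,M(x)]$ is finite; and if $g^*(x_n)$ is integrable, taking expectations in the display (legitimate since the positive part is integrable, so the quantities are quasi-integrable) gives $\E^\pi_x[g^*(x_{n+1})] \geq \E^\pi_x[g^*(x_n)] > -\infty$, so $g^*(x_{n+1})^-$ is integrable as well. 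Telescoping the chain $\E^\pi_x[g^*(x_n)] \geq \cdots \geq g^*(x)$ back to $n=0$ bounds the negative parts uniformly, $\E^\pi_x[g^*(x_n)^-] \leq K - g^*(x)$, and combining with the positive-part bound yields $\sup_n \E^\pi_x[|g^*(x_n)|] \leq 2K - g^*(x) < \infty$. The claimed a.s.\ convergence to an integrable limit is then the standard convergence theorem for $L^1$-bounded submartingales.

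The main obstacle is the bookkeeping around the value $-\infty$: since $g^*$ may equal $-\infty$ on large portions of $\X$, every expectation and every kernel integral must be kept from degenerating into $+\infty-\infty$. The device that makes this work is to establish $g^*\leq M$ first, so that all positive parts are dominated by an integrable envelope, and then to anchor the integrability induction at the standing hypothesis $g^*(x)\neq -\infty$; once these two points are secured, the remaining steps are routine manipulations of conditional expectations of quasi-integrable random variables.
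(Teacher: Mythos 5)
Your proof is correct and takes essentially the same route as the paper's: you establish $[g^*]_+ \leq M$ from Assumption~\ref{cond-ac-n2}(a), get the uniform positive-part bound $\sup_n \E^\pi_x[M(x_n)] < \infty$ from Assumption~\ref{cond-ac-n2}(b), obtain the submartingale inequality by applying Lemma~\ref{lem-ineq} through the conditional law $q(\cdot \mid x_n, a_n)$, use the telescoped bound $\E^\pi_x[g^*(x_n)] \geq g^*(x) > -\infty$ to control the negative parts, and conclude by the convergence theorem for $L^1$-bounded submartingales. Your explicit derivation of $g^* \leq M$ and the induction establishing quasi-integrability at each stage are just careful elaborations of steps the paper states without detail, not a different argument.
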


\begin{proof}
Let $[ g^*(\cdot)]_+$ and $[ g^*(\cdot)]_-$ denote the positive and negative parts of $g^*(\cdot)$, respectively. 
Assumption~\ref{cond-ac-n2}(a) implies that $[ g^*(\cdot)]_+\leq M(\cdot)$; then by Assumption~\ref{cond-ac-n2}(b) we have 
\begin{equation} \label{eq-lem-subm-prf}
\textstyle{\sup_{n \geq 0} \E_x^\pi \big\{ [ g^*(x_n) ]_+ \big\} < + \infty.}
\end{equation} 
Lemma~\ref{lem-ineq} implies that under Assumption~\ref{cond-ac-n1}, for all $n \geq 0$,
\begin{equation} \label{eq-lem-subm-prf2}
\E^\pi_x \big[ g^*(x_{n+1}) \mid \F_n \big] \geq g^*(x_n) \qquad \text{and} \qquad \E^\pi_x \big[ g^*(x_{n}) \big] \geq g^*(x),
\end{equation}
where the expectations are well defined in view of (\ref{eq-lem-subm-prf}).
The first relation in (\ref{eq-lem-subm-prf2}) shows that $\big\{g^*(x_n), \F_n\big\}_{n \geq 0}$ is a submartingale. The second relation in (\ref{eq-lem-subm-prf2}), together with (\ref{eq-lem-subm-prf}) and the assumption $g^*(x) \not= - \infty$, implies $\sup_{n \geq 0} \E^\pi_x \big\{ [ g^*(x_n) ]_- \big\} <  + \infty$. 
Hence $\sup_{n \geq 0} \E^\pi_x \big[ \big| g^*(x_n) \big| \big] <  + \infty$. 
Then, by a submartingale convergence theorem \cite[Thm.\ IV-1-2]{Nev75}, $\{g^*(x_n)\}$ converges almost surely to an integrable random variable.
\end{proof}

Recall that for a set $B \subset \X$, the stopping time $\tau_B : = \min \{ n \geq 0 \mid x_n \in B \}$.

\begin{lem} \label{lem-reachability-cond} 
Let $\lambda$ and $\hat \X$ satisfy condition (i) of Theorem~\ref{thm-ac-const}. Then for each Borel set $B \subset \hat \X$ with $\lambda(B) > 0$, there exists a policy $\pi_B \in \Pi$ such that $\Pr_x^{\pi_B} (\tau_B < \infty) = 1$ for all $x \in \hat \X$.
\end{lem}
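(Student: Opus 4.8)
The plan is to produce the single policy $\pi_B$ from the per-state policies furnished by condition (i) by means of a measurable selection, thereby exchanging the quantifiers ``for each $x$ there is some $\pi$'' into ``there is some $\pi$ good for all $x$.'' The device is the partial-minimization machinery recalled in (k) of Section~\ref{sec-proof-review}, applied to a reachability objective expressed on the space of strategic measures.

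First I would encode the hitting event measure-theoretically. Since $B \in \B(\X)$ and each coordinate map $x_n : \Omega \to \X$ is Borel, the set $\{\tau_B < \infty\} = \bigcup_{n \geq 0} \{x_n \in B\}$ is a Borel subset of $\Omega$, and so is its complement $\{\tau_B = \infty\}$. Consequently the function $f(p) := p\{\tau_B = \infty\} = \int_\Omega \ind(\tau_B = \infty)\, p(d\omega)$ is Borel measurable, hence lower semianalytic, on $\P(\Omega)$ by property (g) of Section~\ref{sec-proof-review}. Viewing it as a function $\hat f(x,p) := f(p)$ that ignores the first argument, it is lower semianalytic on $\X \times \P(\Omega)$.

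Next I would set up the partial minimization. The set $\tilde\S^0$ is analytic by Lemma~\ref{lem-ext-set}, and $\mathrm{proj}_\X(\tilde\S^0) = \X$ since every state admits at least one policy. Hence
$$ v(x) := \inf_{p \in \tilde\S^0(x)} \hat f(x,p) = \inf_{\pi \in \Pi} \Pr^\pi_x(\tau_B = \infty) $$
is the minimization of a lower semianalytic function over an analytic set, so (k) applies. The crucial input from condition (i) is that the minimum is \emph{attained} with value zero on all of $\hat\X$: for $x \in \hat\X$ the policy supplied by condition (i) yields, via its restriction to $\B(\Omega)$, a strategic measure $p_x \in \tilde\S^0(x)$ with $\hat f(x,p_x) = \Pr^{\pi}_x(\tau_B = \infty) = 0$, and since $f \geq 0$ this forces $v(x) = 0$ and places $\hat\X$ inside the attainment set $E^*$ of (k). Invoking the exact selection (\ref{eq-um-minimizer-1}), I obtain a universally measurable $\zeta : \X \to \P(\Omega)$ with $\zeta(x) \in \tilde\S^0(x)$ for all $x$ and $\hat f(x,\zeta(x)) = v(x) = 0$ for every $x \in \hat\X$.

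Finally I would realize $\zeta$ as a genuine policy through Prop.~\ref{prp-ummap-pol}: there is $\pi_B \in \Pi$ whose induced measure $\Pr^{\pi_B}_x$, restricted to $\B(\Omega)$, equals $\zeta(x)$ for every $x$. Because $\{\tau_B < \infty\} \in \B(\Omega)$, this gives $\Pr^{\pi_B}_x(\tau_B < \infty) = \zeta(x)\{\tau_B < \infty\} = 1 - v(x) = 1$ for all $x \in \hat\X$, as required. I expect the only delicate point to be the verification that the infimum is genuinely attained (value exactly $0$) on $\hat\X$: it is this that lets me use the exact selection (\ref{eq-um-minimizer-1}) rather than an $\epsilon$-approximate one, and it is precisely here that the probability-\emph{one} (as opposed to approach-to-one) content of condition (i) is used. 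An approximate selection would only yield hitting probability $1-\epsilon$, and, because hitting times carry no uniform bound over the continuum of initial states in $\hat\X$, such an approximation could not be boosted to one by a truncation-and-retry argument.
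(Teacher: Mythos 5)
Your proof is correct, and while it rests on the same three pillars as the paper's---the analyticity of $\tilde\S^0$ (Lemma~\ref{lem-ext-set}, via Theorem~\ref{thm-strat-m}), the exact measurable selection (\ref{eq-um-minimizer-1}) on the attainment set, and the realization of the selector as a policy via Prop.~\ref{prp-ummap-pol}---it reaches them by a genuinely more direct route. The paper argues instead by reduction to its basic optimality theory: it constructs an auxiliary MDP in which every state of $B$ is made absorbing with one-stage cost $-1$ (and cost $0$ off $B$), so that the modified model is of class \ptmdn{} and $\tJ^{(1)}(\pi,x) = -\Pr^\pi_x(\tau_B < \infty)$; condition (i) then says the optimal value $-1$ is \emph{attained} at every $x \in \hat\X$, and the proof of Theorem~\ref{thm-ac-basic2}(ii) supplies a single universally measurable policy attaining $\tg^*_1(x) = -1$ simultaneously at all states where an optimal policy exists, which yields the desired $\pi_B$ after transferring back to the original model. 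Your version minimizes $f(p) = p\{\tau_B = \infty\}$ directly over the original strategic measures, which buys two small simplifications: you need not verify that the modified model fits the \ptmdn{} framework (Borel kernel, bounded lower semianalytic cost), and you avoid the transfer step---left implicit in the paper---that $\Pr^\pi_x(\tau_B < \infty)$ agrees under the original and modified kernels (true because the kernels coincide off $B$ and the event is determined by the pre-hitting trajectory). What the paper's detour buys is the reuse of an already-established theorem as a near-black-box, plus the conceptual point that probability-one reachability is itself an average-cost problem with an exactly attained optimum. Your closing remark is also on target and matches the paper's logic: both proofs hinge on exact attainment of the infimum (hitting probability exactly one, not merely approaching one) so that the exact branch (\ref{eq-um-minimizer-1}) of the selection theorem applies on $\hat\X$, and an $\epsilon$-approximate selection could not be boosted, for the reasons you give.
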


\begin{proof}
Consider an arbitrary Borel set $B \subset \hat \X$ with $\lambda(B) > 0$.
Define an MDP by modifying the state transition stochastic kernel and the one-stage costs of the original MDP as follows: 
$$
\tilde q(dy \nmid x, a) = \begin{cases} 
    \delta_x & \text{if}  \ x \in B, \, a \in \A; \\
    q(dy \mid x, a) & \text{if} \ x \not\in B, \, a \in \A;
    \end{cases} \qquad \ \ \ 
    \tilde c(x,a) = \begin{cases}
       - 1  & \text{if} \  x \in B, \, a \in A(x); \\
       0 & \text{if} \ x \not\in B, \, a \in A(x).
       \end{cases}
$$
Here the stochastic kernel $\tilde q (dy \nmid x,a)$ is Borel measurable, and the one-stage cost function $\tilde c(\cdot)$ is lower semianalytic and bounded on $\Gamma$. 
Our assumption on $\lambda$ and $\hat \X$ implies that in this modified MDP, w.r.t.\ the average cost criterion $\tJ^{(1)}$, there is an optimal policy for each $x \in \hat \X$, with the optimal average cost being $\tg^*_1(x) = -1$. 
Now by the proof of Theorem~\ref{thm-ac-const2}(ii), there is a universally measurable policy $\tilde \pi \in \Pi$
 that attains the optimal average cost $\tg^*_1(x)$ at every state $x$ for which there exists an optimal policy. 
Thus, in the modified MDP, $\tJ^{(1)}(\tilde \pi, x) = \tg^*_1(x) = -1$ on $\hat \X$. This implies that in the original MDP, $\Pr^{\tilde \pi}_x ( \tau_B < \infty) = 1$ for all $x \in \hat \X$, so we can let $\tilde \pi$ be the desired $\pi_B$.
\end{proof}

\begin{proof}[Proof of Theorem~\ref{thm-ac-const}]
For part (a), we use proof by contradiction. Since $g^*$ is lower semianalytic by Theorem~\ref{thm-ac-basic}(i), $g^*$ is universally measurable. 
Suppose that $g^*$ is not constant $\lambda$-a.e. 
Then there exist constants $\ell_1 < \ell_2$ such that the sets 
$$B'_1 : =\{x \in \X \mid g^*(x) < \ell_1\}, \qquad B'_2 : =\{x \in \X \mid g^*(x) > \ell_2 \}$$ 
satisfy $\lambda(B'_1) > 0, \lambda(B'_2) > 0$. Since $B'_1, B'_2 \in \U(\X)$, by Lemma~\ref{lem-umset} and the assumption $\lambda({\hat \X}^c) = 0$, there are Borel sets $B_1 \subset B'_1 \cap \hat \X$ and $B_2 \subset B'_2 \cap \hat \X$ such that $\lambda(B_1) = \lambda(B'_1) $ and $\lambda(B_2) = \lambda(B'_2)$. 

Let $\pi_{B_1}, \pi_{B_2} \in \Pi$ be two policies given by Lemma~\ref{lem-reachability-cond}, for the sets $B_1, B_2$, respectively. Consider a policy $\pi$ that executes $\pi_{B_1}$ until the system visits some state in $B_1$, then switches to executing $\pi_{B_2}$ until the system visits $B_2$, and then switches back to $\pi_{B_1}$, and so on. More precisely, let $\tau_0 = 0$ and define, recursively, stopping times $\tau_k$, $k \geq 1$, by 
$$\tau_{k} : = \min \big\{ n \geq \tau_{k-1} \mid x_n \in B_i \big\},$$ 
where $i = 1$ if $k$ is odd, and $i = 2$ if $k$ is even. 
If $\pi_{B_1} = (\mu^1_0, \mu^1_1, \ldots)$ and $\pi_{B_2} = (\mu^2_0, \mu^2_1, \ldots)$, then the policy $\pi = (\mu_0, \mu_1, \ldots)$ is given by: for each $n \geq 0$ and $(x_0, a_0, \ldots, a_{n-1}, x_n) \in (\X \times \A)^n \times \X$,
\begin{align*}
& \mu_n( d a_n \mid x_0, a_0, \ldots, a_{n-1}, x_n ) \\
 &  = \begin{cases}
                \mu^1_{n-j} (d a_n \mid x_j, a_j, \ldots, a_{n-1}, x_n )  \quad &   \text{if} \ \, j = \tau_{k} \leq n < \tau_{k+1} \ \text{for some even} \ k \geq 0;   \\
                 \mu^2_{n-j} (d a_n \mid x_j, a_j, \ldots, a_{n-1}, x_n )   & \text{if} \ \, j = \tau_{k} \leq n < \tau_{k+1} \ \text{for some odd} \ k \geq 1.   
                 \end{cases} 
\end{align*}                 
This expression also shows that $\pi$ is universally measurable and therefore a valid policy in $\Pi$.
 
Consider now the process $\{(x_n, a_n)\}$ induced by $\pi$ and an initial state $x_0 = x \in \hat \X$ with $g^*(x) \not= - \infty$ (such a state exists by condition (ii) of the theorem). By Lemma~\ref{lem-reachability-cond} and the construction of $\pi$, both $B_1$ and $B_2$ are visited infinitely often, almost surely. But by Lemma~\ref{lem-subm} $g^*(x_n)$ converges almost surely, which is impossible in view of the definitions of $B_1, B_2$.  This contradiction proves that $g^*$ must be constant $\lambda$-a.e.

Next, we show that $g^* \not= - \infty$ $\lambda$-a.e. If this were false, then, similarly to the preceding proof, we can find a Borel set $B \subset \{ x \in \hat \X \mid g^*(x) = - \infty \big\}$ with $\lambda(B) > 0$ and a corresponding policy $\pi_B$ given by Lemma~\ref{lem-reachability-cond} for the set $B$.
Consider the process $\{(x_n, a_n)\}$ induced by $\pi_B$ and an initial state $x_0 = x \in \hat \X$ with $g^*(x) > - \infty$. 
By Lemma~\ref{lem-subm}, $\{g^*(x_n), \F_n\}$ is a submartingale. By an optional stopping theorem for submartingales \cite[Thm.\ 10.4.1]{Dud02}, 
\begin{equation} \label{eq-thm-ac-prf1}
     \E^{\pi_B}_x \big[ g^*(x_{\tau_B \wedge N}) \big] \geq g^*(x), \qquad \forall \, N \geq 1.
\end{equation}
On the other hand, since $\Pi^{\pi_B}_x (\tau_B < \infty) = 1$, for sufficiently large $N$, $\Pr^{\pi_B}_x (\tau_B \leq N) > 0$ and therefore, 
$$  \E^{\pi_B}_x \big[ g^*(x_{\tau_B \wedge N}) \big]  = (- \infty) \cdot \Pr^{\pi_B}_x (\tau_B \leq N)  + \E^{\pi_B}_x \big[ g^*(x_N) \ind(\tau_B > N) \big] = - \infty.$$
(The above calculation is valid since $\E^{\pi_B}_x \big[ g^*(x_N) \ind(\tau_B > N) \big] \leq \E^{\pi_B}_x [ M(x_N) ] < + \infty$ by Assumption~\ref{cond-ac-n2}.)
This contradicts (\ref{eq-thm-ac-prf1}) since $g^*(x) > - \infty$. Thus we must have $g^* = \ell_\lambda$ $\lambda$-a.e., for some finite constant $\ell_\lambda$.

We now prove part (b) of the theorem. 
Let $\{(x_n, a_n)\}$ be induced by 
an initial state $x_0 = x \in \X$ and a policy $\pi$ that satisfy the assumption in part (b). 
Recall that $D = \{y \in \hat \X \mid g^*(y) = \ell_\lambda \}$.
Define a stopping time $\hat \tau : = \tau_D \wedge \min \{ n \geq 0 \mid g^*(x_n) > \ell_\lambda\} \leq \tau_D$. 

Consider first the case $g^*(x) > - \infty$. 
By Lemma~\ref{lem-subm}, $\{g^*(x_n), \F_n\}$ is a submartingale; by assumption, for some nonnegative function $f \geq g^*$, $\{f(x_n)\}$ are uniformly integrable, so the positive parts of this submartingale, $\{ [ g^*(x_n) ]_+\}$, are uniformly integrable.
Then, by an optional stopping theorem \cite[Cor.\ IV-4-25]{Nev75}, almost surely,
\begin{equation} \label{eq-thm-ac-prf2}
     g^*(x_{\hat \tau}) \leq  \E^{\pi}_x \big[ g^*(x_{\tau_D})  \mid \F_{\hat \tau} \big],
\end{equation}
where $\F_{\hat \tau}$ is the $\sigma$-algebra associated with the stopping time $\hat \tau$. 
Since by assumption $\tau_D < \infty$ a.s., the r.h.s.\ of (\ref{eq-thm-ac-prf2}) equals $\ell_\lambda$ a.s. 
In view of the definition of $\hat \tau$, this implies that $\hat \tau = \tau_D$ a.s., proving the assertion that for all $n \geq 0$,
$g^*(x_{n \wedge \tau_D}) \leq \ell_\lambda$ a.s. For $n=0$, this yields $g^*(x) = g^*(x_0) \leq \ell_\lambda$. 

The case $g^*(x) = - \infty$ is similarly proved: Let $s \vee t : = \max\{s, t\}$ for two extended real numbers $s$ and $t$. Consider the process $Z_n : = g^*(x_n) \vee b$, $n \geq 0$, for some finite negative number $b < \ell_\lambda$. Lemma~\ref{lem-ineq} and Assumption~\ref{cond-ac-n2} imply that $\{Z_n\}$ is a submartingle, and the uniform integrability assumption on $\{f(x_n)\}$ implies that the positive parts $\{ [ Z_n]_+\}$ of this submartingale are uniformly integrable. So, by \cite[Cor.\ IV-4-25]{Nev75}, 
$Z_{\hat \tau} \leq \E^{\pi}_x \big[ Z_{\tau_D}  \mid \F_{\hat \tau} \big]$ a.s.; that is, almost surely,
$$    g^*(x_{\hat \tau}) \vee b \leq  \E^{\pi}_x \big[ g^*(x_{\tau_D}) \vee b \mid \F_{\hat \tau} \big].$$
The same argument given immediately after (\ref{eq-thm-ac-prf2}) then shows that $g^*(x_{n \wedge \tau_D}) \leq \ell_\lambda$ a.s.\ for all $n \geq 0$. 

Finally, consider the last statement in part (b): $g^* \leq \ell_\lambda$ on $\hat \X$ in the special case where $g^*$ is bounded above. 
It follows from the general statement in part (b) that we just proved, by choosing the required policy $\pi$ for a state $x \in \hat \X$ to be the policy $\pi_B$ given by condition (i) of the theorem for a Borel set $B \subset D$ with $\lambda(B) = \lambda(D) > 0$. The existence of such a set $B$ follows from Lemma~\ref{lem-umset}. This completes the proof.
\end{proof}

For the \ptmdn model and the average cost criteria $\tJ^{(i)}, 1 \leq i \leq 4$, Theorem~\ref{thm-ac-const2} is proved by the same arguments given above. In fact, the proof is simpler because for the \ptmdn model, the average cost functions $\tg^\star$, where $\tg^\star \in \{\tg^*_i, \tg^m_i \mid 1 \leq i \leq 4\}$, are bounded from above. 
Thus, provided that $\tg^\star$ satisfies the inequality
\begin{equation} \label{eq-thm2-ac-prf1}
 \tg^\star (x) \leq  \inf_{a \in A(x)} \int_\X \tg^\star_i(y) \, q(dy \mid x, a), \qquad \forall \, x \in \X,
\end{equation}
we have that under any policy $\pi \in \Pi$ and for any $x_0 = x \in \X$ such that $\tg^\star(x) > - \infty$, $\tg^\star(x_n)$, $n \geq 0$, form a submartingale that is bounded above by some constant. The conclusion of Lemma~\ref{lem-subm} then holds for $\tg^\star$, and the same proof of Theorem~\ref{thm-ac-const} carries through with $\tg^\star$ in place of $g^*$. 

Regarding the inequality (\ref{eq-thm2-ac-prf1}), for $\tg^\star = \tg^*_i$, as mentioned in Section~\ref{sec-ae-const-a} (cf.\ (\ref{eq-ac2-eq})), equality actually holds. Working with the average costs of a policy along sample paths, one can prove the equality (\ref{eq-ac2-eq}) by direct calculations similar to those given in the proof of Lemma~\ref{lem-ineq}; we therefore omit the details.
For the case $\tg^\star = \tg^m_i$, let us verify that the inequality (\ref{eq-thm2-ac-prf1}) holds under the assumptions of Theorem~\ref{thm-ac-const2}.

\begin{lem} \label{lem-ineq2} 
Consider the \ptmdn model and any criterion $\tJ^{(i)}, 1 \leq i \leq 4$. If for every $\epsilon > 0$, $\tg^m_i$ can be attained within $\epsilon$ accuracy by a Markov policy, then (\ref{eq-thm2-ac-prf1}) holds for $\tg^\star = \tg^m_i$. 
\end{lem}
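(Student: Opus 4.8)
The plan is to adapt the proof of Lemma~\ref{lem-ineq} to the pathwise criteria, using two features specific to the present situation: a single deterministic first step prepended to a \emph{Markov} policy yields again a Markov policy, and the inner pathwise cost functional is invariant under the one-step shift on $\Omega$. Fix $x \in \X$, $a \in A(x)$, and $\epsilon > 0$. By the hypothesis of the lemma, choose a Markov policy $\pi^\epsilon = (\mu^\epsilon_0, \mu^\epsilon_1, \ldots) \in \Pi_m$ that attains $\tg^m_i$ within $\epsilon$ accuracy. Let $\pi := (\mu_0, \mu^\epsilon_0, \mu^\epsilon_1, \ldots)$, where $\mu_0(da_0 \mid x) = \delta_a$ and $\mu_0(\cdot \mid x_0)$ is an arbitrary fixed admissible kernel for $x_0 \ne x$. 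Because $\mu_0$ depends only on $x_0$ and each $\mu^\epsilon_n$ on the current state, $\pi$ is Markov; this is exactly where the Markov hypothesis is used, since it ensures $\pi \in \Pi_m$ and hence $\tg^m_i(x) \le \tJ^{(i)}(\pi, x)$.

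Next I would prove the one-step relation
\begin{equation} \label{eq-onestep-plan}
   \tJ^{(i)}(\pi, x) = \int_\X \tJ^{(i)}(\pi^\epsilon, y)\, q(dy \mid x, a).
\end{equation}
Write $\tJ^{(i)}(\pi, x) = \E^\pi_x[\Phi^{(i)}]$, where $\Phi^{(i)} : \Omega \to [-\infty, +\infty]$ is the inner pathwise functional of the criterion (e.g.\ $\Phi^{(1)}(\omega) = \limsup_n \tilde c_n$ and $\Phi^{(3)}(\omega) = \lim_n \sup_{j \ge 0} \tilde c_{n,j}$). The crux is that each $\Phi^{(i)}$ is invariant under the shift $S : (x_0, a_0, x_1, a_1, \ldots) \mapsto (x_1, a_1, x_2, a_2, \ldots)$: deleting the first summand $c(x_0, a_0)$ alters every average by a term tending to $0$, and, since $c$ is bounded above in the \ptmdn model, the re-indexing factors $\tfrac{n-1}{n}$ leave the limsup/liminf unchanged (for $i = 3, 4$ one also uses that $\sup_{j \ge 0}$ and $\inf_{j \ge 0}$ over $S\omega$ differ from those over $\omega$ only through the $j = 0$ term, which is controlled by the sample-path inequalities $\Phi^{(1)} \le \Phi^{(3)}$ and $\Phi^{(4)} \le \Phi^{(2)}$). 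Under $\pi$ the chain starts at $x$, takes $a$, and moves to $x_1 \sim q(\cdot \mid x, a)$; since $\pi^\epsilon$ is Markov, conditioned on $x_1 = y$ the shifted path $S\omega$ has law $\Pr^{\pi^\epsilon}_y$ (the time indices align because $\mu^\epsilon_n$ is applied at the $n$th step of the shifted chain). Hence $\E^\pi_x[\Phi^{(i)} \mid x_1 = y] = \E^\pi_x[\Phi^{(i)} \circ S \mid x_1 = y] = \E^{\pi^\epsilon}_y[\Phi^{(i)}] = \tJ^{(i)}(\pi^\epsilon, y)$, and integrating over the law $q(dy \mid x, a)$ of $x_1$ gives (\ref{eq-onestep-plan}). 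All expectations lie in $[-\infty, U]$ with $U := \sup_{(x,a) \in \Gamma} c(x,a) < \infty$, so no $+\infty - \infty$ occurs, and $y \mapsto \tJ^{(i)}(\pi^\epsilon, y)$ is universally measurable (being the composition of the universally measurable strategic-measure map with the lower semianalytic functional $p \mapsto \int \Phi^{(i)}\, dp$), so the integral is well defined.

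Combining $\tg^m_i(x) \le \tJ^{(i)}(\pi, x)$ with (\ref{eq-onestep-plan}) gives $\tg^m_i(x) \le \int_\X \tJ^{(i)}(\pi^\epsilon, y)\, q(dy \mid x, a)$, and I would finish exactly as at the end of the proof of Lemma~\ref{lem-ineq}. Put $E := \{ y \in \X \mid \tg^m_i(y) = -\infty\}$. If $q(E \mid x, a) = 0$ then $\tJ^{(i)}(\pi^\epsilon, y) \le \tg^m_i(y) + \epsilon$ for $q(dy\mid x,a)$-a.e.\ $y$, so $\tg^m_i(x) \le \int_\X \tg^m_i(y)\, q(dy \mid x, a) + \epsilon$; letting $\epsilon \downarrow 0$ removes $\epsilon$. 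If $q(E \mid x, a) > 0$, then from $\tJ^{(i)}(\pi^\epsilon, \cdot) \le U$ everywhere and $\le -\epsilon^{-1}$ on $E$ we get $\tg^m_i(x) \le -\epsilon^{-1} q(E \mid x, a) + U \to -\infty$, forcing $\tg^m_i(x) = -\infty$. In the first case $\tg^m_i(x) \le \int_\X \tg^m_i(y)\, q(dy \mid x, a)$, and in the second $\tg^m_i(x) = -\infty$; as $a \in A(x)$ was arbitrary, taking the infimum over $a$ yields (\ref{eq-thm2-ac-prf1}) for $\tg^\star = \tg^m_i$.

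The step demanding the most care is the one-step relation (\ref{eq-onestep-plan}): the shift-invariance of the $\Phi^{(i)}$ must be verified for all four criteria, and the Markov conditioning identity must be justified against the universally measurable construction of $\Pr^\pi_x$. This is precisely the point at which the Markov hypothesis is indispensable, and it explains why the alternative, direct reachability argument---which would splice policies at a random time and thereby destroy the Markov structure---is unavailable here, as noted in the remark following Theorem~\ref{thm-ac-const2}.
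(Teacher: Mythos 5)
Your proposal is correct and follows essentially the same route as the paper's proof: the same policy $\pi$ obtained by prepending $\mu_0$ with $\mu_0(da_0 \mid x) = \delta_a$ to the shifted Markov policy $\pi^\epsilon$ (so that $\pi \in \Pi_m$), the same one-step identity $\tJ^{(i)}(\pi,x) = \int_\X \tJ^{(i)}(\pi^\epsilon, y)\, q(dy \mid x, a)$, and the same concluding case analysis on $\{y \mid \tg^m_i(y) = -\infty\}$ taken from the end of the proof of Lemma~\ref{lem-ineq}. The only difference is that where the paper states the one-step identity ``can be verified directly,'' you spell out the verification via shift-invariance of the pathwise functionals, which is a correct and welcome elaboration rather than a departure.
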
 

\begin{proof}
For $\epsilon > 0$, let 
$\pi^\epsilon : = \big(\mu^\epsilon_0(da_0 \,|\, x_0),  \, \mu^\epsilon_1(da_1 \,|\, x_1), \,  \mu^\epsilon_1(da_2 \,|\, x_2),  \, \ldots\big)$
be a Markov policy that attains $\tg^m_i$ within $\epsilon$ accuracy. For each $x \in \X$ and $a \in A(x)$, with $\mu_0 \in \Pi_s$ and $\mu_0(da_0 \,|\, x) = \delta_a$, define a policy  
$\pi : = \big(\, \mu_0(da_0 \mid x_0),  \, \mu^\epsilon_0 (da_1 \mid x_1), \, \mu^\epsilon_1(da_2 \mid x_2),  \,\mu^\epsilon_2(da_3 \mid x_3),  \, \ldots \, \big).$
Then $\pi \in \Pi_m$, so $\tJ^{(i)}(\pi, x) \geq \tg^m_i(x)$ by the definition of $\tg^m_i$. 
Since $\tJ^{(i)}(\pi,x) = \int_\X \tJ^{(i)}(\pi^\epsilon, y) \, q(dy \,|\, x, a)$ (which can be verified directly), the desired inequality (\ref{eq-thm2-ac-prf1}) follows, similarly to the proof  of Lemma~\ref{lem-ineq}.
\end{proof}

This establishes Theorem~\ref{thm-ac-const2}, as discussed earlier.

\subsection{Proofs of Lemma~\ref{lem-ac-cond-mc}, Proposition~\ref{prp-ac-const-mc} and Details for Remark~\ref{rmk-mc-related}} \label{sec-5.2}

We start by providing the proof steps needed in order to apply Markov chain theory for state spaces with countably generated $\sigma$-algebras to the case where state spaces are equipped with universal $\sigma$-algebras. This analysis involves some concepts and standard terminology for irreducible Markov chains, which are explained in Appendix~\ref{appsec-mc}.

Let $\mu \in \Pi_s$ and $\tilde \X \in \U(\X)$ be the stationary policy and the absorbing, indecomposable set in Lemma~\ref{lem-ac-cond-mc}.
Let $P_\mu$ be the transition probability function of the Markov chain $\{x_n\}$ on $(\X, \U(\X))$ induced by $\mu$.
We apply Lemma~\ref{lem-sub-sigalg} with $E_0 = \tilde \X$ and with $E_1, E_2, \ldots$ being a countable base of the topology on $\X$ (recall that $\X$ is separable and metrizable). This gives us a countably generated $\sigma$-algebra $\cE_\mu(\X) \subset \U(\X)$ such that $\B(\X) \subset \cE_\mu(\X)$, $\tilde \X \in \cE_\mu(\X)$, and $P_\mu$ restricted to $\cE_\mu(\X)$ is also a transition probability function.

Now let $\bar P_\mu$ and $\tilde P_\mu$ denote the restrictions of $P_\mu$ to $\U(\tilde \X) \times \tilde \X$ and to $\cE_\mu(\tilde \X) \times \tilde \X$, respectively, where the $\sigma$-algebra $\cE_\mu(\tilde \X) : = \{ E \mid E \in \cE_\mu(\X), E \subset \tilde \X\}$ and the $\sigma$-algebra $\U(\tilde \X)$ is likewise defined. Let $\{\bar x_n\}$ and $\{\tilde x_n\}$ be Markov chains on the state spaces $(\tilde \X, \U(\tilde \X) )$ and $(\tilde \X, \cE_\mu(\tilde \X))$ with transition probability functions $\bar P_\mu$ and $\tilde P_\mu$, respectively. We write $\Pr_x$ for the probability distribution of $\{\bar x_n\}$ or $\{\tilde x_n\}$ with initial state being $x$.

For clarity, we will now use different symbols to distinguish a measure from its completion: if $\phi$ is a measure on $\cE_\mu(\X)$, we write $\bar \phi$ for its completion or the restriction of its completion to $\U(\X)$; conversely, for a measure $\bar \phi$ on $\U(\X)$, we write $\phi$ for its restriction to the sub-$\sigma$-algebra $\cE_\mu(\X)$. In accordance with this notation, we refer to the measure $\psi$ in Lemma~\ref{lem-ac-cond-mc} as $\bar \psi$ instead.

To be concise, for two measures $\phi_1, \phi_2$, we use the shorthand notation $\phi_1 \ll \phi_2$ to mean that $\phi_1$ is absolutely continuous w.r.t.\ $\phi_2$.

\begin{lem} \label{lem-um-irr-mc}
The Markov chain $\{\bar x_n\}$ on $(\tilde \X, \U(\tilde \X))$ is $\bar \phi$-irreducible if and only if the Markov chain $\{\tilde x_n\}$ on $(\tilde \X, \cE_\mu(\tilde \X))$ is $\phi$-irreducible; and $\bar \psi$ is a maximal irreducibility measure of $\{\bar x_n\}$ if and only if $\psi$ is a maximal irreducibility measure of $\{\tilde x_n\}$.
\end{lem}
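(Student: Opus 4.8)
The plan is to reduce the whole statement to two self-contained facts and then read off both equivalences. The first fact is that the iterated kernels of the two chains agree on the small $\sigma$-algebra: for every $E \in \cE_\mu(\tilde\X)$, every $x \in \tilde\X$, and every $n \ge 0$,
$\bar P_\mu^{\,n}(E\mid x) = \tilde P_\mu^{\,n}(E\mid x)$. This holds because $\bar P_\mu$ and $\tilde P_\mu$ are both restrictions of the single kernel $P_\mu$, and by Lemma~\ref{lem-sub-sigalg} the map $x\mapsto P_\mu(E\mid x)$ is $\cE_\mu(\X)$-measurable for each $E\in\cE_\mu(\X)$. The $n=0,1$ cases are immediate, and an induction on $n$ — integrating the $(n-1)$-step identity against the common one-step kernel, using that $\tilde P_\mu^{\,n-1}(E\mid\cdot)$ is $\cE_\mu(\tilde\X)$-measurable so the integral is well defined — gives the claim. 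The second fact is a transfer lemma for $\sigma$-finite measures (all irreducibility measures are taken $\sigma$-finite, as in Appendix~\ref{appsec-mc}): if $\bar\phi_1,\bar\phi_2$ are $\sigma$-finite measures on $\U(\tilde\X)$ with $\cE_\mu(\tilde\X)$-restrictions $\phi_1,\phi_2$, then $\bar\phi_1\ll\bar\phi_2$ iff $\phi_1\ll\phi_2$, and in particular nullity transfers. The restriction direction is trivial; for the other, given $E\in\U(\tilde\X)$ with $\bar\phi_2(E)=0$, I would apply Lemma~\ref{lem-umset} in the ambient Borel space $\X$ (viewing each measure as concentrated on $\tilde\X\in\cE_\mu(\X)$, so $\tilde\X$ need not itself be Borel) to sandwich $A\subseteq E\subseteq C$ with Borel $A,C$ and $\bar\phi_2(C\setminus A)=0$; then $\bar\phi_2(C)=0$, and replacing $C$ by $C\cap\tilde\X\in\cE_\mu(\tilde\X)$ gives $\phi_2(C\cap\tilde\X)=0$, whence $\phi_1(C\cap\tilde\X)=0$ and $\bar\phi_1(E)\le\bar\phi_1(C\cap\tilde\X)=0$. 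Remark~\ref{rmk-lem-umset} guarantees completion and restriction are mutually inverse on $\sigma$-finite measures, so this is a genuine bijection.

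With these in hand the irreducibility equivalence is immediate. If $\{\tilde x_n\}$ is $\phi$-irreducible and $\bar\phi(E)>0$ for some $E\in\U(\tilde\X)$, sandwich to obtain a Borel $A\subseteq E$ (automatically $A\subseteq\tilde\X$, hence $A\in\cE_\mu(\tilde\X)$) with $\phi(A)=\bar\phi(A)=\bar\phi(E)>0$; irreducibility of $\{\tilde x_n\}$ gives $\tilde P_\mu^{\,n}(A\mid x)>0$ for some $n\ge 1$, and the kernel-agreement fact yields $\bar P_\mu^{\,n}(E\mid x)\ge\bar P_\mu^{\,n}(A\mid x)=\tilde P_\mu^{\,n}(A\mid x)>0$. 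Conversely, if $\{\bar x_n\}$ is $\bar\phi$-irreducible and $\phi(E)>0$ with $E\in\cE_\mu(\tilde\X)$, then $\bar\phi(E)>0$, so $\bar P_\mu^{\,n}(E\mid x)>0$ for some $n$, and kernel agreement gives $\tilde P_\mu^{\,n}(E\mid x)>0$. Nontriviality transfers since $\bar\phi\equiv 0$ iff $\phi\equiv 0$.

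The maximality equivalence then follows by combining the two facts. Suppose $\psi$ is a maximal irreducibility measure of $\{\tilde x_n\}$. By the irreducibility equivalence $\bar\psi$ is an irreducibility measure of $\{\bar x_n\}$, and any irreducibility measure $\bar\phi$ of $\{\bar x_n\}$ restricts to an irreducibility measure $\phi$ of $\{\tilde x_n\}$; maximality of $\psi$ gives $\phi\ll\psi$, which the transfer lemma upgrades to $\bar\phi\ll\bar\psi$, so $\bar\psi$ is maximal. The reverse direction is symmetric: any irreducibility measure $\phi$ of $\{\tilde x_n\}$ has completion $\bar\phi$ an irreducibility measure of $\{\bar x_n\}$, maximality of $\bar\psi$ gives $\bar\phi\ll\bar\psi$, and restriction gives $\phi\ll\psi$.

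I expect the main obstacle to be the careful bookkeeping behind the kernel-agreement fact: making the induction rigorous needs the measurability conclusion of Lemma~\ref{lem-sub-sigalg} invoked at every step so that the iterated integrals against $P_\mu$ are well defined on $\cE_\mu(\tilde\X)$ and so that restriction to $\tilde\X$ (which is absorbing) is legitimate. The secondary technical point is that $\tilde\X$ need not be Borel; this is handled uniformly by carrying out all sandwiching in the ambient Borel space $\X$ and intersecting the sandwiching sets with $\tilde\X\in\cE_\mu(\X)$, and by restricting attention throughout to $\sigma$-finite irreducibility measures so that Lemma~\ref{lem-umset} and Remark~\ref{rmk-lem-umset} apply.
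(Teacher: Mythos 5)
Your proposal is correct and takes essentially the same route as the paper's proof: the paper likewise handles the trivial restriction direction directly, obtains the nontrivial directions (both for irreducibility and for the absolute-continuity transfer behind maximality) by sandwiching a universally measurable set between Borel sets via Lemma~\ref{lem-umset}, and uses the identification of the two chains on the common sub-$\sigma$-algebra---which the paper leaves implicit by writing $\Pr_x$ for both chains, and which you make explicit through the $n$-step kernel-agreement induction. The extra bookkeeping you supply (the induction using Lemma~\ref{lem-sub-sigalg}, and carrying out the sandwiching in the ambient Borel space since $\tilde\X$ need not be Borel) fills in details the paper takes for granted rather than constituting a different argument.
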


\begin{proof}
Clearly $\{\tilde x_n\}$ is $\phi$-irreducible if $\{\bar x_n\}$ is $\bar \phi$-irreducible. 
Conversely, suppose that $\{\tilde x_n\}$ is $\phi$-irreducible. 
For any set $B \in \U(\tilde \X)$ with $\bar \phi(B) > 0$, by Lemma~\ref{lem-umset}, there is a Borel set $\hat B \subset B$ with $\phi(\hat B) = \bar \phi(B) > 0$. Then by the irreducibility of $\{\tilde x_n\}$, $\Pr_x \{ \tilde x_n \in \hat B \ \text{for some} \ n \geq 1 \} > 0$ for all $x \in \tilde \X$. Since $B \supset \hat B$, this implies $\Pr_x \{ \bar x_n \in B \ \text{for some} \ n \geq 1 \} > 0$ for all $x \in \tilde \X$ and proves that $\{\bar x_n\}$ is $\bar \phi$-irreducible.

If $\bar \psi$ is a maximal irreducibility measure of $\{\bar x_n\}$ and $\phi$ is an irreducibility measure of $\{\tilde x_n\}$, then by the first part of the proof, $\bar \phi \ll \bar \psi$, which implies $\phi \ll \psi$, so $\psi$ is a maximal irreducibility measure of $\{\tilde x_n\}$. Conversely, if $\psi$ is a maximal irreducibility measure of $\{\tilde x_n\}$ and $\bar \phi$ is an irreducibility measure of $\{\bar x_n\}$, then by the first part of the proof, $\bar \psi$ is an irreducibility measure of $\{\bar x_n\}$ and $\phi$ is an irreducibility measure of $\{\tilde x_n\}$. The latter implies $\phi \ll \psi$, since $\psi$ is maximal for $\{\tilde x_n\}$. Then by Lemma~\ref{lem-umset}, we have $\bar \phi \ll \bar \psi$, so $\bar \psi$ is a maximal irreducibility measure of $\{ \bar x_n\}$.
\end{proof}

Henceforth, for irreducible Markov chains, the symbol $\psi$ or $\bar \psi$ will always stand for a maximal irreducibility measure.

The inequality (\ref{eq-sml-fn}) in the following lemma is called the \emph{minorization condition}. When it is satisfied, the function $s(\cdot)$ involved is called a \emph{small function}, and if $s(\cdot)$ is the indicator function for a set $C$, $C$ is called a \emph{small set} (cf.\ \cite[Def.~2.3]{Num84}). 
A large part of the theory of irreducible Markov chains requires the existence of a small function, which is ensured in the case of irreducible Markov chains on state spaces with countably generated $\sigma$-algebras. 
The theory becomes valid for irreducible Markov chains on $(\X, \U(\X))$ as well, if small functions exist in these Markov chains as well, which is shown to be true by the lemma below.

\begin{lem}[existence of a small function] \label{lem-small-fn}
Suppose that $\{\bar x_n\}$ is $\bar \psi$-irreducible. Then there exist a universally measurable function $s : \tilde \X \to [0, + \infty)$ with $\int \! s \, d \bar \psi > 0$, a nontrivial  $\sigma$-finite measure $\bar \nu$ on $\U(\tilde \X)$, a constant $\beta > 0$, and an integer $m_0 \geq 1$ such that
\begin{equation} \label{eq-sml-fn}
{\bar P}^{m_0}_\mu(B \mid x) \geq \beta s(x) \, \bar \nu(B), \qquad \forall \, x \in \tilde \X, \ B \in \U(\tilde \X).
\end{equation}
\end{lem}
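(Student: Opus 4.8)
The plan is to obtain the minorization first on the countably generated state space $(\tilde\X, \cE_\mu(\tilde\X))$, where the classical theory of $\psi$-irreducible chains applies, and then transfer it to $(\tilde\X, \U(\tilde\X))$ via the completion results of Section~\ref{sec-proof-review}. By Lemma~\ref{lem-um-irr-mc}, the $\bar\psi$-irreducibility of $\{\bar x_n\}$ is equivalent to the $\psi$-irreducibility of $\{\tilde x_n\}$, and since $\cE_\mu(\tilde\X)$ is countably generated, the standard existence theorem for small sets of $\psi$-irreducible chains \cite{Num84} provides an integer $m_0 \geq 1$, a constant $\beta > 0$, a set $C \in \cE_\mu(\tilde\X)$ with $\psi(C) > 0$, and a nontrivial finite measure $\nu$ on $\cE_\mu(\tilde\X)$ such that $\tilde P_\mu^{m_0}(E \mid x) \geq \beta\,\ind_C(x)\,\nu(E)$ for all $x \in \tilde\X$ and $E \in \cE_\mu(\tilde\X)$. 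I would then take $s := \ind_C$, which is $\cE_\mu(\tilde\X)$-measurable, hence universally measurable, and satisfies $\int s\, d\bar\psi = \psi(C) > 0$ because $\bar\psi$ restricts to $\psi$ on $\cE_\mu(\tilde\X)$.

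Next I would record that the $m_0$-step kernels of the two chains are consistent, namely $\bar P_\mu^{m_0}(E\mid x) = \tilde P_\mu^{m_0}(E \mid x)$ for every $E \in \cE_\mu(\tilde\X)$ and $x \in \tilde\X$. This follows by induction on the number of steps. The base case is the defining relation that $\tilde P_\mu$ is the restriction of $\bar P_\mu$ to $\cE_\mu(\tilde\X)$. For the inductive step, the integrand $\bar P_\mu^{m}(E\mid\cdot) = \tilde P_\mu^{m}(E\mid\cdot)$ is $\cE_\mu(\tilde\X)$-measurable, so integrating it against $\bar P_\mu(dy\mid x)$ gives the same value as integrating it against $\tilde P_\mu(dy\mid x)$, since a measure and its completion assign the same integral to any function measurable w.r.t.\ the smaller $\sigma$-algebra (Remark~\ref{rmk-lem-umset}). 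I would then let $\bar\nu$ be the extension of $\nu$ to $\U(\tilde\X)$ given by completion (again Remark~\ref{rmk-lem-umset}); it is nontrivial and $\sigma$-finite and agrees with $\nu$ on $\cE_\mu(\tilde\X)$.

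The transfer of the minorization to an arbitrary $B \in \U(\tilde\X)$ is the crux. Fixing such a $B$, I would apply Lemma~\ref{lem-umset} to the Borel restriction of $\nu$ to produce a Borel set $A \subset B$ with $\nu(A) = \bar\nu(B)$. Since $A \in \B(\tilde\X) \subset \cE_\mu(\tilde\X)$ and $A \subset B$, monotonicity of the probability measure $\bar P_\mu^{m_0}(\cdot\mid x)$ together with the consistency just established yields
\[
\bar P_\mu^{m_0}(B\mid x) \;\geq\; \bar P_\mu^{m_0}(A\mid x) \;=\; \tilde P_\mu^{m_0}(A\mid x) \;\geq\; \beta\, s(x)\, \nu(A) \;=\; \beta\, s(x)\, \bar\nu(B),
\]
which is precisely the bound (\ref{eq-sml-fn}). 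The main obstacle I anticipate is exactly this uniform-in-$x$ lifting: a naive approximation of $B$ by $\cE_\mu$-sets would introduce exceptional null sets depending on the initial state $x$, destroying the uniformity needed in (\ref{eq-sml-fn}). The device that circumvents this is to approximate $B$ from \emph{below} by a single Borel set $A$ of full $\bar\nu$-measure, so that monotonicity of $\bar P_\mu^{m_0}(\cdot\mid x)$ forces the inequality for every $x$ simultaneously; the only property of $\nu$ this requires is $\sigma$-finiteness of its Borel restriction, which holds because $\nu$ may be taken finite.
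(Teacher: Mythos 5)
Your proposal is correct and follows essentially the same route as the paper's proof: reduce to the chain $\{\tilde x_n\}$ on the countably generated $\sigma$-algebra via Lemma~\ref{lem-um-irr-mc}, invoke the classical minorization theorem of \cite{Num84} there, and lift the bound to an arbitrary $B \in \U(\tilde \X)$ by choosing, via Lemma~\ref{lem-umset}, a single Borel set $A \subset B$ with $\nu(A) = \bar\nu(B)$ and using monotonicity of $\bar P_\mu^{m_0}(\cdot \mid x)$ uniformly in $x$. The only cosmetic differences are that the paper uses the small-function form of \cite[Thm.~2.1]{Num84} directly rather than a small set with $s = \ind_C$, and that you spell out by induction the $m_0$-step kernel consistency $\bar P_\mu^{m_0} = \tilde P_\mu^{m_0}$ on $\cE_\mu(\tilde\X)$, which the paper uses implicitly.
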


\begin{proof}
By Lemma~\ref{lem-um-irr-mc}, the Markov chain $\{\tilde x_n\}$ on $(\tilde \X, \cE_\mu(\tilde \X))$ is $\psi$-irreducible.
Since $\cE_\mu(\tilde \X)$ is countably generated, by \cite[Thm.~2.1]{Num84}, 
there exist some integer $m_0 \geq 1$, constant $\beta > 0$, real-valued nonnegative $\cE_\mu(\tilde \X)$-measurable function $s(\cdot)$ with $\int \! s \, d \psi > 0$, and nontrivial $\sigma$-finite measure $\nu$ on $\cE_\mu(\tilde \X)$ such that
\begin{equation} \label{eq-prf-sml-fn}
  {\tilde P}^{m_0}_\mu(E \mid x) \geq \beta s(x) \, \nu(E), \qquad \forall \, x \in \tilde \X, \ E \in \cE_\mu(\tilde \X).
\end{equation}
Then $\int \! s \, d \bar \psi = \int \! s \, d \psi > 0$. For any $B \in \U(\tilde \X)$, by Lemma~\ref{lem-umset}, there exists Borel set $\hat B$ with $\hat B \subset B$ and $\nu(\hat B) = \bar \nu(B)$; therefore,  
for all $x \in \tilde \X$,
$$ {\bar P}_\mu^{m_0}( B \mid x) \geq {\bar P}_\mu^{m_0}(\hat B \mid x) =  {\tilde P}^{m_0}_\mu(\hat B \mid x) \geq \beta s(x) \, \nu(\hat B) = \beta s(x) \, \bar \nu(B),$$
where we used (\ref{eq-prf-sml-fn}) in the second inequality.
This proves (\ref{eq-sml-fn}).
\end{proof}

With the preceding lemma, we can now apply the theorems in the book \cite{Num84} for irreducible Markov chains to the Markov chain $\{\bar x_n\}$ on $(\tilde \X, \U(\tilde \X))$, alleviating the necessity for having countably generated $\sigma$-algebras on $\tilde \X$. 

\begin{proof}[Proof of Lemma~\ref{lem-ac-cond-mc}]
By assumption the Markov chain $\{\bar x_n\}$ is $\bar \psi$-irreducible and recurrent. Then by Lemma~\ref{lem-small-fn} and the preceding discussion, the results of \cite[Thm.~3.7 and Prop.~3.13]{Num84} are applicable to $\{\bar x_n\}$; they show that $\{\bar x_n\}$ has a unique maximal Harris set $\bar H$ and $\bar \psi(\tilde \X \setminus \bar H) = 0$.
Since $\{\bar x_n\}$ is Harris recurrent on $\bar H$ (cf.\ Appendix~\ref{appsec-mc} for the definition of a Harris set), 
it follows from the definition of Harris recurrence that condition (i) of Theorem~\ref{thm-ac-const} is satisfied if we let $\lambda = \bar \psi$ and $\hat \X = \bar H$ and take the stationary policy $\mu$ to be the required policy $\pi_B$ for every Borel set $B \subset \bar H$ with $\bar \psi(B) > 0$ and every $x \in \bar H$.
\end{proof}

\begin{proof}[Proof of Prop.~\ref{prp-ac-const-mc}]
By Lemma~\ref{lem-ac-cond-mc} and the assumptions of this proposition, condition (i) of Theorem~\ref{thm-ac-const} is satisfied for $\lambda = \bar \psi$ and $\hat \X = \bar H$. For Theorems~\ref{thm-ac-const}-\ref{thm-ac-const2} to hold with the replacements of some of their conditions as stated in this proposition, the only condition that remains to be verified is that $g^*, \tg^\star \not\equiv - \infty$ on $\bar H$. 
We verify this for $g^*$; the proof for $\tg^\star$ is similar.

Since $g^* \not\equiv - \infty$ on $\tilde \X$ by assumption, we have $E \cap \tilde \X \not= \varnothing$ for $E : = \{ x \in \X \mid g^*(x) > - \infty \}$.
By (\ref{eq-ac-ineq}), $g^*(x) \leq \int_\X g^*(y) P_\mu(dy \,|\, x)$ for all $x \in \X$, so $E$ is closed (i.e., $P_\mu(E^c \,|\, x) = 0$ for all $x \in E$) and hence its nonempty intersection with the closed set $\tilde \X$, $E  \cap  \tilde \X$, is also closed.
Since $\tilde \X$ is indecomposable, the two closed subsets of $\tilde \X$, $E \cap \tilde \X$ and $\bar H$, cannot be disjoint. Therefore 
$g^* \not\equiv - \infty$ on $\bar H$.
\end{proof}

\begin{proof}[Details for Remark~\ref{rmk-mc-related}(b)]
We explain the details for the comments about the case $g^*$ in this remark; the case $\tg^\star$ is similar.
Under the assumption that $g^*$ is bounded above and $g^* \not\equiv - \infty$ on $\tilde \X$, there is some constant $0 \leq \delta < \infty$ such that the function $f(x) : = \delta - g^*(x)$, $x \in \tilde \X$, is nonnegative and not everywhere infinite. Since $g^*$ is lower semianalytic by Theorem~\ref{thm-ac-basic}(i) and the set $\tilde \X \in \cE_\mu(\X)$ is universally measurable, $f$ is a measurable function on $(\tilde \X, \U(\tilde \X))$. Then, by Assumption~\ref{cond-ac-n1} and Lemma~\ref{lem-ineq}, $f$ is superharmonic for $\bar P_\mu$ (i.e., in operator notation, $f \geq \bar P_\mu f$).
Denote by $f_{\bar H}$ the restriction of $f$ to the maximal Harris set $\bar H$. The preceding proof of Prop.~\ref{prp-ac-const-mc} shows that $f_{\bar H} \not\equiv + \infty$ on $\bar H$, and therefore, $f_{\bar H}$ is a superharmonic function for the Harris recurrent Markov chain $\{\bar x_n\}$ on $\bar H$.
Then by a theorem on superharmonic functions for Harris recurrent Markov chains \cite[Thm.~3.8(i)]{Num84}, there is some constant $0 \leq t < \infty$ such that $f_{\bar H} = t$ $\bar \psi$-a.e.\ and $f_{\bar H} \geq t$ everywhere. 
Since $f = \delta - g^*$, for the finite constant $\ell : = \delta - t$, we have $g^* = \ell$ $\bar \psi$-a.e.\ and $g^* \leq \ell$ on $\bar H$.

For the second part of Remark~\ref{rmk-mc-related}(b), which involves a bounded $g^*$, the assumptions on the stationary policy $\mu$ and $g^*$ imply that for some constant $\delta > 0$, $f: = g^* + \delta$ is a bounded nonnegative function and 
harmonic for $P_\mu$ (i.e., in operator notation, $f = P_\mu f$). Since by assumption the Markov chain $\{x_n\}$ induced by $\mu$ is Harris recurrent, $f$ must be a constant \cite[Thm.~3.8(i)]{Num84} and hence $g^*$ is also a constant.
\end{proof}

%% appendix
\begin{appendices}

\section{Terminology for Markov Chains} \label{appsec-mc}
We briefly explain the concepts of $\psi$-irreducible, recurrent, and Harris recurrent Markov chains in this appendix. We follow the book \cite[Chaps.~2 and~3]{Num84}; for some general references on Markov chains, we refer the reader to this book and the book \cite{MeT09}. 
The definitions and results we mention below do not require the state space to have a countably generated $\sigma$-algebra. 

Let $(X, \Sigma)$ be a measure space. Consider a Markov chain $\{x_n\}$ on the state space $(X, \Sigma)$ with a probability transition function $P$. 
We call a nonempty set $E \in \Sigma$ \emph{closed} or \emph{absorbing} if $P(E^c \,|\, x) = 0$ for all $x \in E$.
The Markov chain $\{x_n\}$ restricted to such a set $E$ is a Markov chain with $(E, \Sigma(E))$ as its state space, where $\Sigma(E) = \{ B \mid B \in \Sigma, B \subset E\}$. A set $E \in \Sigma$ is called \emph{indecomposable} if $E$ does not contain two disjoint closed subsets.

Let $\phi$ be a nontrivial $\sigma$-finite measure on $(X, \Sigma)$. 
A Markov chain $\{x_n\}$ is called \emph{$\phi$-irreducible} and the measure $\phi$ is called an \emph{irreducibility measure} of $\{x_n\}$, if for every $E \in \Sigma$ with $\phi(E) > 0$,
\begin{equation} 
  \Pr_x \{ x_n \in E \ \text{for some} \ n \geq 1 \} > 0 \qquad \forall \, x \in X. \notag
\end{equation} 
Following \cite{Num84}, we call a Markov chain an \emph{irreducible Markov chain}, if it is $\phi$-irreducible for some $\phi$. 
Every irreducible Markov chain has a \emph{maximal irreducibility measure}, commonly denoted by $\psi$, in the sense that every irreducibility measure of the Markov chain is absolutely continuous w.r.t.\ $\psi$ (see \cite[Prop.~2.4]{Num84} or \cite[Prop.~4.2.2]{MeT09}). 
An irreducible Markov chain $\{x_n\}$ is called \emph{recurrent} if for all $E \in \Sigma$ with $\psi(E) > 0$,
$$ \Pr_x \{ x_n \in E \ \text{i.o.} \} > 0 \ \ \ \forall \, x \in X  \quad \text{and} \quad \Pr_x \{ x_n \in E \ \text{i.o.} \} = 1 \ \ \ \text{for $\psi$-almost all $x$} $$
(where ``i.o.''~stands for ``infinitely often'');
and \emph{Harris recurrent} if for all $E \in \Sigma$ with $\psi(E) > 0$,
$$ \Pr_x \{ x_n \in E \ \text{i.o.} \} = 1  \quad \forall \, x \in X.$$
An absorbing set $H \in \Sigma$ is called a \emph{Harris set} for $\{x_n\}$, if, restricted to $H$, the Markov chain $\{x_n\}$ is Harris recurrent.

A \emph{superharmonic} (resp.\ \emph{harmonic}) function for $P$ is a nonnegative measurable function $f$ on $(X, \Sigma)$ that is not identically infinite and satisfies $f \geq P f$ (resp.\ $f = P f$), in operator notation. If $\{x_n\}$ is Harris recurrent and $f$ is superharmonic, then for some finite constant $\ell \geq 0$, $f = \ell$ $\psi$-a.e.\ and $f \geq \ell$ everywhere; in particular, every bounded harmonic function is a constant \cite[Thm.~3.8]{Num84}.%\vspace*{-0.15cm} 
\end{appendices}

\section*{Acknowledgements}
The author would like to thank Professor Eugene Feinberg, who pointed her to several prior results on average-cost MDPs and provided helpful comments on an early draft of this paper, and Dr.~Martha Steenstrup, who read parts of this paper and gave her advice on improving the presentation.

%\vspace*{-0.25cm} 
\addcontentsline{toc}{section}{References} 
\bibliographystyle{apa} 
\let\oldbibliography\thebibliography
\renewcommand{\thebibliography}[1]{%
  \oldbibliography{#1}%
  \setlength{\itemsep}{0pt}%
}
{\fontsize{9}{11} \selectfont
\bibliography{ac_BorelDP_bib_new2}}

\end{document}